\newtheorem{proposition}{Proposition}[section]
\newtheorem{theorem}[proposition]{Theorem}
\newtheorem{lemma}[proposition]{Lemma}
\newtheorem{remark}[proposition]{Remark}
\newtheorem{example}[proposition]{Example}
\newtheorem{assumption}[proposition]{Assumption}
\newtheorem{procedure}[proposition]{Procedure}
\newenvironment{proof}{{\noindent \em Proof.}}{\hfill $\fbox{}$ \vspace*{5mm}}
\numberwithin{equation}{section}
\newcommand{\ba}{{\bf a}}
\newcommand{\bz}{{\bf z}}
\newcommand{\la}{\lambda}
\newcommand{\La}{\Lambda}
\newcommand{\ca}{\mathcal{A}}
\newcommand{\ck}{\mathcal{K}}
\newcommand{\cm}{\mathcal{M}}
\newcommand{\co}{\mathcal{O}}
\newcommand{\ce}{\mathcal{E}}
\newcommand{\cx}{\mathcal{X}}
\newcommand{\cy}{\mathcal{Y}}
\newcommand{\rd}{{\mathrm {D}}}
\newcommand{\diag}{{\rm diag}}
\newcommand{\dist}{{\rm dist}}
\newcommand{\grad}{{\rm grad\;}}
\newcommand{\ve}{{\rm vec}}
\newcommand{\veh}{{\rm vech}}
\newcommand{\tr}{{\rm tr}}
\newcommand{\bd}{\boldsymbol}
\newcommand{\R}{{\mathbb R}}
\newcommand{\Rn}{{\mathbb R}^n}
\newcommand{\Rnn}{{\mathbb R}^{n\times n}}
\newcommand{\SRn}{\mathbb{SR}^{n\times n}}
\newcommand{\BE}{\begin{equation}}
\newcommand{\EE}{\end{equation}}
\DeclareMathOperator*{\argmin}{argmin}
\newcommand{\normmm}[1]{{\vert\kern-0.25ex \vert\kern-0.25ex \vert #1
    \vert\kern-0.25ex \vert\kern-0.25ex\vert}}
\begin{document}

\title{A Riemannian  Inexact Newton Dogleg Method for Constructing  a Symmetric Nonnegative Matrix with Prescribed Spectrum}
\author{Zhi Zhao\thanks{Department of Mathematics, School of Sciences, Hangzhou Dianzi University, Hangzhou 310018,
People's Republic of China (zhaozhi231@163.com). The research of this author was supported by the National Natural Science
Foundation of China (No. 11601112) and the Zhejiang Provincial Natural Science Foundation of China (No. LY21A010010).}
\and Teng-Teng Yao\thanks{Department of Mathematics, School of Sciences, Zhejiang University of Science and Technology, Hangzhou 310023, People's Republic of China (yaotengteng718@163.com). The research of this author was supported by
the National Natural Science Foundation of China (No. 11701514) and the Zhejiang Provincial Natural Science Foundation
of China (No. LY21A010004).}
\and Zheng-Jian Bai\thanks{Corresponding author. School of Mathematical Sciences and Fujian Provincial Key Laboratory on Mathematical Modeling \& High Performance Scientific Computing,  Xiamen University, Xiamen 361005, People's Republic of China (zjbai@xmu.edu.cn). The research of this author was partially supported by the National Natural Science Foundation of China (No. 11671337) and the Natural Science Foundation of Fujian Province of China (No. 2021J01033).}
\and Xiao-Qing Jin\thanks{Department of Mathematics, University of Macau, Macao, People's Republic of China (xqjin@umac.edu.mo).
The research of this author was supported by the research grants MYRG2019-00042-FST and CPG2021-00035-FST
from University of Macau and 0014/2019/A from FDCT.}
 }
\maketitle

\begin{abstract}
This paper is concerned with the inverse problem of constructing a symmetric nonnegative matrix from realizable spectrum.
We reformulate the inverse problem as an underdetermined nonlinear matrix equation
over a Riemannian product manifold.  To solve it, we develop a Riemannian underdetermined inexact Newton dogleg method for solving a general underdetermined nonlinear equation defined between Riemannian manifolds and Euclidean spaces. The global and quadratic convergence of the proposed  method is established under some mild assumptions. Then we solve the inverse problem by applying the proposed method to its equivalent nonlinear matrix equation and a preconditioner for the perturbed normal Riemannian Newton equation is also constructed. Numerical tests show the efficiency of the proposed  method for solving the inverse problem.
\end{abstract}

{\bf Keywords.}  Symmetric nonnegative inverse eigenvalue problem, underdetermined equation,
Riemannian Newton dogleg method, preconditioner.

\vspace{3mm}
{\bf AMS subject classifications.} 15A18, 65F08, 65F18, 65F15.

\section{Introduction}

An $n$-by-$n$ matrix $A$ is nonnegative if all its entries are all nonnegative, i.e., $(A)_{ij}\ge 0$ for all $i,j=1,\ldots,n$, where $(A)_{ij}$ means the $(i,j)$th entry of $A$. Nonnegative matrices arise in a wide variety  of applications such as finite Markov chains, probabilistic algorithms, graph theory, the linear complementarity problems, matrix scaling, and input-output analysis in economics,    etc (see for instance \cite{BR97,BP79,M88,SA06}). The nonnegative inverse eigenvalue problem (NIEP) is a structured inverse eigenvalue problem \cite{CG02,CG05,X98}, which aims to determine whether a given self-conjugate set of complex numbers is the spectrum of a nonnegative matrix. Various theoretical results have been obtained on the existence theory of the NIEP
in the literature \cite{ELN04,JMPP17,JP16,K51,LS06,LL78,O83,R96,S06-2,S13}.

This paper is concerned with the symmetric NIEP of constructing  a symmetric nonnegative matrix from a realizable spectrum
numerically. Recall that a list of complex numbers which occurs as the spectrum of some nonnegative matrix
is called a realizable spectrum \cite{JMPP17}. The inverse eigenvalue problem of reconstruction of
a real symmetric nonnegative matrix from a prescribed realizable spectrum can be stated as follows:

{\bf SNIEP.} {\em Given a realizable list of $n$ real numbers $\{\lambda_1, \lambda_2, \ldots , \lambda_n\}$,
find an $n$-by-$n$ real symmetric nonnegative matrix $A$ such that its eigenvalues are
$\lambda_1, \lambda_2, \ldots , \lambda_n$.}

There exist some  numerical methods for solving the NIEP including constructive methods \cite{JP16,Papa16,S83},
recursive methods \cite{ES16,L13}, isospectral gradient flow approaches \cite{CL11,CDS04,CD91,CG98},
alternating projection algorithm \cite{O06} and Riemannian inexact Newton method \cite{ZBJ18}.
Constructive methods and recursive methods have special requirements on the realizable spectrum, and thus these methods are restricted to solving the NIEPs with additional constraints on the realizable spectrum. Isospectral gradient approaches and alternating projection algorithms can be used in the solution of medium-scale problems. The Riemannian inexact Newton method
can be applied to solve large-scale problems, which depends heavily on how to solve the Riemannian Newton equation efficiently. This motivates us to find an effective preconditioner to improve the efficiency of the proposed Riemannian Newton method for solving large-scale SNIEPs.

In the past few decades, various numerical methods have been proposed for finding zeros of
underdetermined nonlinear maps defined between Euclidean spaces (see for instance \cite{BLSYG17,CY94,ESV11,ESV12,FKM05,Martin91,S06,WW90}).
However, to our knowledge, except for the Riemannian inexact Newton method proposed in \cite{ZBJ18}, there exist few other effective numerical algorithms in the literature for finding the zeros of  general underdetermined maps between a Riemannian manifold and a Euclidean space.

In this paper,  based on the symmetric Schur decomposition,  we reformulate the SNIEP as a problem of finding  a solution of an underdetermined nonlinear matrix equation over a product Riemannian manifold. To solve it, we first develop a Riemannian inexact Newton dogleg method for solving a general underdetermined nonlinear equation over a Riemannian manifold. This is motivated by the three papers due to  Pawlowski et al. \cite{PS08}, Simons \cite{S06},  and Zhao et al. \cite{ZBJ18}.
In \cite{S06}, Simons provided an exact trust region method (i.e., underdetermined Newton dogleg
method) for finding zeros of underdetermined nonlinear maps defined between Euclidean spaces.
In \cite{PS08}, Pawlowski et al.  presented inexact Newton dogleg methods for solving nonlinear equations defined on a Euclidean space. In \cite{ZBJ18}, Zhao et al. gave a Riemannian inexact Newton method for constructing a nonnegative matrix with prescribed realizable spectrum. The global and quadratic convergence of the proposed method is established under some mild assumptions.  Then we find a solution to the SNIEP by applying the proposed method to its corresponding underdetermined nonlinear matrix equation over a product Riemannian manifold.
To further improve the efficiency, by exploring the structure property of the SNIEP, a preconditioning technique is presented,
which can also be combined with the Riemannian inexact Newton method in \cite{ZBJ18} for solving the SNIEP.
Finally, we report some numerical experiments to demonstrate that the proposed method with
the constructed preconditioner can solve the  SNIEP efficiently.

Throughout this paper, we use the following notation. The symbols $A^T$ and $A^H$ denote the transpose and conjugate transpose of a matrix $A$, respectively. $I_n$ denotes  the identity matrix of order $n$. Let $\Rnn$ and $\SRn$ be the set of all $n$-by-$n$  real matrices and the set of all $n$-by-$n$ real symmetric matrices, respectively. Let $\Rnn_{+}$ and $\SRn_+$ denote the nonnegative orthants of $\Rnn$ and $\SRn$, respectively. $\|\cdot \|_F$ stands for  the matrix Frobenius norm.  Denote by $A \odot B$ and $[A,B]:=AB-BA$  the Hadamard product and Lie Bracket of two $n$-by-$n$ matrices $A$ and $B$, respectively.  Denote by $\tr(A)$ the sum of the diagonal entries of a square matrix $A$. $\diag(\ba)$ is a diagonal matrix whose $i$th diagonal element is the $i$th component of a vector $\ba$. For a matrix $A\in \Rnn$, let $\ve(A)$ be the vectorization of $A$, i.e., a column vector obtained by stacking the columns of $A$ on top of one another, and define $\veh(A) \in \mathbb{R}^{n(n+1)/2}$ by
\[
\big(\veh(A)\big)_{{\frac{(j-1)j}{2}+i}} := (A)_{ij}, \quad 1\le i\le j\le n.
\]
Let $\cx$ and $\cy$ be two finite-dimensional vector spaces  equipped with a scalar inner product $\langle\cdot,\cdot\rangle$ and its induced norm $\|\cdot\|$. Let $\ca:\cx\to \cy$ be a linear operator such that $\ca [x]\in\cy$ for all $x\in\cx$,  and the adjoint of $\ca$ is denoted by $\ca^*$.  Define the operator norm of $\ca$ by $\normmm{\ca}:=\sup\{ \| \ca [x]\| \ | \ x\in\cx\mbox{ with } \|x\|=1\}.$

The rest of this paper is organized as follows. In Section \ref{sec2} the SNIEP is written as an underdetermined nonlinear matrix equation over a Riemannian product manifold.  In Section \ref{sec3} we develop a Riemannian inexact Newton dogleg method for solving a general underdetermined nonlinear equation over a Riemannian manifold. The global and  quadratic convergence of the proposed method is established under some mild assumptions.
In Section \ref{sec4} we apply the Riemannian inexact Newton dogleg method developed in Section \ref{sec3} to the  SNIEP, where an effective preconditioner is also provided. Finally, some numerical experiments and concluding remarks
are given in Sections \ref{sec5} and \ref{sec6}, respectively.
\section{Reformulation}  \label{sec2}
In this section, we reformulate the SNIEP as an equivalent problem of solving a specific underdetermined nonlinear matrix equation over a Riemannian product manifold. Let $\Lambda$ be the diagonal matrix defined by
\[
\Lambda := \diag (\bd\la)\in \Rnn,\quad \bd\la:=(\lambda_{1}, \lambda_{2}, \ldots, \lambda_n)^T \in \Rn.
\]
Define the orthogonal group $\co(n)$ by
\[
\co(n): =  \big \{ Q \in \mathbb{R}^{n\times n} \ | \ Q^TQ = I_{n} \big \}.
\]
The set $\SRn_+$ can be represented by
\[
\SRn_+=\{S \odot S\in\Rnn \; | \; S\in\SRn\}.
\]
Based on the symmetric Schur decomposition \cite{GV13}, the smooth manifold of isospectral matrices for $\SRn$ is given by
\[
\cm(\La):=\{A=Q\Lambda Q^T\in\SRn \; | \; Q\in\co(n)\}.
\]
Hence, the SNIEP has a solution if and only if $\cm(\La)\cap \SRn_+\neq\emptyset$.

Suppose the SNIEP has at least one solution. Then the SNIEP is reduced to the following constrained matrix equation:
\BE\label{RNEWEQ1}
\begin{array}{c}
\Phi(S,Q) := S \odot S - Q\Lambda Q^T = \mathbf{0}_{n\times n} ,\quad   \mbox {s.t. $(S,Q) \in \SRn \times \co(n)$},
\end{array}
\EE
where $\mathbf{0}_{n\times n}$ means the zero matrix of order $n$.

We note that if $(\overline{S}, \overline{Q})\in\SRn \times \co(n)$ is a solution to  (\ref{RNEWEQ1}), then $\overline{C}:=\overline{S}\odot \overline{S}$ is a solution to the SNIEP. To avoid confusion, we refer to (\ref{RNEWEQ1}) as the SNIEP.

We point out that $\Phi:\SRn \times \co(n)\to\SRn$ is  a smooth mapping from the product manifold $\SRn \times \co(n)$ to the Euclidean space $\SRn$. It is obvious that the dimension of $\SRn \times \co(n)$ is larger than  the dimension of $\SRn$ for  $n\geq 2$. This shows that the matrix equation $\Phi(S,Q) = \mathbf{0}_{n\times n}$ defined by (\ref{RNEWEQ1}) is underdetermined for $n\geq 2$.
\section{General underdetermined nonlinear equation over Riemannian manifold}\label{sec3}

In this section, we consider a general  underdetermined nonlinear equation, where the nonlinear map is a differentiable mapping  between a Riemannian manifold and a Euclidean space. Then we introduce  a Riemannian  inexact Newton dogleg method for solving the underdetermined nonlinear equation.
The global and  quadratic convergence is also established under some mild assumptions.
\subsection{Problem statement}
Let $\cm$  and $\ce$ be respectively a Riemannian manifold and  a Euclidean space with ${\rm dim}(\cm) > {\rm dim}(\ce)$.
Let $F:\cm\to\ce$ be a differentiable nonlinear mapping between $\cm$ and $\ce$. In this subsection, we focus on the following  underdetermined nonlinear equation:
\BE\label{eq:nleq}
F(x) = 0, \qquad  \text{subject to (s.t.)}  \;x \in \cm,
\EE
where $0$ is the zero vector of $\ce$.

For simplicity, let $\langle \cdot, \cdot \rangle$ denote the Riemannian metric on $\cm$ and
the inner product on $\ce$ with its induced norm $\|\cdot\|$. Denote by  $T_x\cm$  the tangent space of $\cm$ at a point $x\in\cm$. Let $\rd F(x):T_x\cm\to T_{F(x)}\ce\simeq\ce$ be the differential (derivative) of $F$ at $x\in\cm$ \cite[p.38]{AMS08}, where ``$\simeq$" means the identification of two sets. Then a point $x\in \cm$ is called a {\em stationary point} of $F$ if
\[
\|F(x)\| \leq \|F(x) + \rd F(x)[\Delta x] \|,\quad \forall \Delta x\in T_x\cm.
\]
Define the  merit function $f:\cm\to\R$ by
\BE\label{fF}
f(x):=\frac{1}{2}\|F(x)\|^{2},\quad \forall x\in \cm.
\EE
By hypothesis, $F$ is differentiable. Then the function $f:\cm \to \R$ is also differentiable. As in \cite[p. 46]{AMS08}, the Riemannian gradient $\grad f(x)$ of $f$ at $x\in\cm$ is defined as the unique element in $T_x\cm$  such that
\[
\langle \grad f(x), \xi_x\rangle=\rd f(x)[\xi_x],\quad \forall \xi_x\in T_x\cm.
\]
If follows from \eqref{fF} that the Riemannian gradient of $f$ at $x\in \cm$ is give by \cite[p.185]{AMS08}:
\BE\label{GRAVEC}
\grad f(x) = (\rd F(x))^*[F(x)],
\EE
where $(\rd F(x))^*:T_{F(x)}\ce\to T_{x}\cm$ is the adjoint operator of $\rd F(x)$.
Specially, $x\in\cm$ is a stationary point of $F$ if and only if $x\in \cm$ is a stationary point of $f$, i.e.,
$\grad f(x) = (\rd F(x))^*[F(x)] = 0_x$, where $0_x$ is the zero tangent vector of $T_x\cm$.
\subsection{Riemannian inexact Newton dogleg method}
In the following, we develop a Riemannian trust region method for solving (\ref{eq:nleq}). Let $R$ be a retraction on $\cm$ \cite[p.55]{AMS08}. As in \cite{PS08,S06}, given the current point $x_k\in \cm$, we consider the following linear model of
the nonlinear map $F$ at $x_k\in \cm$:
\BE\label{llm}
F(x_k) + \rd  F(x_k) [ \xi_{k} ], \qquad \;  \xi_{k} \in T_{x_k}\cm.
\EE
Let $\Delta x_k\in T_{x_k}\cm$ be an exact or approximate minimizer of the following  trust region least square problem:
\BE\label{ls:dx}
\min_{ \xi_{k} \in T_{x_k}\cm,\; \|\xi_{k}\|\le\delta_k}\|F(x_k) + \rd   F(x_k) [ \xi_{k} ]\|,
\EE
where $\delta_k>0$ is the trust region radius.
The actual reduction and predicted reduction induced by $\Delta x_k$ at the current point $x_k\in \cm$ are defined by
\BE\label{def:ared}
{\rm Ared}_{k}(\Delta x_k):=\| F(x_k)\|-\| F(R_{x_k}(\Delta x_k))\|
\EE
and
\BE\label{def:pred}
{\rm Pred}_{k}(\Delta x_k):=\| F(x_k)\|-\| F(x_k)+\rd F(x_k)[\Delta x_k]\|.
\EE
Then the Ared/Pred condition needs to be tested, i.e., whether $\Delta x_k$ satisfies
the following condition
\BE\label{Ared/Predcondition}
\frac{{\rm Ared}_{k}(\Delta x_k)}{{\rm Pred}_{k}(\Delta x_k)}
= \frac{\| F(x_k)\|-\| F(R_{x_k}(\Delta x_k))\| }{ \| F(x_k)\|-\| F(x_k)+\rd F(x_k)[\Delta x_k]\| } \geq t,
\EE
where $0<t<1$ is given constant. If the Ared/Pred condition is satisfied by $\Delta x_k$, then define $x_{k+1}:= R_{x_k}(\Delta x_k)$
and compute $\delta_{k+1}$ by a prescribed rule.
If not, the trust region radius $\delta_k$ is shrunk and we need find a new tangent vector $\Delta x_k\in T_{x_k}\cm$ within the trust region.

We note that the nonlinear equation $F(x) = 0$ is underdetermined. Hence, the global minimizer of \eqref{ls:dx} is not unique.
To calculate a suitable $\Delta x_k\in T_{x_k}\cm$, we can generalize the idea of exact trust region method for solving
underdetermined equation between Euclidean spaces \cite[p.34]{S06} in the following way
\BE\label{EXACTTR}
\Delta x_k: = \argmin_{ \xi_k \bot {\rm null}(\rd F(x_k)), \; \| \xi_k \| \leq \delta_k} \| F(x_k) +\rd  F(x_k) [\xi_k ] \|,
\EE
where ${\rm null}(\cdot)$ means the null space of a linear mapping.
In general, the computation of $\Delta x_k$ by (\ref{EXACTTR}) is costly for large-scale problems.

In this paper, we generalize the underdetermined  dogleg method in \cite[p.42]{S06}, which was presented for solving an underdetermined nonlinear equation defined between Euclidean spaces, to the solution of \eqref{eq:nleq} over $\cm$. Suppose that $\grad f(x_k) \neq 0_{x_k}$,
the Cauchy point at $x_k\in \cm$ is defined to be the minimizer of
$\frac{1}{2}\|F(x_k)+\rd F(x_k)[\Delta x_k]\|^{2}$
along the steepest descent direction $-\grad f(x_k)=-(\rd F(x))^*[F(x)]$,
which is denoted by $\Delta x_k^{CP}$, i.e.,
\BE\label{CP2}
\Delta x_k^{CP}:=-\frac{\|(\rd F(x_k))^{*}[F(x_k)]\|^2}
{\|\rd F(x_k)\circ(\rd F(x_k))^{*}[F(x_k)]\|^2}(\rd F(x_k))^{*}[F(x_k)]\in T_{x_k}\cm.
\EE
Specially, we have
\BE\label{CPOT}
\Delta x_k^{CP} \perp {\rm null} (\rd F(x_k)).
\EE
The Riemannian Newton point $\Delta x_k^{N}$ is defined by
\BE\label{INNEWTON}
\begin{array}{cc}
\Delta x_k^{N}: = \argmin\limits_{ \xi_k \perp {\rm null} (\rd F(x_k))} \| F(x_k) +\rd  F(x_k) [\xi_k ] \|\in T_{x_k}\cm.
\end{array}
\EE
The dogleg curve $\Gamma^{DL}_k$ is defined to be the piecewise linear curve
joining the origin $0_{x_k}$, the Cauchy point $\Delta x_k^{CP}$, and
the Riemannian Newton point $\Delta x_k^{N}$.
Similar to the analysis in \cite[pp. 42-44]{S06}, the norm of the linear model $F(x_k)+ \rd F(x_k) [ \xi_{x_k} ]$
is monotone decreasing along the dogleg $\Gamma^{DL}_k$.
By (\ref{CPOT}) and (\ref{INNEWTON}) we have
\BE\label{XKOT}
\Delta x_k  \perp {\rm null} (\rd F(x_k)), \quad \forall \Delta x_k\in \Gamma^{DL}_k.
\EE
The Riemannian dogleg step aims to find the tangent vector $\Delta x_k$ such that
\[
\Delta x_k: = \argmin_{ \xi_k \in \Gamma^{DL}_k, \; \| \xi_k \| \leq \delta_k} \| F(x_k) +\rd  F(x_k) [\xi_k ] \|.
\]
The above minimization problem has a unique minimizer, which can be calculated explicitly.
The dogleg method is a special inexact trust region method, which is often computationally
efficient than the exact trust region method.
However, the Newton point $\Delta x_k^{N}$ is still computationally costly for large-scale problems.
Based on (\ref{EXACTTR}), (\ref{XKOT}), and the analysis in \cite{S06}, the orthogonality of $\Delta x_k$
with the null space of $\rd F(x_k)$ is essential for the convergence analysis.

In \cite{PS08}, inexact Newton dogleg methods were given for solving
nonlinear equations defined on Euclidean spaces.
To generalize these methods directly  to the solution of (\ref{eq:nleq}),
we need to find an inexact Newton point $\Delta x_k^{IN}\in T_{x_k}\cm$  such that
\BE\label{INNID}
\frac{\|F(x_k)+\rd F(x_k)[\Delta x_k^{IN}]\|}{\|F(x_k)\|} < \eta_k < \eta_{\max} < 1
\quad \mbox{and} \quad \Delta x_k^{IN} \perp {\rm null}(\rd F(x_k)),
\EE
where $\eta_k$ is a forcing term \cite{EW96}.
However, if the differential $\rd F(x_k):T_{x_k}\cm \to T_{F(x_k)}\ce$ is not surjective,
the first condition in  (\ref{INNID}) may not be attainable.
The Riemannian Newton point $T_{x_k}\cm \ni\Delta x_k^N \perp {\rm null} (\rd F(x_k))$ defined by (\ref{INNEWTON}) is
the minimum norm solution of the least squares problem
\[
\min\limits_{\Delta x_k\in T_{x_k}\cm} \|F(x_k) + \rd  F(x_k) [\Delta x_k^N ]\|,
\]
which is given in the form of
\[
\Delta x_k^N = -(\rd F(x_k))^\dag F(x_k),
\]
where $(\rd F(x_k))^\dag$ denotes the pseudoinverse of the linear operator $\rd F(x_k)$ \cite[pp. 163--164]{L69}. We note that
\[
(\rd F(x_k))^\dag=\lim_{\sigma\to 0^+}(\rd F(x_k))^*\circ \big( \rd F(x_k) \circ (\rd F(x_k))^*
+ \sigma \mathrm{id}_{T_{F(x_k)}\mathcal{E}}\big)^{-1},
\]
where $\mathrm{id}_{T_{F(x_k)}\mathcal{E}}$ is the identity operator
on $T_{F(x_k)}\mathcal{E}$.  This motivates us to solve the following perturbed Riemannian normal equation
\BE\label{RNEWTONEQ}
\Big(\rd F(x_k)\circ(\rd F(x_k))^* +\sigma_k \mathrm{id}_{T_{F(x_k)}\mathcal{E}}\Big)
[ \Delta z_k ] = - F(x_k),
\EE
for $\Delta z_k\in T_{F(x_k)}\ce$, where $\sigma_k >0$ is a given constant.
We observe that
$$\rd F(x_k)\circ(\rd F(x_k))^* +\sigma_k \mathrm{id}_{T_{F(x_k)}\mathcal{E}}$$
is a self-adjoint positive definite linear operator defined on the Euclidean space $\ce$. Therefore,
we can  solve  (\ref{RNEWTONEQ}) inexactly by using the conjugate gradient (CG) method \cite{GV13}. Moreover, once an approximate solution $\Delta z_k$ is obtained,
the inexact Newton point is given by $\Delta x_k^{IN}:= (\rd F(x_k))^*[ \Delta z_k ]$, which satisfies the second condition in (\ref{INNID}) naturally.

Therefore, the inexact dogleg curve $\widehat{\Gamma}^{DL}_k$ is defined to be the piecewise linear curve
joining the origin $0_{x_k}$, the Cauchy point $\widehat{\Delta x}_k^{CP}$ defined by
\BE\label{CP}
\widehat{\Delta x}_k^{CP}:=-\frac{\|(\rd F(x_k))^{*}[F(x_k)]\|^{2}}
{\|\rd F(x_k)\circ(\rd F(x_k))^{*}[F(x_k)]\|^{2}}(\rd F(x_k))^{*}[F(x_k)],
\EE
and the Riemannian inexact Newton point $\Delta x_k^{IN}$.

Based on the above analysis and sparked by the ideas in \cite{PS08,S06,ZBJ18}, we propose the following
Riemannian  inexact Newton dogleg method for solving \eqref{eq:nleq}.
\begin{algorithm}\label{dogleg1}
{\rm (Riemannian inexact Newton dogleg method)}
\begin{description}
\item [{\rm Step 0.}] Choose an initial point $ x_0 \in \cm$, $\epsilon>0$, $0<t<1$, $0<\sigma_{\max}<1$, $0<\theta_{\max}<1$, $0<\delta_{\min}<1$, $\delta_0\ge \delta_{\min}$,
                  and a nonnegative sequences $\{\bar{\eta}_k\in (0,1)\}$ with
                  $\lim_{k\to \infty} \bar{\eta}_k = 0$. Let $k:=0$.

\item [{\rm Step 1.}] If $\|F(x_k)\|<\epsilon$, then stop.
\item [{\rm Step 2.}] Apply the CG method to solve  \eqref{RNEWTONEQ} for $\Delta z_k\in T_{F(x_k)}\ce$
such that
    \BE\label{eq:tol1}
    \begin{array}{l}
    \big\|\big(\rd F(x_k)\circ(\rd F(x_k))^* +\sigma_k \mathrm{id}_{T_{F(x_k)}\mathcal{E}}\big)
    [ \Delta z_k ] + F(x_k)\big\|\leq  \eta_k\|  F(x_k) \|
    \end{array}
    \EE
    and
    \BE\label{eq:tol2}
    \begin{array}{l}
    \| \rd F(x_k)\circ (\rd F(x_k))^* [ \Delta z_k ] + F(x_k)\| < \|  F(x_k) \| ,
    \end{array}
    \EE
    where
    \BE\label{def:skek}
    \sigma_k:=\min\{\sigma_{\max},\| F(x_k)\|\} \quad \mbox{and} \quad \eta_k:=\min\{\bar{\eta}_k,\| F(x_k)\|\}.
    \EE

\item [{\rm Step 3.}]
    Define
    \BE\label{eq:direstep}
    \Delta x_k^{IN}:= (\rd F(x_k))^*[ \Delta z_k ].
    \EE
    Compute $\widehat{\Delta x}_k^{CP}$ by \eqref{CP}.
       Determine $\Delta x_k \in \widehat{\Gamma}^{DL}_k$ with $\min\{\delta_{\min},\|\Delta x_k^{IN}\|\} \leq \|\Delta x_k\| \leq \delta_k$.

\item [{\rm Step 4.}] While ${\rm Ared}_{k}(\Delta x_{k})<t\cdot {\rm {\rm {\rm Pred}}_{k}}(\Delta x_{k})$ do: \\
      If $\delta_k = \delta_{\min}$, stop; else choose $\theta_k \in (0,\theta_{\max}]$. \\
       Update $\delta_k = \max\{ \theta_k \delta_k, \delta_{\min}\}$.\\
      Redetermine $\Delta x_k \in \widehat{\Gamma}^{DL}_k$ with $\min\{\delta_{\min},\|\Delta x_k^{IN}\|\} \leq \|\Delta x_k\| \leq \delta_k$.

\item [\rm {Step 5.}] Set $x_{k+1} := R_{x_k} (\Delta x_k )$.
      Update $\delta_{k+1} \in [\delta_{\min},\infty)$.

\item [{\rm Step 6.}] Replace $k$ by $k + 1$ and go to {\rm Step 1}.
\end{description}
\end{algorithm}

On  Algorithm \ref{dogleg1}, we have several remarks as follows:
\begin{itemize}
\item The choices of $\sigma_k$ and $\eta_k$ in {\rm (\ref{def:skek})} are sparked by the similar idea in {\rm \cite{ZBJ18}}.

\item  The norm of the linear model $F(x_k) + \rd F(x_k) [ \xi_{x_k} ]$ is
monotone decreasing along the segment of  the inexact dogleg curve $\widehat{\Gamma}_{k}^{DL}$ between $0_{x_k}$ and $\widehat{\Delta x}_k^{CP}$, while it may not be monotone decreasing along the segment of $\widehat{\Gamma}_{k}^{DL}$ between $\widehat{\Delta x}_k^{CP}$
and $\Delta x_{k}^{IN}$.

\item We observe from Step 4 of  Algorithm \ref{dogleg1},  \eqref{def:ared}, and \eqref{def:pred} that for all $k\ge 0$,
\BE\label{seq:fxk}
\| F(x_k)\|-\| F(x_{k+1})\|\ge t(\| F(x_k)\|-\| F(x_k)+\rd F(x_k)[\Delta x_k]\|).
\EE
This shows that the sequence $\{F(x_k)\}$ is monotone decreasing if  Algorithm \ref{dogleg1} does not break down.
\item
The procedure for determining $\Delta x_k$ and $\delta_{k+1}$ in Steps 3--5 of Algorithm \ref{dogleg1} is presented  in {\rm Section \ref{sec5}}.
\end{itemize}
\subsection{Convergence analysis}

In this subsection, we establish the global and quadratic convergence of Algorithm \ref{dogleg1}.
Let
\BE\label{LEVELSET}
\Omega: = \big\{x\in \cm \ | \ \|F(x)\| \leq \|F(x_0)\| \big\}.
\EE

To derive the global convergence of Algorithm \ref{dogleg1}, we need the following basic assumption.
\begin{assumption} \label{ASSUM}
\begin{enumerate}
\item The mapping $F:\cm \to \ce$ is continuously differentiable on the level set $\Omega$.

\item  For the retraction $R$ defined on $\cm$, there exist two scalars $\nu >0$ and $\mu_{\nu} >0$ such that
\[
\nu\| \Delta x \| \geq  {\rm dist}\big(x,R_{x} (\Delta x) \big),
\]
for all $x\in \Omega$ and $\Delta x\in T_x\cm$ with $\| \Delta x\|\leq \mu_{\nu}$,
where ``{\rm dist}" means the Riemannian distance on $\cm$.
\end{enumerate}
\end{assumption}
\begin{remark}
If the level set $\Omega$ is compact, then the second condition in   {\rm Assumption \ref{ASSUM}}
is  satisfied. This is guaranteed if the Riemannian manifold $\cm$ is compact {\rm \cite[p. 149]{AMS08}}.
\end{remark}

To show the convergence of Algorithm {\rm\ref{dogleg1}}, for the iterates $\Delta x_k^{IN}$,
$\widehat{\Delta x}_k^{CP}$, and  $\Delta x_k$ generated by Algorithm \ref{dogleg1},
define
\begin{eqnarray}
&&\eta^{IN}_k: = \frac{\|F(x_k)+\rd F(x_k)[\Delta x_k^{IN}]\|}{\|F(x_k)\|},\label{def:etain} \\
&&\eta_k^{CP} :=\frac{\|F(x_k)+\rd F(x_k)[\widehat{\Delta x}_k^{CP}]\|}{\|F(x_k)\|}, \label{def:etacp}\\
&&\tau_k: =\frac{\|F(x_{k})+\rd F(x_{k})[\Delta x_{k}]\|}{\|F(x_{k})\|}\equiv 1-\frac{{\rm Pred}_{k}(\Delta x_k)}{\|F(x_{k})\|}.\label{def:tauk}
\end{eqnarray}

In the following, we give some lemmas, which are necessary for deducing  the global  convergence of Algorithm \ref{dogleg1}. First, by following the similar arguments of \cite[Lemma 1]{ZBJ18}, we have the following result on the reachability of
conditions \eqref{eq:tol1} and \eqref{eq:tol2} for solving  \eqref{RNEWTONEQ}.

\begin{lemma}\label{lem:tol}
Let $x_k$ be the current iterate generated by {\rm Algorithm {\rm\ref{dogleg1}}}. If $\grad f(x_k) \neq 0_{x_k}$,
then we can solve {\rm \eqref{RNEWTONEQ}} sufficiently accurately such that {\rm (\ref{eq:tol1})}
and {\rm (\ref{eq:tol2})} are satisfied.
\end{lemma}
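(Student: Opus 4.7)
The plan is to view \eqref{RNEWTONEQ} as a linear system on the Euclidean space $T_{F(x_k)}\mathcal{E}\simeq\mathcal{E}$ with coefficient operator $\mathcal{A}_k:=\rd F(x_k)\circ(\rd F(x_k))^*+\sigma_k\,\mathrm{id}_{T_{F(x_k)}\mathcal{E}}$. Since $\sigma_k>0$ and $\rd F(x_k)\circ(\rd F(x_k))^*$ is self-adjoint positive semidefinite, $\mathcal{A}_k$ is self-adjoint positive definite. Therefore, the CG method applied to $\mathcal{A}_k[\Delta z_k]=-F(x_k)$ is well defined and, in exact arithmetic, converges to the exact solution $\Delta z_k^\star:=-\mathcal{A}_k^{-1}[F(x_k)]$. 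In particular, the $\mathcal{A}_k$-residual $\mathcal{A}_k[\Delta z_k]+F(x_k)$ can be driven to have arbitrarily small norm, so the tolerance \eqref{eq:tol1} involving $\eta_k\|F(x_k)\|$ is reachable after finitely many CG iterations.

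The nontrivial part is \eqref{eq:tol2}, which controls the residual of the \emph{unperturbed} normal equation. My plan is first to verify that the exact solution $\Delta z_k^\star$ satisfies \eqref{eq:tol2} strictly, and then to invoke continuity. For the first step, I use the spectral decomposition of $B_k:=\rd F(x_k)\circ(\rd F(x_k))^*$: write $B_k=\sum_i\mu_i\,v_iv_i^*$ with $\mu_i\ge 0$ and orthonormal eigenvectors $\{v_i\}$, and expand $F(x_k)=\sum_i c_iv_i$. A direct computation shows
\[
\|B_k[\Delta z_k^\star]+F(x_k)\|^2=\sum_i\frac{\sigma_k^2}{(\mu_i+\sigma_k)^2}c_i^2,
\]
whereas $\|F(x_k)\|^2=\sum_ic_i^2$. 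The factor $\sigma_k^2/(\mu_i+\sigma_k)^2$ equals $1$ when $\mu_i=0$ and is strictly less than $1$ when $\mu_i>0$, so strict inequality $\|B_k[\Delta z_k^\star]+F(x_k)\|<\|F(x_k)\|$ follows provided $c_i\ne 0$ for some $i$ with $\mu_i>0$.

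The hypothesis $\grad f(x_k)\ne 0_{x_k}$ is exactly what supplies this: by \eqref{GRAVEC}, $(\rd F(x_k))^*[F(x_k)]\ne 0$, so $F(x_k)\notin\ker((\rd F(x_k))^*)=\mathrm{range}(B_k)^\perp$, which means the projection of $F(x_k)$ onto $\mathrm{range}(B_k)=\mathrm{span}\{v_i:\mu_i>0\}$ is nonzero. Hence some $c_i$ with $\mu_i>0$ is nonzero, and \eqref{eq:tol2} holds strictly at $\Delta z_k^\star$. Finally, since the map $\Delta z\mapsto\|B_k[\Delta z]+F(x_k)\|-\|F(x_k)\|$ is continuous and strictly negative at $\Delta z_k^\star$, it remains negative on an open neighborhood of $\Delta z_k^\star$; because CG iterates converge to $\Delta z_k^\star$, after sufficiently many steps they simultaneously satisfy \eqref{eq:tol1} and \eqref{eq:tol2}. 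The main obstacle is precisely the strict inequality in \eqref{eq:tol2}, which is why the spectral argument using $\grad f(x_k)\ne 0_{x_k}$ is crucial.
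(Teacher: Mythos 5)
Your argument is correct and is essentially the same as the one the paper relies on: the paper defers to \cite[Lemma 1]{ZBJ18}, whose proof likewise evaluates the unperturbed residual at the exact solution $\Delta z_k^\star=-\big(\rd F(x_k)\circ(\rd F(x_k))^*+\sigma_k\,\mathrm{id}\big)^{-1}[F(x_k)]$ via the spectral decomposition of $\rd F(x_k)\circ(\rd F(x_k))^*$, uses $\grad f(x_k)\neq 0_{x_k}$ to get the strict inequality in \eqref{eq:tol2}, and then concludes by continuity and convergence of the CG iterates. No gaps.
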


On the quantity $\eta^{IN}_k$ defined by \eqref{def:etain}, we have the following lemma.
\begin{lemma}\label{lem:etain}
Let $x_k$ be the current iterate  generated by {\rm Algorithm \ref{dogleg1}}. If $\grad f(x_k) \neq 0_{x_k}$, then, for $\eta^{IN}_k$ defined by \eqref{def:etain}, we have
\[
\eta^{IN}_k \leq  \frac{\sigma_k}{\sigma_k + \lambda_{\min}\big(\rd F(x_k)\circ (\rd F(x_k))^*\big) } + \eta_k
\quad \mbox{and} \quad \eta^{IN}_k < 1,
\]
where $\lambda_{\min}(\cdot)$ denotes the smallest eigenvalue of a linear operator.
\end{lemma}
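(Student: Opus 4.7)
The plan is to reduce everything to a spectral estimate for the self-adjoint, positive semi-definite operator $A_k:=\rd F(x_k)\circ(\rd F(x_k))^*$ acting on the Euclidean space $T_{F(x_k)}\ce\simeq\ce$. First I observe that \eqref{eq:direstep} gives $\rd F(x_k)[\Delta x_k^{IN}]=A_k[\Delta z_k]$, so the numerator in \eqref{def:etain} equals $\|F(x_k)+A_k[\Delta z_k]\|$. The hypothesis $\grad f(x_k)=(\rd F(x_k))^*[F(x_k)]\neq 0_{x_k}$ guarantees $F(x_k)\neq 0$, so the quotient defining $\eta^{IN}_k$ makes sense.

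The key algebraic step is to introduce the residual of \eqref{eq:tol1}, namely $r_k := (A_k+\sigma_k\,\mathrm{id}_{T_{F(x_k)}\mathcal{E}})[\Delta z_k]+F(x_k)$, which satisfies $\|r_k\|\le \eta_k\|F(x_k)\|$ by construction. Solving for $\Delta z_k=(A_k+\sigma_k\,\mathrm{id})^{-1}[-F(x_k)+r_k]$ and substituting, a short manipulation using $\mathrm{id}-\sigma_k(A_k+\sigma_k\,\mathrm{id})^{-1}=A_k(A_k+\sigma_k\,\mathrm{id})^{-1}$ yields the identity
\[
F(x_k)+A_k[\Delta z_k]\;=\;\sigma_k(A_k+\sigma_k\,\mathrm{id})^{-1}[F(x_k)]\;+\;A_k(A_k+\sigma_k\,\mathrm{id})^{-1}[r_k].
\]
Because $A_k$ is self-adjoint positive semi-definite, functional calculus gives the two operator-norm bounds $\|(A_k+\sigma_k\,\mathrm{id})^{-1}\|=1/(\sigma_k+\lambda_{\min}(A_k))$ and $\|A_k(A_k+\sigma_k\,\mathrm{id})^{-1}\|=\max_\lambda \lambda/(\lambda+\sigma_k)\le 1$, where $\lambda$ ranges over the spectrum of $A_k$. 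Taking norms in the identity, using $\|r_k\|\le \eta_k\|F(x_k)\|$, and dividing through by $\|F(x_k)\|$ yields the first asserted inequality
\[
\eta^{IN}_k\;\le\;\frac{\sigma_k}{\sigma_k+\lambda_{\min}(A_k)}\;+\;\eta_k.
\]

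The second assertion $\eta^{IN}_k<1$ is essentially free: condition \eqref{eq:tol2}, which Lemma \ref{lem:tol} guarantees is attainable, says exactly $\|A_k[\Delta z_k]+F(x_k)\|<\|F(x_k)\|$, which upon dividing by $\|F(x_k)\|$ is the strict bound claimed. There is no substantive obstacle here; the only care needed is to observe that condition \eqref{eq:tol2} was engineered into Algorithm \ref{dogleg1} precisely so that the bound $\eta^{IN}_k<1$ holds unconditionally, without having to require $\lambda_{\min}(A_k)>0$ (i.e.\ surjectivity of $\rd F(x_k)$) or to tighten the forcing term $\eta_k$.
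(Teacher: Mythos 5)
Your proof is correct and follows essentially the same route as the paper, which simply invokes Lemma \ref{lem:tol} together with the estimate of \cite[Lemma 3]{ZBJ18}: your resolvent identity $F(x_k)+A_k[\Delta z_k]=\sigma_k(A_k+\sigma_k\,\mathrm{id})^{-1}[F(x_k)]+A_k(A_k+\sigma_k\,\mathrm{id})^{-1}[r_k]$ with the spectral bounds on the self-adjoint operator $A_k$ is exactly the standard argument that the cited lemma encapsulates, and your observation that $\eta_k^{IN}<1$ is just condition \eqref{eq:tol2} (attainable by Lemma \ref{lem:tol}) matches the paper's reasoning. The only difference is that you write the argument out in full rather than citing it, which is a harmless (indeed clarifying) expansion.
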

\begin{proof}
This follows from Lemma \ref{lem:tol} and \cite[Lemma 3]{ZBJ18}.
\end{proof}

On the quantity $\eta_k^{CP}$  defined by \eqref{def:etacp}, we have the following result.
\begin{lemma}\label{lem:etain2}
Let $x_k$ be the current iterate  generated by {\rm Algorithm {\rm\ref{dogleg1}}}. If $\grad f(x_k) \neq 0_{x_k}$, then, for $\eta_k^{CP}$ defined by \eqref{def:etacp}, we have
\[
\eta_k^{CP} <1.
\]
\end{lemma}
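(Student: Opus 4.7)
My plan is to unpack the definition of $\widehat{\Delta x}_k^{CP}$ and compute the squared norm $\|F(x_k)+\rd F(x_k)[\widehat{\Delta x}_k^{CP}]\|^2$ directly, since the Cauchy point was by construction obtained by minimizing $\frac{1}{2}\|F(x_k)+\rd F(x_k)[\xi]\|^2$ along the single ray $\xi = -\alpha (\rd F(x_k))^*[F(x_k)]$, $\alpha \ge 0$. Comparing with the value at $\alpha = 0$ should give strict decrease, hence $\eta_k^{CP}<1$.

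To execute this, I would abbreviate $A:=\rd F(x_k)$, $r:=F(x_k)$, and $g:=A^*[r]=\grad f(x_k)$. The hypothesis $\grad f(x_k)\ne 0_{x_k}$ gives $g\ne 0_{x_k}$ and, in particular, $r\ne 0$, so the ratio defining $\eta_k^{CP}$ is well posed. Writing $\widehat{\Delta x}_k^{CP}=-\alpha_k g$ with $\alpha_k=\|g\|^2/\|A[g]\|^2$, the key identity is the adjoint relation $\langle r,A[g]\rangle=\langle A^*[r],g\rangle=\|g\|^2$, which I would expand to obtain
\[
\|r-\alpha_k A[g]\|^2 = \|r\|^2-2\alpha_k\|g\|^2+\alpha_k^2\|A[g]\|^2 = \|r\|^2-\frac{\|g\|^4}{\|A[g]\|^2}.
\]
Dividing by $\|r\|^2>0$ then yields $(\eta_k^{CP})^2 = 1-\|g\|^4/(\|r\|^2\|A[g]\|^2)$.

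The only subtlety is to verify that $\alpha_k$ is actually defined, i.e.\ that $A[g]\ne 0$. If on the contrary $A[g]=0$, then $\langle A[g],r\rangle =0$, but the adjoint identity forces this inner product to equal $\|g\|^2>0$, a contradiction. Hence $\|A[g]\|>0$, the correction $\|g\|^4/(\|r\|^2\|A[g]\|^2)$ is strictly positive, and the strict inequality $\eta_k^{CP}<1$ follows. I expect no real obstacle here beyond being careful about this non-degeneracy check and about using the Euclidean inner product on $\ce$ (so that the adjoint $A^*$ behaves as in the linear-algebra computation), both of which are already in force under the running setup of Section~\ref{sec3}.
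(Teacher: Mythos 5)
Your proposal is correct and follows essentially the same route as the paper: both expand $\|F(x_k)+\rd F(x_k)[\widehat{\Delta x}_k^{CP}]\|^2$ using the adjoint identity $\langle F(x_k),\rd F(x_k)\circ(\rd F(x_k))^{*}[F(x_k)]\rangle=\|(\rd F(x_k))^{*}[F(x_k)]\|^2$ to obtain $\|F(x_k)\|^2$ minus a strictly positive correction term, which gives $\eta_k^{CP}<1$. Your explicit non-degeneracy check that $\rd F(x_k)\circ(\rd F(x_k))^{*}[F(x_k)]\neq 0$ is handled in the paper by the same inequality chain (its display showing $0<\|(\rd F(x_k))^{*}[F(x_k)]\|^2\le\|F(x_k)\|\,\|\rd F(x_k)\circ(\rd F(x_k))^{*}[F(x_k)]\|$), so there is no substantive difference.
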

\begin{proof}
By hypothesis, $\grad f(x_k) = (\rd F(x_k))^*[F(x_k)] \neq 0_{x_k}$. Thus,
\begin{eqnarray}\label{bd:dfxk}
0&<&\|(\rd F(x_k))^{*}[F(x_k)]\|^2=
\langle (\rd F(x_k))^{*}[F(x_k)], (\rd F(x_k))^{*}[F(x_k)]\rangle \nonumber\\
&=& \langle F(x_k),\rd F(x_k)\circ(\rd F(x_k))^{*}[F(x_k)]\rangle \nonumber\\
&\le &  \|F(x_k)\|\cdot \|\rd F(x_k)\circ(\rd F(x_k))^{*}[F(x_k)]\|.
\end{eqnarray}
It follows from \eqref{CP}  that
\begin{eqnarray*}
&&\|F(x_k)+\rd F(x_k)[\widehat{\Delta x}_k^{CP}]\|^2 \\
&=& \|F(x_k)\|^2 + \|\rd F(x_k)[\widehat{\Delta x}_k^{CP}]\|^2 + 2\langle F(x_k),\rd F(x_k)[\widehat{\Delta x}_k^{CP}] \rangle \\
&=& \|F(x_k)\|^2 + \frac{\|(\rd F(x_k))^{*}[F(x_k)]\|^4}
{\|\rd F(x_k)\circ(\rd F(x_k))^{*}[F(x_k)]\|^4} \|\rd F(x_k)\circ(\rd F(x_k))^{*}[F(x_k)]\|^2 \\
&& - \frac{2\|(\rd F(x_k))^{*}[F(x_k)]\|^2}
{\|\rd F(x_k)\circ(\rd F(x_k))^{*}[F(x_k)]\|^2} \langle F(x_k),\rd F(x_k)\circ(\rd F(x_k))^{*}[F(x_k)] \rangle \\
&=&  \|F(x_k)\|^2 - \frac{\|(\rd F(x_k))^{*}[F(x_k)]\|^4}
{\|\rd F(x_k)\circ(\rd F(x_k))^{*}[F(x_k)]\|^2} \\
&=& \|F(x_k)\|^2 \Bigg(1- \frac{\|(\rd F(x_k))^{*}[F(x_k)]\|^4}
{\|F(x_k)\|^2\|\rd F(x_k)\circ(\rd F(x_k))^{*}[F(x_k)]\|^2}\Bigg).
\end{eqnarray*}
This, together with \eqref{def:etacp} and \eqref{bd:dfxk}, yields $\eta_k^{CP} <1$.
\end{proof}

On the quantity $\tau_k$ defined by \eqref{def:tauk}, we have the following result.
\begin{lemma}\label{lem:tauk}
Let $x_k$ be the current iterate  generated by {\rm Algorithm \ref{dogleg1}}. If $\grad f(x_k) \neq 0_{x_k}$, then, for $\tau_k$ defined by \eqref{def:tauk}, we have
\[
0\leq\tau_k <1, \quad
\|F(x_{k+1})\| \le (1-t(1-\tau_k))\|F(x_k)\|.
\]
\end{lemma}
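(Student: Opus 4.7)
The plan is to derive the two claims in sequence, first establishing the bounds on $\tau_k$ and then converting the Ared/Pred acceptance condition of Step 4 into the promised decay estimate for $\|F(x_{k+1})\|$.

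For the first claim, nonnegativity $\tau_k \ge 0$ is immediate from \eqref{def:tauk}, since it is the ratio of a norm with the positive quantity $\|F(x_k)\|$ (positivity follows from $\grad f(x_k) = (\rd F(x_k))^*[F(x_k)] \ne 0_{x_k}$, which forces $F(x_k) \ne 0$). To obtain $\tau_k < 1$, I would show that $\|F(x_k) + \rd F(x_k)[\Delta x_k]\| < \|F(x_k)\|$. The map $\xi \mapsto \|F(x_k) + \rd F(x_k)[\xi]\|$ is convex on $T_{x_k}\cm$ as the composition of a norm with an affine operator. By Lemmas \ref{lem:etain} and \ref{lem:etain2}, both $\eta_k^{CP} < 1$ and $\eta_k^{IN} < 1$, so the two endpoints $\widehat{\Delta x}_k^{CP}$ and $\Delta x_k^{IN}$ of the second segment of the inexact dogleg curve $\widehat{\Gamma}^{DL}_k$ satisfy the strict inequality. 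Along the first segment from $0_{x_k}$ to $\widehat{\Delta x}_k^{CP}$, the linear-model norm is monotone decreasing (as recorded in the second bullet of the remark following Algorithm \ref{dogleg1}); along the second segment, convexity bounds its value by the maximum of the endpoint values, which is still $< \|F(x_k)\|$. Since Step 3 requires $\|\Delta x_k\| \ge \min\{\delta_{\min}, \|\Delta x_k^{IN}\|\} > 0$, the step $\Delta x_k \in \widehat{\Gamma}^{DL}_k$ is nonzero and strictly beyond $0_{x_k}$, whence $\tau_k < 1$.

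For the second claim, I would divide \eqref{seq:fxk},
\[
\|F(x_k)\| - \|F(x_{k+1})\| \ge t\bigl(\|F(x_k)\| - \|F(x_k) + \rd F(x_k)[\Delta x_k]\|\bigr),
\]
through by $\|F(x_k)\| > 0$ and invoke \eqref{def:tauk} to get
\[
1 - \frac{\|F(x_{k+1})\|}{\|F(x_k)\|} \ge t(1-\tau_k),
\]
which rearranges to the desired estimate $\|F(x_{k+1})\| \le (1 - t(1-\tau_k))\|F(x_k)\|$.

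The principal obstacle is the strict inequality $\tau_k < 1$, and within that, the verification that $\Delta x_k$ is truly nonzero. If $\Delta x_k^{IN} = 0_{x_k}$ were admitted, then Step 3's lower bound would collapse to zero and nothing would force $\Delta x_k \ne 0_{x_k}$. Ruling this out relies on the tolerance \eqref{eq:tol2} together with $(\rd F(x_k))^*[F(x_k)] \ne 0_{x_k}$, since $\Delta x_k^{IN} = (\rd F(x_k))^*[\Delta z_k] = 0_{x_k}$ combined with \eqref{eq:tol2} would force $\|F(x_k)\| < \|F(x_k)\|$, a contradiction. Once nontriviality of the dogleg point is secured, the monotonicity along the Cauchy segment and the convexity argument on the second segment close out the strict inequality cleanly.
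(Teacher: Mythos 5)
Your proof is correct and follows essentially the same route as the paper's: split into the two segments of $\widehat{\Gamma}^{DL}_k$, use the strict decrease of the linear-model norm on the Cauchy segment and norm convexity with Lemmas \ref{lem:etain} and \ref{lem:etain2} on the second segment to get $0\leq\tau_k<1$, then combine with \eqref{seq:fxk} for the decay estimate. Your explicit verification via \eqref{eq:tol2} that $\Delta x_k^{IN}\neq 0_{x_k}$ (hence $\Delta x_k\neq 0_{x_k}$) is a detail the paper leaves implicit, but it does not change the argument.
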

\begin{proof}
By hypothesis, $\grad f(x_k) = (\rd F(x_k))^*[F(x_k)] \neq 0_{x_k}$, i.e., $x_k$ is not a stationary point of $f$.
Since  $\|F(x_k) + \rd F(x_k) [ \xi_{x_k} ]\|$ is strictly monotone decreasing along the segment of
$\widehat{\Gamma}_{k}^{DL}$ between $0_{x_k}$ and $\widehat{\Delta x}_k^{CP}$,
if $\Delta x_k$ lies on $\widehat{\Gamma}^{DL}_k$ between $0_{x_k}$ and $\widehat{\Delta x}_k^{CP}$,  we have
\BE\label{DL1231-2}
\eta_{k}^{CP}\|F(X_k)\| \leq \|F(x_k)+\rd  F(x_k) [\Delta x_k]\|
< \|F(x_k)\|.
\EE
If $\Delta x_k$ lies on $\widehat{\Gamma}^{DL}_k$ between
$\widehat{\Delta x}_k^{CP}$ and $\Delta x_{k}^{IN}$, then it follows from
\eqref{def:etain}, (\ref{def:etacp}),  norm convexity,
and Lemmas \ref{lem:etain} and \ref{lem:etain2} that
\BE\label{DL1231}
0\leq \|F(x_k)+\rd  F(x_k) [\Delta x_k]\|\leq\max\{\eta_{k}^{CP},\eta_{k}^{IN}\}\|F(x_k)\|
< \|F(x_k)\|.
\EE
Based on (\ref{def:tauk}), (\ref{DL1231-2}), and (\ref{DL1231}), we can obtain
$0\leq\tau_k <1$.
Then, we have by \eqref{seq:fxk},
\[
\| F(x_{k+1})\| \le \| F(x_k)\|- t(\| F(x_k)\|-\| F(x_k)+\rd F(x_k)[\Delta x_k]\|)=\big(1-t(1-\tau_k)\big) \| F(x_k)\|,
\]
This completes the proof.
\end{proof}


On the iterate $\Delta x_k^{IN}$ generated by Algorithm {\rm\ref{dogleg1}},  we have the following result.
\begin{lemma}\label{lem:dxkin}
Let $x_k$ be the current iterate  generated by {\rm Algorithm \ref{dogleg1}}. If $\grad f(x_k) \neq 0_{x_k}$, then
\[
\|\Delta x_k^{IN}\| \le (1+\eta_k)\normmm{(\rd F(x_k))^\dag}\cdot \|F(x_k)\|.
\]
\end{lemma}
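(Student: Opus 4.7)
\medskip

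\noindent\emph{Proof plan for Lemma~\ref{lem:dxkin}.}
The plan is to exploit the fact that $\Delta x_k^{IN}$ is, by construction, the image under $(\rd F(x_k))^*$ of the inexact solution $\Delta z_k$ of the perturbed normal equation \eqref{RNEWTONEQ}, and then to bound the composed operator $(\rd F(x_k))^*\circ\big(\rd F(x_k)\circ(\rd F(x_k))^* + \sigma_k\,\mathrm{id}_{T_{F(x_k)}\mathcal{E}}\big)^{-1}$ by $\normmm{(\rd F(x_k))^\dag}$ uniformly in $\sigma_k > 0$. For convenience let me write $A_k := \rd F(x_k)$.

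First, I would introduce the residual vector $r_k := (A_kA_k^* + \sigma_k\,\mathrm{id})[\Delta z_k] + F(x_k)$. By the inner inexactness criterion \eqref{eq:tol1} we have $\|r_k\|\le\eta_k\|F(x_k)\|$. Since $A_kA_k^* + \sigma_k\,\mathrm{id}$ is self-adjoint positive definite (so invertible), I can solve for $\Delta z_k = (A_kA_k^* + \sigma_k\,\mathrm{id})^{-1}(r_k - F(x_k))$. Applying $A_k^*$ and using \eqref{eq:direstep} then gives
\[
\Delta x_k^{IN} \;=\; A_k^*\circ(A_kA_k^* + \sigma_k\,\mathrm{id})^{-1}[\,r_k - F(x_k)\,],
\]
so by the triangle inequality
\[
\|\Delta x_k^{IN}\| \;\le\; \normmm{A_k^*\circ(A_kA_k^* + \sigma_k\,\mathrm{id})^{-1}}\cdot\bigl(\|F(x_k)\| + \|r_k\|\bigr)\;\le\;\normmm{A_k^*\circ(A_kA_k^* + \sigma_k\,\mathrm{id})^{-1}}\,(1+\eta_k)\,\|F(x_k)\|.
\]

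The remaining task, and the one step that is not purely formal, is to prove the operator-norm inequality
\[
\normmm{A_k^*\circ(A_kA_k^* + \sigma_k\,\mathrm{id})^{-1}}\;\le\;\normmm{A_k^\dag}\qquad(\sigma_k>0).
\]
Since $A_k$ is a linear map between finite-dimensional inner product spaces (the tangent space $T_{x_k}\cm$ and the Euclidean space $\ce$), I would invoke its singular value decomposition $A_k = U\Sigma V^*$. Then $A_kA_k^* = U\Sigma\Sigma^* U^*$ and a direct calculation shows that $A_k^*\circ(A_kA_k^* + \sigma_k\,\mathrm{id})^{-1} = V\,\Sigma^*(\Sigma\Sigma^* + \sigma_k I)^{-1}\,U^*$, whose nonzero singular values are $s_i/(s_i^2 + \sigma_k)$ for the nonzero singular values $s_i$ of $A_k$. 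Since $s_i/(s_i^2+\sigma_k)\le 1/s_i$ for every $\sigma_k\ge 0$, the operator norm is bounded by $\max_i\,1/s_i = \normmm{A_k^\dag}$, giving the desired inequality. Combining this with the previous display yields the conclusion.

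The main (and only real) obstacle is the operator-norm bound just discussed; all other steps are bookkeeping with the defining identities \eqref{eq:direstep}--\eqref{eq:tol1} and the triangle inequality. The SVD argument is clean in finite dimensions but must be made carefully because $A_kA_k^*$ may be singular (when $A_k$ is not surjective), so the regularization $\sigma_k\,\mathrm{id}$ is essential for invertibility, while the presence of the outer $A_k^*$ is what forces the bound to stay uniform in $\sigma_k$.
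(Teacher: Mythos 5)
Your argument is correct and is essentially the same one the paper relies on: the paper's proof of Lemma \ref{lem:dxkin} simply invokes \cite[Lemma 2]{ZBJ18}, whose underlying argument is exactly your computation — solve \eqref{RNEWTONEQ} for $\Delta z_k$ in terms of the residual allowed by \eqref{eq:tol1}, apply $(\rd F(x_k))^*$ via \eqref{eq:direstep}, and bound $\normmm{(\rd F(x_k))^*\circ(\rd F(x_k)\circ(\rd F(x_k))^*+\sigma_k\,\mathrm{id})^{-1}}$ by $\normmm{(\rd F(x_k))^\dag}$ through the singular values $s_i/(s_i^2+\sigma_k)\le 1/s_i$. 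Your write-up is a correct self-contained version of that proof, with no gaps.
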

\begin{proof}
It follows from the same arguments of \cite[Lemma 2]{ZBJ18}.
\end{proof}

On the iterate $\widehat{\Delta x}_k^{CP}$  generated by Algorithm {\rm\ref{dogleg1}},  we have the following result.
\begin{lemma}\label{lem:dxkcp}
Let $x_k$ be the current iterate  generated by {\rm Algorithm {\rm\ref{dogleg1}}}. If $\grad f(x_k) \neq 0_{x_k}$
and $\rd F(x_k)$ is surjective, then
\BE\label{DELTACPEST}
\|\widehat{\Delta x}_k^{CP}\| \le\lambda_{\min}^{-\frac{1}{2}}\big(\rd F(x_k)\circ (\rd F(x_k))^*\big) \|F(x_{k})\|.
\EE
\end{lemma}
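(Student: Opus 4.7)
My plan is to prove the bound by direct computation from the explicit formula \eqref{CP} for $\widehat{\Delta x}_k^{CP}$, abbreviating $A := \rd F(x_k)\circ(\rd F(x_k))^*$ and $F := F(x_k)$. First I would rewrite the numerator and denominator of \eqref{CP} using the inner product in $\mathcal{E}$: since $(\rd F(x_k))^*$ is the adjoint of $\rd F(x_k)$, one has $\|(\rd F(x_k))^{*}[F]\|^2 = \langle F,A[F]\rangle$ and $\|A[F]\|^2 = \langle F,A^2[F]\rangle$. Consequently,
\[
\|\widehat{\Delta x}_k^{CP}\|^2 \;=\; \frac{\langle F,A[F]\rangle^{3}}{\langle F,A^2[F]\rangle^{2}}.
\]

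The key observation is that, because $\rd F(x_k)$ is assumed surjective, the operator $A$ is a self-adjoint positive definite operator on $T_{F(x_k)}\mathcal{E}$, so $\lambda_{\min}(A)>0$ and $A \succeq \lambda_{\min}(A)\,\mathrm{id}_{T_{F(x_k)}\mathcal{E}}$. The plan is then to combine two simple inequalities:
\begin{enumerate}
\item[(i)] Cauchy--Schwarz in $\mathcal{E}$: $\langle F,A[F]\rangle \le \|F\|\cdot\|A[F]\| = \|F\|\,\langle F,A^2[F]\rangle^{1/2}$.
\item[(ii)] The positive-definiteness bound: $\langle F,A^2[F]\rangle = \|A[F]\|^2 \ge \lambda_{\min}(A)^{2}\,\|F\|^2$, obtained by applying $A\succeq \lambda_{\min}(A)\,\mathrm{id}$ to $A[F]$ (equivalently, writing it in an eigenbasis of $A$).
\end{enumerate}

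From (i) I get $\langle F,A[F]\rangle^{3}\le \|F\|^{3}\,\langle F,A^2[F]\rangle^{3/2}$, which inserted in the formula for $\|\widehat{\Delta x}_k^{CP}\|^2$ yields
\[
\|\widehat{\Delta x}_k^{CP}\|^2 \;\le\; \|F\|^{3}\,\langle F,A^2[F]\rangle^{-1/2}.
\]
Inequality (ii) then gives $\langle F,A^2[F]\rangle^{-1/2} \le \lambda_{\min}(A)^{-1}\|F\|^{-1}$, so altogether $\|\widehat{\Delta x}_k^{CP}\|^2 \le \lambda_{\min}(A)^{-1}\|F\|^{2}$, and taking square roots produces \eqref{DELTACPEST}.

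There is really no hard obstacle here; the only point to be careful about is to invoke surjectivity of $\rd F(x_k)$ explicitly so that $\lambda_{\min}(A)>0$ (and the denominator in \eqref{CP} is nonzero, which is anyhow ensured by the hypothesis $\grad f(x_k)\neq 0_{x_k}$ through \eqref{bd:dfxk}). Everything else is Cauchy--Schwarz and a one-line spectral bound.
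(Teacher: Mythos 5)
Your proposal is correct and follows essentially the same route as the paper's proof: a direct computation from the explicit formula \eqref{CP}, Cauchy--Schwarz to control $\langle F,A[F]\rangle$ against $\|F\|\,\|A[F]\|$, and a spectral lower bound in terms of $\lambda_{\min}\big(\rd F(x_k)\circ(\rd F(x_k))^*\big)$. The only cosmetic difference is that you work with $\|\widehat{\Delta x}_k^{CP}\|^2$ and bound the denominator via $\|A[F]\|\ge\lambda_{\min}(A)\|F\|$, whereas the paper bounds the unsquared norm and finishes with the Rayleigh-quotient inequality $\langle F,A[F]\rangle\ge\lambda_{\min}(A)\|F\|^2$.
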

\begin{proof}
By hypothesis, $\grad f(x_k) = (\rd F(x_k))^*[F(x_k)] \neq 0_{x_k}$. Since $\rd F(x_k)$ is surjective, we know that $\lambda_{\min}(\rd F(x_k)\circ (\rd F(x_k))^*) >0$. Using the definition of $\widehat{\Delta x}_k^{CP}$ we have
\begin{eqnarray*}
\Big\|\widehat{\Delta x}_k^{CP}\Big\|&=&   \frac{\|(\rd F(x_{k}))^{*}[F(x_{k})]\|^{2}}{\|\rd F(x_{k})\circ(\rd F(x_{k}))^{*}[F(x_{k})]\|^{2}}
\|(\rd F(x_{k}))^{*}[F(x_{k})]\| \\
&=&  \frac{\|(\rd F(x_{k}))^{*}[F(x_{k})]\|^{4}}
{\|\rd F(x_{k})\circ(\rd F(x_{k}))^{*}[F(x_{k})]\|^{2}}\cdot
\frac{1}{\|(\rd F(x_{k}))^{*}[F(x_{k})]\|}\\
&=&  \frac{\langle F(x_{k}),\rd F(x_{k})\circ(\rd F(x_{k}))^{*}[F(x_{k})]\rangle^{2}}
{\|\rd F(x_{k})\circ(\rd F(x_{k}))^{*}[F(x_{k})]\|^{2}}\cdot
\frac{1}{\|(\rd F(x_{k}))^{*}[F(x_{k})]\|}\\
&\leq&  \frac{\|F(x_{k})\|^{2}}{\|(\rd F(x_{k}))^{*}[F(x_{k})]\|}
= \left(\frac{\|F(x_{k})\|^2}{\langle (F(x_{k}),\rd F(x_{k})\circ\rd F(x_{k}))^{*}[F(x_{k})]\rangle}\right)^{\frac{1}{2}} \|F(x_{k})\|\\
&\le& \lambda_{\min}^{-\frac{1}{2}}\big(\rd F(x_k)\circ (\rd F(x_k))^*\big) \|F(x_{k})\|.
\end{eqnarray*}
\end{proof}

We now derive the following result on the sequence $\{\eta^{IN}_k\}$ generated by Algorithm {\rm\ref{dogleg1}}, where $\eta^{IN}_k$ is defined by \eqref{def:etain}.
\begin{lemma}\label{lemma23}
Suppose the first condition of {\rm Assumption {\rm \ref{ASSUM}}} is satisfied and {\rm Algorithm \ref{dogleg1}} generates an infinite iterative sequence $\{x_{k}\}$. Let $\bar{x}$ be an accumulation point  of $\{x_k\}$ and $\{x_k\}_{k\in \mathcal{K}}$ be a subsequence of $\{x_k\}$ converging to $\bar{x}$. If $\grad f(\bar{x}) \neq 0_{\bar{x}}$, then, for $\eta^{IN}_k$ defined by \eqref{def:etain}, we have
\[
\lim\limits_{k \to \infty, k\in\ck} \eta_k^{IN} < 1.
\]
\end{lemma}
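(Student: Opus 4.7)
The plan is as follows. Since $\grad f(\bar x) = (\rd F(\bar x))^*[F(\bar x)] \neq 0_{\bar x}$, we must have $F(\bar x) \neq 0$. Combined with the monotone decrease of $\{\|F(x_k)\|\}$ established in \eqref{seq:fxk}, the subsequential convergence $x_k \to \bar x$ forces the whole sequence $\|F(x_k)\|$ to decrease to $\|F(\bar x)\| > 0$. By \eqref{def:skek} this yields $\sigma_k \to \sigma_\infty := \min\{\sigma_{\max}, \|F(\bar x)\|\} > 0$, while $\eta_k \to 0$ since $\bar\eta_k \to 0$. Writing $A_k := \rd F(x_k) \circ (\rd F(x_k))^*$ and $A_* := \rd F(\bar x) \circ (\rd F(\bar x))^*$ as self-adjoint positive semidefinite operators on $\mathcal{E}$, the $C^1$ hypothesis of Assumption \ref{ASSUM} over the level set $\Omega$ gives $A_k \to A_*$ in operator norm along $\mathcal{K}$. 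Let $z_k^* := -(A_k + \sigma_k \mathrm{id}_{\mathcal{E}})^{-1}[F(x_k)]$ and $z_*^* := -(A_* + \sigma_\infty \mathrm{id}_{\mathcal{E}})^{-1}[F(\bar x)]$; since each $A_k + \sigma_k \mathrm{id}_{\mathcal{E}}$ has inverse of norm at most $1/\sigma_k$, continuity of inversion gives $z_k^* \to z_*^*$. Inequality \eqref{eq:tol1} yields $\Delta z_k = z_k^* + e_k$ with error $e_k := (A_k + \sigma_k \mathrm{id}_{\mathcal{E}})^{-1}[r_k]$ obeying $\|e_k\| \leq \eta_k \|F(x_k)\|/\sigma_k \to 0$, hence $\Delta z_k \to z_*^*$.

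Combining \eqref{eq:direstep} with the identity $\rd F(x_k)[\Delta x_k^{IN}] = A_k[\Delta z_k]$, continuity gives
\[
F(x_k) + \rd F(x_k)[\Delta x_k^{IN}] \to F(\bar x) + A_*[z_*^*] = -\sigma_\infty z_*^*,
\]
where the last equality is the defining relation for $z_*^*$. Together with $\|F(x_k)\| \to \|F(\bar x)\| > 0$ and \eqref{def:etain}, this yields
\[
\lim_{k \to \infty,\, k \in \mathcal{K}} \eta_k^{IN} = \frac{\sigma_\infty \|z_*^*\|}{\|F(\bar x)\|}.
\]

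The main obstacle is showing this limiting ratio is strictly less than $1$ without assuming surjectivity of $\rd F(\bar x)$, since otherwise $\lambda_{\min}(A_*)$ may vanish and the bound in Lemma \ref{lem:etain} degenerates to something $\geq 1$. To handle this, I would pair $(A_* + \sigma_\infty \mathrm{id}_{\mathcal{E}})[z_*^*] = -F(\bar x)$ with $z_*^*$ in $\mathcal{E}$ to obtain
\[
\|(\rd F(\bar x))^*[z_*^*]\|^2 + \sigma_\infty \|z_*^*\|^2 = -\langle F(\bar x), z_*^* \rangle \leq \|F(\bar x)\| \cdot \|z_*^*\|.
\]
Because $F(\bar x) \neq 0$ forces $z_*^* \neq 0$, dividing by $\|z_*^*\|$ gives $\sigma_\infty \|z_*^*\| \leq \|F(\bar x)\| - \|(\rd F(\bar x))^*[z_*^*]\|^2/\|z_*^*\|$, and strict inequality reduces to showing $(\rd F(\bar x))^*[z_*^*] \neq 0_{\bar x}$. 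If it vanished, the defining equation would reduce to $\sigma_\infty z_*^* = -F(\bar x)$, giving $(\rd F(\bar x))^*[F(\bar x)] = -\sigma_\infty (\rd F(\bar x))^*[z_*^*] = 0_{\bar x}$, contradicting $\grad f(\bar x) \neq 0_{\bar x}$. Hence $\eta_k^{IN}$ converges along $\mathcal{K}$ to a value strictly below $1$.
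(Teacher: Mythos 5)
Your proposal is correct and follows essentially the same route as the paper: pass to the limit in the perturbed normal equation using $\sigma_k\to\sigma_\infty>0$, $\eta_k\to 0$, and continuity of $\rd F$, identify the limit of $F(x_k)+\rd F(x_k)[\Delta x_k^{IN}]$ as $\sigma_\infty(\rd F(\bar x)\circ(\rd F(\bar x))^*+\sigma_\infty\,\mathrm{id})^{-1}[F(\bar x)]=-\sigma_\infty z_*^*$, and use $\grad f(\bar x)\neq 0_{\bar x}$ to force the strict inequality. The only difference is that you make explicit, via the pairing of the limiting equation with $z_*^*$ and Cauchy--Schwarz, the strict norm reduction that the paper asserts directly from $F(\bar x)\notin{\rm null}((\rd F(\bar x))^*)$; this is a welcome sharpening of the same argument rather than a different approach.
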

\begin{proof}
By hypothesis,  $\grad f(\bar{x}) = (\rd F(\bar{x}))^*[F(\bar{x})] \neq 0_{\bar{x}}$. Thus $F(\bar{x}) \neq 0$. Since $\bar{x}$ is an accumulation point  of $\{x_k\}$, there exists a subsequence $\{x_k\}_{k\in\ck}$, which converges to $\bar{x}$. Hence, by the continuous differentiability of $F$, there exists a constant $c>0$ such that for all $k\in\ck$ sufficiently large,
\BE\label{BOUNDBELOWC}
\|F(x_k)\| \geq c.
\EE
This, together with  (\ref{def:skek}), yields
\BE\label{sigmaklbd}
\bar{\sigma}=\lim_{k \to \infty,\; k\in\ck} \sigma_k \geq \min\{ \sigma_{\max}, c\}>0.
\EE
We note that   $F$ is continuously differentiable. Thus,
\BE\label{FDFCC}
\lim\limits_{k \to \infty,k\in \mathcal{K}} \rd F(x_k) = \rd F(\bar{x}), \quad \mbox{and} \quad
\lim\limits_{k \to \infty,k\in \mathcal{K}} (\rd F(x_k))^* = (\rd F(\bar{x}))^*.
\EE
Let
\BE\label{def:VK}
W(x_k) := \big( \rd F(x_k) \circ (\rd F(x_k))^* +\sigma_k \mathrm{id}_{T_{F(x_k)} \mathcal{E}}\big)[ \Delta z_k ]
    + F(x_k).
\EE
By hypothesis,  $\lim_{k\to \infty}\bar{\eta}_k = 0$.  It follows from
(\ref{eq:tol1}), (\ref{def:skek}), and (\ref{def:VK}) that
\BE\label{VKLIMIT}
\lim_{k\to \infty} W(x_k) =  0.
\EE
Using (\ref{eq:tol1}) and (\ref{def:VK}) we have
\BE\label{DELTAZK}
\Delta z_k =  \big(\rd F(x_k) \circ (\rd F(x_k))^* +\sigma_k \mathrm{id}_{T_{F(x_k)} \mathcal{E}}\big)^{-1}
[W(x_k) - F(x_k)].
\EE
From (\ref{eq:direstep}), (\ref{sigmaklbd}), (\ref{FDFCC}),  (\ref{VKLIMIT}), and (\ref{DELTAZK}) we obtain
\begin{eqnarray}\label{FDFEST1}
& &\lim\limits_{k \to \infty,k\in \mathcal{K}}  F(x_k) + \rd F(x_k)[\Delta x^{IN}_k] \nonumber\\
&=&\lim\limits_{k \to \infty,k\in \mathcal{K}}  F(x_k) + \rd F(x_k)\circ (\rd F(x_k))^*[\Delta z_k] \nonumber\\
&=&  F(\bar{x}) - \rd F(\bar{x})\circ (\rd F(\bar{x}))^* \circ
 \Big(\rd F(\bar{x}) \circ (\rd F(\bar{x}))^* +\bar{\sigma} \mathrm{id}_{T_{F(\bar{x})} \mathcal{E}}\Big)^{-1}[ F(\bar{x}) ] \nonumber\\
&=& \bar{\sigma}\cdot \Big(\rd F(\bar{x}) \circ (\rd F(\bar{x}))^*
+\bar{\sigma} \mathrm{id}_{T_{F(\bar{x})} \mathcal{E}}\Big)^{-1} [F(\bar{x})].
\end{eqnarray}
Since $\grad f(\bar{x}) = (\rd F(\bar{x}))^*[F(\bar{x})] \neq 0_{\bar{x}}$, we have
\BE\label{FDFEST2}
F(\bar{x})\not \in {\rm null}((\rd F(\bar{x}))^*).
\EE
Using (\ref{FDFEST1}) and (\ref{FDFEST2}) we have
\BE\label{FDFEST3}
\lim\limits_{k \to \infty,k\in \mathcal{K}} \|F(x_k) + \rd F(x_k)[\Delta x^{IN}_k]\| < \|F(\bar{x})\|.
\EE
From \eqref{def:etain} and (\ref{FDFEST3}), we have
\[
\lim\limits_{k \to \infty, k\in\ck} \eta_k^{IN} < 1.
\]
\end{proof}

On the global convergence of Algorithm \ref{dogleg1}, we have the following theorem.
\begin{theorem}\label{Thm:1}
Suppose the first condition of  {\rm Assumption {\rm \ref{ASSUM}}}  is satisfied  and $\{x_{k}\}$ is an infinite sequence generated by {\rm Algorithm \ref{dogleg1}}. Then every accumulation point of $\{x_k\}$ is a stationary point of $f$.
\end{theorem}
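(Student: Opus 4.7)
I would argue by contradiction. Suppose that some accumulation point $\bar{x}$ of $\{x_k\}$ satisfies $\grad f(\bar{x}) = (\rd F(\bar{x}))^*[F(\bar{x})] \neq 0_{\bar{x}}$, and extract a subsequence $\{x_k\}_{k \in \ck}$ with $x_k \to \bar{x}$. The assumption forces $F(\bar{x}) \neq 0$, hence $\|F(\bar{x})\| > 0$ by continuity of $F$.

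First I would establish that $\tau_k \to 1$ along the \emph{full} sequence. By \eqref{seq:fxk} and \eqref{def:tauk},
\[
\|F(x_k)\| - \|F(x_{k+1})\| \ge t\|F(x_k)\|(1-\tau_k) \ge 0,
\]
so $\{\|F(x_k)\|\}$ is non-increasing and convergent; passing to the limit along $\ck$ and using continuity of $F$ identifies the limit as $\|F(\bar{x})\| > 0$. Hence the left-hand side tends to zero while $\|F(x_k)\|$ stays bounded away from zero, forcing $\tau_k \to 1$ globally.

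The rest of the argument is to derive the contradiction $\limsup_{k \in \ck} \tau_k < 1$. Two ingredients are already available: Lemma \ref{lem:etain2} (or rather the closed-form expression for $\eta_k^{CP}$ derived in its proof) is continuous in $x_k$, so combined with $(\rd F(\bar{x}))^*[F(\bar{x})] \neq 0$ it yields $\limsup_{k \in \ck} \eta_k^{CP} < 1$; Lemma \ref{lemma23} yields the analogous $\limsup_{k \in \ck} \eta_k^{IN} < 1$. It remains to relate $\tau_k$ to $\eta_k^{CP}$ and $\eta_k^{IN}$ by a short case analysis on where $\Delta x_k$ falls on the inexact dogleg $\widehat{\Gamma}^{DL}_k$. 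If $\Delta x_k$ lies on the segment between $\widehat{\Delta x}_k^{CP}$ and $\Delta x_k^{IN}$, write $F(x_k) + \rd F(x_k)[\Delta x_k]$ as a convex combination of $F(x_k) + \rd F(x_k)[\widehat{\Delta x}_k^{CP}]$ and $F(x_k) + \rd F(x_k)[\Delta x_k^{IN}]$, and the triangle inequality gives $\tau_k \le \max\{\eta_k^{CP}, \eta_k^{IN}\}$. If instead $\Delta x_k = s_k \widehat{\Delta x}_k^{CP}$ lies on the first segment with $s_k \in (0, 1]$, an explicit one-dimensional quadratic computation (as in the proof of Lemma \ref{lem:etain2}) yields
\[
\tau_k^2 = 1 - s_k(2-s_k)\bigl(1 - (\eta_k^{CP})^2\bigr).
\]

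The main obstacle is to produce a uniform positive lower bound on $s_k$ for $k \in \ck$ large. I would proceed as follows. By continuity and $(\rd F(\bar{x}))^*[F(\bar{x})] \neq 0$ (which lies in the range of $(\rd F(\bar{x}))^*$ and is therefore not annihilated by $\rd F(\bar{x})$), the denominator $\|\rd F(x_k)\circ(\rd F(x_k))^*[F(x_k)]\|$ appearing in the formula for $\widehat{\Delta x}_k^{CP}$ is bounded away from zero near $\bar{x}$, so $\|\widehat{\Delta x}_k^{CP}\|$ is bounded above on a neighborhood of $\bar{x}$ without invoking surjectivity. Combined with the algorithmic lower bound $\|\Delta x_k\| \ge \min\{\delta_{\min}, \|\Delta x_k^{IN}\|\}$ and with the fact that the case $\|\Delta x_k^{IN}\| < \delta_{\min}$ forces $\Delta x_k$ to concentrate near $\Delta x_k^{IN}$ (so that the first-segment analysis is not needed), this yields $s_k \ge c > 0$ along $\ck$, and hence $\limsup_{k \in \ck} \tau_k < 1$, contradicting $\tau_k \to 1$. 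The proof is then complete.
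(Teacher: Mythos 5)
Your overall strategy coincides with the paper's: argue by contradiction, bound $\eta_k^{CP}$ and $\eta_k^{IN}$ uniformly below $1$ along the subsequence (via the closed form behind Lemma \ref{lem:etain2} and via Lemma \ref{lemma23}), split into cases according to which segment of $\widehat{\Gamma}^{DL}_k$ the step $\Delta x_k$ lies on, conclude that $1-\tau_k$ stays bounded away from $0$ along $\ck$, and contradict the monotone convergence of $\|F(x_k)\|$ to the positive value $\|F(\bar{x})\|$ (your ``$\tau_k\to1$ globally'' is the mirror image of the paper's divergence of $\sum_k(1-\tau_k)$ combined with Lemma \ref{lem:tauk}). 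The convex-combination bound on the second segment and the identity $\tau_k^2=1-s_k(2-s_k)\bigl(1-(\eta_k^{CP})^2\bigr)$ on the first segment are both correct, as is the observation that $(\rd F(\bar{x}))^*[F(\bar{x})]\neq 0_{\bar{x}}$ keeps the Cauchy-point denominator away from zero, giving the upper bound $\|\widehat{\Delta x}_k^{CP}\|\le\kappa_1$ without surjectivity.

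The gap is in how you obtain the uniform lower bound on $s_k$. Your claim that the case $\|\Delta x_k^{IN}\|<\delta_{\min}$ ``forces $\Delta x_k$ to concentrate near $\Delta x_k^{IN}$, so that the first-segment analysis is not needed'' is not justified by Algorithm \ref{dogleg1}: Steps 3--4 only require $\Delta x_k\in\widehat{\Gamma}^{DL}_k$ with $\min\{\delta_{\min},\|\Delta x_k^{IN}\|\}\le\|\Delta x_k\|\le\delta_k$, so when $\|\Delta x_k^{IN}\|<\delta_{\min}$ the iterate may still lie on the segment between $0_{x_k}$ and $\widehat{\Delta x}_k^{CP}$ with $\|\Delta x_k\|$ as small as $\|\Delta x_k^{IN}\|$; nothing in the algorithm pins it near the inexact Newton point (that behaviour comes from the particular selection procedure used in the numerical section, where $\Delta x_k:=\Delta x_k^{IN}$ whenever $\|\Delta x_k^{IN}\|\le\delta_k$, not from the algorithm the theorem is stated for). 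Without a lower bound on $\|\Delta x_k^{IN}\|$ you therefore cannot exclude $s_k\to0$, and the first-segment estimate collapses. The missing step is exactly the paper's inequality \eqref{DELTAIINNEWTON}: from $\eta_k^{IN}\le\eta^{IN}_{\max}<1$ one gets $\|\rd F(x_k)[\Delta x_k^{IN}]\|\ge(1-\eta^{IN}_{\max})\|F(x_k)\|$, hence $\|\Delta x_k^{IN}\|\ge(1-\eta^{IN}_{\max})\|F(x_k)\|/\normmm{\rd F(x_k)}\ge\bar{\delta}>0$ along $\ck$, because $\|F(x_k)\|$ is bounded below (monotonicity plus $F(\bar{x})\neq0$) and $\normmm{\rd F(x_k)}$ is bounded above on the convergent subsequence. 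With that, $\min\{\delta_{\min},\|\Delta x_k^{IN}\|\}\ge\min\{\delta_{\min},\bar{\delta}\}>0$ and your $s_k\ge c>0$ follows from $\|\widehat{\Delta x}_k^{CP}\|\le\kappa_1$; the rest of your argument then goes through. So the proof is repairable with ingredients you already have, but as written the dismissal of the small-$\|\Delta x_k^{IN}\|$ case is a genuine gap.
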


\begin{proof}
Let $\bar{x}$ be an accumulation point of $\{x_{k}\}$, then there exists a subsequence $\{x_k\}_{k\in \mathcal{K}}$ of $\{x_k\}$
such that $\lim_{k\to \infty, k\in \mathcal{K}} x_k = \bar{x}$.
%
By contradiction, we assume  that $\bar{x}$ is not a stationary point of $F$. Then we have $\grad f(\bar{x}) = (\rd F(\bar{x}))^*[F(\bar{x})] \neq 0_{\bar{x}}$ and thus $F(\bar{x})\neq 0$.
Since $F$ is continuously differentiable, we have
\BE\label{DFFNORMBD}
0< \inf_{k\in \mathcal{K}} \|F(x_k)\| \quad \mbox{and} \quad
0< \inf_{k\in \mathcal{K}} \normmm{\rd F(x_{k})}  \le  \sup_{k\in \mathcal{K}} \normmm{\rd F(x_{k})} < \infty.
\EE
Using the continuous differentiability of $F$, (\ref{CP}),  \eqref{def:etacp}, and Lemma \ref{lem:etain2} we can obtain
\BE\label{CPLIMIT}
\lim\limits_{k\to \infty, k\in \mathcal{K}}  \widehat{\Delta x}_k^{CP} =-\frac{\|(\rd F(\bar{x}))^{*}[F(\bar{x})]\|^{2}}
{\|\rd F(\bar{x})\circ (\rd F(\bar{x}))^{*}[F(\bar{x})]\|^{2}}(\rd F(\bar{x}))^{*}[F(\bar{x})]
= \widehat{\Delta \bar{x}}^{CP},
\EE
and
\BE\label{CPLIMITeta}
\lim\limits_{k\to \infty, k\in \mathcal{K}} \eta_k^{CP}  =
\frac{\|F(\bar{x})+\rd F(\bar{x})[\widehat{\Delta \bar{x}}^{CP}]\|}{\|F(\bar{x})\|} <1.
\EE
By (\ref{CPLIMIT}) and (\ref{CPLIMITeta}), there exist two constants $\kappa_1>0$ and $\eta_{\max}^{CP}\in (0,1)$ such that for all $k\in \mathcal{K}$ sufficiently large,
\BE\label{CPPCBOUND1}
\| \widehat{\Delta x}_k^{CP}\| \leq \kappa_1 \quad \mbox{and} \quad \eta_k^{CP} \leq \eta_{\max}^{CP} < 1.
\EE

By assumption,  $\grad f(\bar{x}) \neq 0_{\bar{x}}$. By Lemma \ref{lemma23},  there exists a constant $\eta_{\max}^{IN}\in (0,1)$ such that
for all $k\in \mathcal{K}$ sufficiently large,
\BE\label{CPPCBOUND2}
 \eta_k^{IN} \leq \eta_{\max}^{IN} < 1.
\EE
Using \eqref{def:etain}, (\ref{CPPCBOUND2}), and triangle inequality we have for all $k\in \mathcal{K}$ sufficiently large,
\[
\|F(x_k)\|-\|\rd  F(x_k) [\Delta x_{k}^{IN}]\|\leq\|F(x_k)+\rd  F(x_k) [\Delta x_{k}^{IN}]\| = \eta_{k}^{IN}\|F(x_k)\|\leq\eta^{IN}_{\max}\|F(x_k)\|,
\]
which, together with (\ref{DFFNORMBD}), implies that for all $k\in \mathcal{K}$ sufficiently large,
\BE\label{DELTAIINNEWTON}
\|\Delta x_{k}^{IN}\|\geq\frac{1-\eta^{IN}_{\max}}{\normmm{\rd F(x_{k})}}\|F(x_k)\|
\ge \frac{1-\eta^{IN}_{\max}}{\sup\limits_{k\in \mathcal{K}}\normmm{\rd F(x_{k})}} \inf_{k\in \mathcal{K}}\|F(x_k)\|\ge \overline{\delta},
\EE
where  $\overline{\delta}>0$ is  a constant.

If $\Delta x_k$ lies on $\widehat{\Gamma}^{DL}_k$ between
$\widehat{\Delta x}_k^{CP}$ and $\Delta x_{k}^{IN}$, then it follows from
\eqref{def:etain}, (\ref{def:etacp}), (\ref{CPPCBOUND1}), (\ref{CPPCBOUND2}), and norm convexity that for all $k\in \mathcal{K}$ sufficiently large,
\BE\label{DL1}
\|F(x_k)+\rd  F(x_k) [\Delta x_k]\|\leq\max\{\eta_{k}^{CP},\eta_{k}^{IN}\}\|F(x_k)\|
\leq\max\left\{\eta_{\max}^{CP},\eta_{\max}^{IN}\right\}\|F(x_k)\|.
\EE
If $\Delta x_k$ lies on $\widehat{\Gamma}^{DL}_k$ between $0_{x_k}$ and $\widehat{\Delta x}_k^{CP}$, then we have  by \eqref{DELTAIINNEWTON}, for all $k\in \mathcal{K}$ sufficiently large,
\BE\label{deltalowbd2}
0<\delta_*:=\min\{\delta_{\min},\bar{\delta} \}\leq\|\Delta x_k\|\leq \|\widehat{\Delta x}_k^{CP}\|.
\EE
We also  note that  the norm of the local linear model (\ref{llm}) is monotone decreasing along the segment of $\widehat{\Gamma}^{DL}_k$
between $0_{x_k}$ and $\widehat{\Delta x}_k^{CP}$. Using (\ref{CPPCBOUND1}), (\ref{deltalowbd2}), and norm convexity, for $\Delta x_k$ lying on $\widehat{\Gamma}^{DL}_k$ between $0_{x_k}$ and $\widehat{\Delta x}_k^{CP}$, we have for all $k\in \mathcal{K}$ sufficiently large,
\begin{eqnarray}\label{DL2}
&&\|F(x_{k})+\rd F(x_{k})[\Delta x_{k}]\|\le
\left\|F(x_{k})+\rd F(x_{k})\left[\frac{\delta_*}{\|\widehat{\Delta x}_k^{CP}\|}\widehat{\Delta x}_k^{CP}\right]\right\|  \nonumber \\
&\leq&\Big(1-\frac{\delta_*}{\|\widehat{\Delta x}_k^{CP}\|}\Big)\|F(x_{k})\|+\frac{\delta_*}{\|\widehat{\Delta x}_k^{CP}\|}
\|F(x_{k})+\rd F(x_{k})[\widehat{\Delta x}_k^{CP}]\| \nonumber \\
&=&\Big(1-\frac{\delta_*}{\|\widehat{\Delta x}_k^{CP}\|}(1-\eta_{k}^{CP})\Big)\|F(x_{k})\|
\le\Big(1-\frac{\delta_*}{\kappa_1}(1-\eta_{\max}^{CP})\Big)\|F(x_{k})\|.
\end{eqnarray}

From (\ref{DL1}) and (\ref{DL2}) we have for all $k\in \mathcal{K}$ sufficiently large,
\[
\|F(x_{k})+\rd F(x_{k})[\Delta x_{k}]\|\leq\overline{\eta}\|F(x_{k})\|,
\]
where
\[
\overline{\eta}:=\max\left\{\eta_{\max}^{CP},\eta_{\max}^{IN}, 1-\frac{\delta_*}{\kappa_1}(1-\eta_{\max}^{CP}) \right\}.
\]
Thus for all $k\in \mathcal{K}$ sufficiently large,
\[
\frac{{\rm Pred}_{k}(\Delta x_k)}{\|F(x_{k})\|}=\frac{\|F(x_{k})\|-\|F(x_{k})+\rd F(x_{k})[\Delta x_{k}]\|}{\|F(x_{k})\|}
\geq (1-\overline{\eta})>0, \quad \forall k\in \mathcal{K},\; k > \tilde{k}.
\]
This implies that the series $\sum_{k=0}^{\infty}\frac{{\rm Pred}_{k}(\Delta x_k)}{\|F(x_{k})\|}$ diverges. This, together with  \eqref{def:tauk}, means that $\sum_{k=0}^{\infty}(1-\tau_k)$ diverges. It follows from Lemma \ref{lem:tauk} that
\begin{eqnarray}\label{ineq:fxk1}
\|F(x_{k+1})\| &\le& \big(1-t(1-\tau_k)\big)\|F(x_k)\| \le \prod_{l=0}^k\big(1-t(1-\tau_l)\big)\|F(x_0)\| \nonumber \\
&\le & \exp\Big(-t\sum_{l=0}^k(1-\tau_l)\Big)\|F(x_0)\| \to 0,\quad \mbox{as $k\to\infty$}.
\end{eqnarray}
By the assumption that $F$ is continuously differentiable we have $F(\bar{x})=0$, which is a contradiction. The proof is complete.
\end{proof}

To show the convergence of the sequence  $\{x_{k}\}$ generated by Algorithm \ref{dogleg1}, we need the following lemma.
\begin{lemma}\label{lemma231}
Suppose the first condition of  {\rm Assumption {\rm \ref{ASSUM}}} is satisfied and $\{x_{k}\}$ is an infinite sequence generated by {\rm Algorithm \ref{dogleg1}}. Let $\bar{x}$ be an accumulation point  of $\{x_k\}$
and $\{x_k\}_{k\in \mathcal{K}}$ be a subsequence of $\{x_k\}$ converging to $\bar{x}$.
If $\rd F(\bar{x})$ is surjective, then
\[
\lim\limits_{k \to \infty,k\in \mathcal{K}} \eta_k^{IN} = 0.
\]
\end{lemma}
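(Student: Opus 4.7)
The plan is to combine the global convergence result from Theorem \ref{Thm:1} with the quantitative bound on $\eta_k^{IN}$ provided by Lemma \ref{lem:etain}, and to exploit surjectivity of $\rd F(\bar{x})$ in two essential ways: once to force $F(\bar{x})=0$, and once to ensure that $\lambda_{\min}(\rd F(x_k)\circ(\rd F(x_k))^*)$ stays bounded away from zero along the subsequence.

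First I would observe that by Theorem \ref{Thm:1}, the accumulation point $\bar{x}$ is a stationary point of $f$, i.e.,
\[
(\rd F(\bar{x}))^*[F(\bar{x})] = 0_{\bar{x}}.
\]
Since $\rd F(\bar{x})$ is surjective, its adjoint $(\rd F(\bar{x}))^*$ is injective, so the displayed equality forces $F(\bar{x})=0$. Therefore, along the subsequence $\{x_k\}_{k\in\mathcal{K}}$, continuity of $F$ yields $\lim_{k\to\infty,\,k\in\mathcal{K}}\|F(x_k)\|=0$. Looking at the definitions \eqref{def:skek}, this immediately gives $\sigma_k=\min\{\sigma_{\max},\|F(x_k)\|\}\to 0$ and $\eta_k=\min\{\bar{\eta}_k,\|F(x_k)\|\}\to 0$ as $k\to\infty$, $k\in\mathcal{K}$.

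Next, I would use the continuous differentiability of $F$: the self-adjoint operator $\rd F(x_k)\circ(\rd F(x_k))^*$ depends continuously on $x_k$, so its smallest eigenvalue does as well. Since $\rd F(\bar{x})\circ(\rd F(\bar{x}))^*$ is positive definite (from surjectivity of $\rd F(\bar{x})$), we get a uniform lower bound
\[
\lambda_{\min}\bigl(\rd F(x_k)\circ (\rd F(x_k))^*\bigr)\;\ge\;\tfrac{1}{2}\lambda_{\min}\bigl(\rd F(\bar{x})\circ (\rd F(\bar{x}))^*\bigr)\;>\;0
\]
for all $k\in\mathcal{K}$ sufficiently large.

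Finally, I would plug these facts into the bound from Lemma \ref{lem:etain},
\[
\eta_k^{IN}\;\le\;\frac{\sigma_k}{\sigma_k+\lambda_{\min}\bigl(\rd F(x_k)\circ(\rd F(x_k))^*\bigr)}+\eta_k,
\]
and let $k\to\infty$ along $\mathcal{K}$. The first term tends to $0$ because the numerator tends to $0$ while the denominator stays bounded away from $0$, and the second term tends to $0$ directly. Hence $\lim_{k\to\infty,\,k\in\mathcal{K}}\eta_k^{IN}=0$, as required. The only potentially delicate step is the passage from stationarity to $F(\bar{x})=0$, but surjectivity of $\rd F(\bar{x})$ makes this immediate; the rest is a routine continuity/limit argument.
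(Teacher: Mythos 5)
Your proposal is correct and follows essentially the same route as the paper: invoke Theorem \ref{Thm:1} to get stationarity, use surjectivity of $\rd F(\bar{x})$ (injectivity of the adjoint) to conclude $F(\bar{x})=0$, deduce $\sigma_k\to 0$ and $\eta_k\to 0$ along $\mathcal{K}$ from \eqref{def:skek}, bound $\lambda_{\min}\bigl(\rd F(x_k)\circ(\rd F(x_k))^*\bigr)$ away from zero by continuity, and finish with the estimate of Lemma \ref{lem:etain}. No substantive difference from the paper's argument.
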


\begin{proof}
By hypothesis, $\bar{x}$ is an accumulation point of the sequence $\{x_{k}\}$ generated by {\rm Algorithm {\rm \ref{dogleg1}}}. It follows from Theorem \ref{Thm:1} that $\bar{x}$ is a stationary point of $f$, i.e.,  $\grad f(\bar{x}) = (\rd F(\bar{x}))^*[F(\bar{x})] \neq 0_{\bar{x}}$.
Since $\rd F(\bar{x})$ is surjective, we have  $F(\bar{x})=0$.
By the monotonicity of $\{\|F(x_k)\|\}$ and $\lim\limits_{k\to \infty, k\in \mathcal{K}} x_k = \bar{x}$ we have
\BE\label{FXZERONM}
\lim_{k\to \infty, k\in \mathcal{K}} \|F(x_k)\| = 0.
\EE
From  (\ref{def:skek}) and (\ref{FXZERONM}) we  obtain
\BE\label{T222}
\lim_{k\to \infty,k\in\ck} \sigma_k  =0= \lim_{k\to \infty,k\in\ck} \eta_k.
\EE
By hypothesis, $\rd  F(\bar{x})$ is surjective and $F$ is continuously differentiable. Thus,
\BE\label{T223}
\lim_{k\to \infty, k\in \mathcal{K}} \lambda_{\min}\big(\rd F(x_k)\circ (\rd F(x_k))^*\big)
= \lambda_{\min}\big(\rd F(\bar{x})\circ (\rd F(\bar{x}))^*\big) >0.
\EE
It follows from Lemma \ref{lem:etain},  (\ref{T222}), and (\ref{T223}) that
$\lim\limits_{k \to \infty,k\in \mathcal{K}} \eta_k^{IN} = 0$.
\end{proof}

On the convergence of the sequence  $\{\|F(x_k)\|\}$ generated by Algorithm \ref{dogleg1}, we have the following result.
\begin{theorem}\label{thm:stauk}
Suppose {\rm Assumption {\rm \ref{ASSUM}}}   is satisfied  and $\{x_{k}\}$ is an infinite sequence generated by {\rm Algorithm \ref{dogleg1}}.
If $\bar{x}$ is an accumulation point of $\{x_{k}\}$ such that $\rd F(\bar{x})$ is surjective,
then $\sum\limits_{k=0}^{\infty}(1-\tau_k)$ diverges and $\lim\limits_{k\to\infty}\|F(x_k)\|=0$.
\end{theorem}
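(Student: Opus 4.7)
The plan is to adapt the dogleg analysis from the proof of Theorem~\ref{Thm:1}, replacing the ``$\bar{x}$ not stationary'' hypothesis used there by the present surjectivity hypothesis. Concretely, I would first pin down $F(\bar{x})$, then produce a uniform positive lower bound for $1-\tau_k$ along the subsequence $\{x_k\}_{k\in\mathcal{K}}$, and finally push this through Lemma~\ref{lem:tauk} to obtain $\|F(x_k)\|\to 0$.

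The opening step is immediate: Theorem~\ref{Thm:1} forces $(\rd F(\bar{x}))^*[F(\bar{x})]=0_{\bar{x}}$, and surjectivity of $\rd F(\bar{x})$ (equivalently, injectivity of $(\rd F(\bar{x}))^*$) gives $F(\bar{x})=0$, so by continuity $\|F(x_k)\|\to 0$ along $\mathcal{K}$. Combined with Lemma~\ref{lem:dxkcp}, this also forces $\|\widehat{\Delta x}_k^{CP}\|\to 0$ along $\mathcal{K}$. To bound $\tau_k$ away from $1$, I would redo the dogleg case analysis \eqref{DL1}--\eqref{DL2}. Two ingredients are upgraded relative to the proof of Theorem~\ref{Thm:1}: Lemma~\ref{lemma231} now delivers $\eta_k^{IN}\to 0$, and surjectivity makes $A_k:=\rd F(x_k)\circ(\rd F(x_k))^*$ uniformly positive definite for large $k\in\mathcal{K}$, so the identity extracted from the proof of Lemma~\ref{lem:etain2}, together with the elementary spectral bound
\[
\frac{\langle F(x_k),A_k[F(x_k)]\rangle^2}{\|F(x_k)\|^2\,\|A_k[F(x_k)]\|^2}\geq \frac{\lambda_{\min}^2(A_k)}{\lambda_{\max}^2(A_k)},
\]
yields $\eta_k^{CP}\leq\eta_{\max}^{CP}<1$. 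Because $\|\widehat{\Delta x}_k^{CP}\|\to 0$ while the Step~3 floor $\min\{\delta_{\min},\|\Delta x_k^{IN}\|\}$ bounds $\|\Delta x_k\|$ below, for $k\in\mathcal{K}$ sufficiently large either $\|\Delta x_k^{IN}\|<\delta_{\min}$ (in which case the dogleg constraint forces $\Delta x_k=\Delta x_k^{IN}$ and $\tau_k=\eta_k^{IN}\to 0$) or $\|\Delta x_k\|\geq\delta_{\min}>\|\widehat{\Delta x}_k^{CP}\|$, placing $\Delta x_k$ on the inexact Newton segment of the dogleg, whence $\tau_k\leq\max\{\eta_k^{CP},\eta_k^{IN}\}\leq\eta_{\max}^{CP}$ by \eqref{DL1}. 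In either alternative, $1-\tau_k\geq c>0$ for all $k\in\mathcal{K}$ large.

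Since $\mathcal{K}$ is infinite and $1-\tau_k\geq 0$ by Lemma~\ref{lem:tauk}, the subsequential lower bound already gives $\sum_{k=0}^{\infty}(1-\tau_k)\geq\sum_{k\in\mathcal{K}}(1-\tau_k)=\infty$, which is the first assertion. Inserting this divergence into the recursion $\|F(x_{k+1})\|\leq(1-t(1-\tau_k))\|F(x_k)\|$ from Lemma~\ref{lem:tauk} and invoking $1-t(1-\tau_l)\leq\exp(-t(1-\tau_l))$ exactly as in \eqref{ineq:fxk1} yields $\|F(x_{k+1})\|\leq\exp\bigl(-t\sum_{l=0}^k(1-\tau_l)\bigr)\|F(x_0)\|\to 0$, giving the second assertion. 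I expect the main technical obstacle to be the Cauchy-segment bookkeeping: because $\|F(x_k)\|\to 0$ along $\mathcal{K}$ under the current hypotheses, the data-dependent lower bound $\|\Delta x_k^{IN}\|\geq\bar{\delta}>0$ used inside the proof of Theorem~\ref{Thm:1} is unavailable, so one must carefully exploit the algorithmic floor $\delta_{\min}$ together with norm monotonicity along the dogleg to show that $\Delta x_k$ eventually lands on the inexact-Newton segment (or coincides with $\Delta x_k^{IN}$) and never stalls on the vanishing Cauchy segment.
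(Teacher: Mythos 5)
Your overall architecture matches the paper's proof (use Theorem \ref{Thm:1} plus surjectivity to get $F(\bar{x})=0$, bound $\eta_k^{CP}$ away from $1$ via the identity in Lemma \ref{lem:etain2} and uniform positive definiteness, invoke Lemma \ref{lemma231} for $\eta_k^{IN}$, get a uniform bound $\tau_k\le\hat{\eta}<1$ along $\mathcal{K}$, then sum and use the recursion from Lemma \ref{lem:tauk}), but there is a genuine gap in your dogleg case analysis. In the branch $\|\Delta x_k^{IN}\|<\delta_{\min}$ you assert that ``the dogleg constraint forces $\Delta x_k=\Delta x_k^{IN}$.'' It does not: Step 3 of Algorithm \ref{dogleg1} only requires $\Delta x_k\in\widehat{\Gamma}^{DL}_k$ with $\min\{\delta_{\min},\|\Delta x_k^{IN}\|\}\le\|\Delta x_k\|\le\delta_k$, and nothing guarantees $\|\widehat{\Delta x}_k^{CP}\|<\|\Delta x_k^{IN}\|$ (the regularization $\sigma_k$ in \eqref{RNEWTONEQ} shrinks the inexact Newton step, so the Cauchy point may well be the longer of the two). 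Hence $\Delta x_k$ may legitimately sit on the segment between $0_{x_k}$ and $\widehat{\Delta x}_k^{CP}$ with $\|\Delta x_k\|\ge\|\Delta x_k^{IN}\|$, a case your dichotomy leaves unhandled; your stated goal that the iterate ``never stalls on the vanishing Cauchy segment'' is not something the algorithm enforces (it is forced only under the specific implementation of Procedure 5.1, which is not part of the algorithm the theorem is stated for).

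The paper closes exactly this case without excluding it: it shows the \emph{ratio} $\min\{\delta_{\min},\|\Delta x_k^{IN}\|\}/\|\widehat{\Delta x}_k^{CP}\|$ stays bounded below by a constant $\hat{\delta}>0$ along $\mathcal{K}$, because $\|\Delta x_k^{IN}\|\ge(1-\eta_{\max}^{IN})\|F(x_k)\|/\kappa_2$ (from $\eta_k^{IN}\to0$ and $\normmm{\rd F(x_k)}\le\kappa_2$, as in \eqref{DELTAIINNEWTON} and \eqref{deltakckinp}) while Lemma \ref{lem:dxkcp} gives $\|\widehat{\Delta x}_k^{CP}\|\le\lambda_{\min}^{-1/2}\big(\rd F(x_k)\circ(\rd F(x_k))^*\big)\|F(x_k)\|$, so both quantities scale like $\|F(x_k)\|$ and the ratio survives even though both tend to zero (see \eqref{etacpllbd}). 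Then monotonicity of the linear model along the Cauchy segment plus norm convexity yields $\tau_k\le 1-\hat{\delta}(1-\eta_{\max}^{CP})<1$ as in \eqref{CPDESCEST}, which is the bound your argument is missing for that branch. You already have all the needed ingredients (Lemmas \ref{lem:dxkcp} and \ref{lemma231} and the uniform spectral bounds), so the fix is to replace the ``$\Delta x_k=\Delta x_k^{IN}$ is forced'' claim by this ratio argument; as written, however, the proof is incomplete.
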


\begin{proof}
By Theorem \ref{Thm:1}, $\bar{x}$ is a stationary point of $f$. Thus, $\grad f(\bar{x}) = (\rd F(\bar{x}))^*[F(\bar{x})] = 0_{\bar{x}}$.  Since $\rd F(\bar{x})$ is surjective, we have  $F(\bar{x})=0$.
Let $\{x_k\}_{k\in \mathcal{K}}$ be a subsequence of $\{x_k\}$ converging to $\bar{x}$, i..e,
$\lim\limits_{k\to \infty, k\in \mathcal{K}} x_k = \bar{x}$.
By hypothesis,  $F$ is continuously differentiable and $\rd F(\bar{x})$  is surjective. Hence, there exists a constants $\kappa_2>0$ such that for all $k\in \mathcal{K}$ sufficiently large,
\BE\label{GINVERSE2}
\normmm{\rd F(x_{k})} \leq \kappa_2
\quad\mbox{and}\quad
 \lambda_{\min}\big(\rd F(x_k)\circ (\rd F(x_k))^*\big)\ge\frac{1}{2}\bar{\lambda}_{\min},
\EE
where $\kappa_2 \ge \sqrt{\frac{1}{2}\bar{\lambda}_{\min}}$
and $\bar{\lambda}_{\min}:=\lambda_{\min}(\rd F(\bar{x})\circ (\rd F(\bar{x}))^*)>0$.
From (\ref{DELTACPEST}) and (\ref{GINVERSE2}), we have for all $k\in \mathcal{K}$ sufficiently large,
\begin{eqnarray} \label{CP1}
\|\widehat{\Delta x}_k^{CP}\|
&\leq& \lambda_{\min}^{-\frac{1}{2}}\big(\rd F(x_k)\circ (\rd F(x_k))^*\big) \|F(x_k)\|
\leq  \frac{\|F(x_k)\|}{\sqrt{\frac{1}{2}\bar{\lambda}_{\min}}}
\leq  \frac{\|F(X_0)\|}{\sqrt{\frac{1}{2}\bar{\lambda}_{\min}}}.
\end{eqnarray}
By the definition of $\widehat{\Delta x}_k^{CP}$ in (\ref{CP}) we have for all $k\in \mathcal{K}$ sufficiently large,
\begin{eqnarray}\label{FDFNORMBD}
\|F(x_{k})+\rd F(x_{k})[\widehat{\Delta x}_k^{CP}]\|^2
&=& \|F(x_k)\|^2 - \frac{\|(\rd F(x_{k}))^{*}[F(x_{k})]\|^{4}}
{\|\rd F(x_{k})\circ(\rd F(x_{k}))^{*}[F(x_{k})]\|^{2}}\nonumber \\
&\leq&\|F(x_k)\|^2 -\frac{\|(\rd F(x_{k}))^{*}[F(x_{k})]\|^{2}}
{\normmm{\rd F(x_{k})}^2}.
\end{eqnarray}
In addition, it follows from \eqref{def:etacp}, (\ref{GINVERSE2}), and (\ref{FDFNORMBD}) that for all $k\in \mathcal{K}$ sufficiently large,
\begin{eqnarray}\label{etakkcp22}
\eta_{k}^{CP} &\leq& \ \sqrt{1-\frac{\|(\rd F(x_{k}))^{*}[F(x_{k})]\|^{2}}{\|F(x_{k})\|^{2}\normmm{\rd F(x_{k})}^{2}}}
\le  \sqrt{1-\frac{\langle F(x_{k}), \rd F(x_{k})\circ (\rd F(x_{k}))^{*}[F(x_{k})]\rangle}{\|F(x_{k})\|^{2}\normmm{\rd F(x_{k})}^{2}}} \nonumber \\
&\leq&   \sqrt{1 - \frac{\frac{1}{2}\bar{\lambda}_{\min}}{\kappa_2^2}}\equiv \eta_{\max}^{CP} < 1.
\end{eqnarray}

Using Lemma \ref{lemma231}, there exists a constant $0<\eta^{IN}_{\max}<1$ such that the first inequality of  (\ref{DELTAIINNEWTON}) holds for all $k\in \mathcal{K}$ sufficiently large.
This, together with (\ref{GINVERSE2}) and (\ref{CP1}), implies that  for all $k\in \mathcal{K}$ sufficiently large,
\BE\label{deltakckinp}
\frac{\|\Delta x_k^{IN}\|}{\|\widehat{\Delta x}_k^{CP}\|}
\geq\frac{\Big(1-\eta^{IN}_{\max}\Big)\sqrt{\frac{1}{2}\bar{\lambda}_{\min}}}{\normmm{\rd F(x_{k})}}
\geq  \frac{\Big(1-\eta^{IN}_{\max}\Big)\sqrt{\frac{1}{2}\bar{\lambda}_{\min}}}{\kappa_2}>0.
\EE
By (\ref{CP1}) and (\ref{deltakckinp}) we can obtain  for all $k\in \mathcal{K}$ sufficiently large,
\begin{eqnarray}\label{etacpllbd}
\frac{\min\{\delta_{\min},\|\Delta x_k^{IN}\|\}}{\|\widehat{\Delta x}_k^{CP}\|}
&=& \min\left\{\frac{\delta_{\min}}{\|\widehat{\Delta x}_k^{CP}\|},\frac{\|\Delta x_k^{IN}\|}{\|\widehat{\Delta x}_k^{CP}\|} \right\} \nonumber \\
&\ge&  \min\left\{\frac{\delta_{\min}\sqrt{\frac{1}{2}\bar{\lambda}_{\min}}}{\|F(X_0)\|},\frac{(1-\eta^{IN}_{\max})\sqrt{\frac{1}{2}\bar{\lambda}_{\min}}}{\kappa_2}\right\}
\ge \hat{\delta},
\end{eqnarray}
where  $\hat{\delta}\in (0,1)$ is a constant.

If $\Delta x_{k}$ lies on $\widehat{\Gamma}^{DL}_k$ between $\widehat{\Delta x}_k^{CP}$ and $\Delta x_{k}^{IN}$,
then there exists a constant $0<\eta_{\max}^{CP}<1$ such that  (\ref{DL1}) holds for all $k\in \mathcal{K}$ sufficiently large.
If $\Delta x_{k}$ lies on $\widehat{\Gamma}^{DL}_k$ between $0_{x_k}$ and $\widehat{\Delta x}_k^{CP}$,
then $\min\{\delta_{\min},\|\Delta x_k^{IN}\|\}\leq\|\Delta x_k\|\leq\|\widehat{\Delta x}_k^{CP}\|$.
We note that  the norm of the local linear model (\ref{llm}) is monotone decreasing along the segment of $\widehat{\Gamma}^{DL}_k$
between $0_{x_k}$ and $\widehat{\Delta x}_k^{CP}$. Then, for $\Delta x_{k}$ lying on $\widehat{\Gamma}^{DL}_k$ between $0_{x_k}$
and $\widehat{\Delta x}_k^{CP}$, it follows from norm convexity, (\ref{etakkcp22}) and (\ref{etacpllbd})
that for all $k\in \mathcal{K}$ sufficiently large,
\begin{eqnarray}\label{CPDESCEST}
& &\|F(x_{k})+\rd F(x_{k})[\Delta x_{k}]\| \nonumber \\
&\leq& \left\|F(x_{k})+\rd F(x_{k})\left[\frac{\min\{\delta_{\min},\|\Delta x_k^{IN}\|\}}
{\|\widehat{\Delta x}_k^{CP}\|}\widehat{\Delta x}_k^{CP}\right]\right\|   \nonumber \\
&\leq&\Big(1-\frac{\min\{\delta_{\min},\|\Delta x_k^{IN}\|\}}{\|\widehat{\Delta x}_k^{CP}\|}\Big)\|F(x_{k})\|
 + \frac{\min\{\delta_{\min},\|\Delta x_k^{IN}\|\}}{\|\widehat{\Delta x}_k^{CP}\|}\|F(x_{k})+\rd F(x_{k})[\widehat{\Delta x}_k^{CP}]\| \nonumber \\
&\leq&\Big(1-\min\Big\{\frac{\delta_{\min}}{\|\widehat{\Delta x}_k^{CP}\|},\frac{\|\Delta x_k^{IN}\|}
{\|\widehat{\Delta x}_k^{CP}\|}\Big\}(1-\eta_{\max}^{CP})\Big)\|F(x_{k})\| \nonumber \\
&\leq& \big(1-\hat{\delta}(1-\eta_{\max}^{CP})\big)\|F(x_{k})\|.
\end{eqnarray}
From (\ref{DL1}) and (\ref{CPDESCEST}) we obtain for all $k\in \mathcal{K}$ sufficiently large,
\[
\|F(x_{k})+\rd F(x_{k})[\Delta x_{k}]\|\leq\hat{\eta}\|F(x_{k})\|,
\]
where
\[
\hat{\eta}:=\max\left\{\eta_{\max}^{CP},\eta_{\max}^{IN}, 1- \hat{\delta}(1-\eta_{\max}^{CP}) \right\}.
\]
Therefore, for all $k\in \mathcal{K}$ sufficiently large,
\[
\frac{{\rm Pred}_{k}(\Delta x_k)}{\|F(x_{k})\|}=\frac{\|F(x_{k})\|-\|F(x_{k})+\rd F(x_{k})[\Delta x_{k}]\|}{\|F(x_{k})\|}
\geq (1-\hat{\eta})>0.
\]
This implies that $\sum_{k=0}^{\infty}\frac{{\rm Pred}_{k}(\Delta x_k)}{\|F(x_{k})\|}$ diverges.
This, together with  \eqref{def:tauk}, implies that $\sum_{k=0}^{\infty}(1-\tau_k)$ diverges.  It follows from Lemma \ref{lem:tauk} that \eqref{ineq:fxk1} holds and thus
$\lim_{k\to\infty}\|F(x_k)\|=0$. By using the  continuous differentiability of $F$ we have  $F(\bar{x})=0$. This completes the proof.
\end{proof}

To establish the convergence of the sequence $\{x_k\}$ generated by Algorithm  \ref{dogleg1}, we need the following assumption.
\begin{assumption} \label{ass:nons}
Suppose {\rm Algorithm {\rm \ref{dogleg1}}} does not break down and $\rd F(\bar{x}):T_{\bar{x}}\cm\to T_{F(\bar{x})}\ce$ is surjective,
where $\bar{x}\in\cm$ is an accumulation point of the sequence $\{x_k\}$ generated by {\rm Algorithm {\rm \ref{dogleg1}}}.
\end{assumption}

We note that the iterate $\Delta x_{k}$ lies on $\widehat{\Gamma}_{k}^{DL}$. Thus,
\[
\|\Delta x_{k}\|\le\max\{ \|\Delta x_k^{IN}\|,\|\widehat{\Delta x}_k^{CP}\|  \}.
\]

Based on Lemma \ref{lem:dxkin}, Lemma \ref{lem:dxkcp}, and Theorem \ref{thm:stauk}, following the similar proof of \cite[Theorem 2]{ZBJ18}, we have the following convergence result on Algorithm \ref{dogleg1}.
\begin{theorem}\label{thm:cov}
Suppose  Assumption {\rm \ref{ASSUM}} and Assumption {\rm \ref{ass:nons}} are satisfied. Let $\bar{x}\in\cm$  be an accumulation point of the sequence $\{x_k\}$ generated by Algorithm  {\rm \ref{dogleg1}}. Then the sequence $\{x_k\}$ converges to $\bar{x}$ and $F(\bar{x})=0$.
\end{theorem}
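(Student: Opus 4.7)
The plan is to combine Theorem \ref{thm:stauk} with a Cauchy-sequence argument. From Theorem \ref{thm:stauk} I immediately obtain $\lim_{k\to\infty}\|F(x_k)\|=0$, and continuous differentiability of $F$ then gives $F(\bar x)=0$. All that remains is to upgrade the subsequential convergence $\{x_k\}_{k\in\mathcal{K}}\to\bar x$ to convergence of the whole sequence, and this is where Assumption \ref{ass:nons} will be used in an essential way.

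The key intermediate step is to establish a uniform linear bound $\|\Delta x_k\|\le C\|F(x_k)\|$ on a neighborhood $\mathcal{U}$ of $\bar x$, together with a geometric decrease $\|F(x_{k+1})\|\le \rho\|F(x_k)\|$ on the same neighborhood for some $\rho<1$. For the step bound, since $\Delta x_k\in\widehat{\Gamma}_k^{DL}$, I have $\|\Delta x_k\|\le\max\{\|\Delta x_k^{IN}\|,\|\widehat{\Delta x}_k^{CP}\|\}$; applying Lemmas \ref{lem:dxkin} and \ref{lem:dxkcp} and using that surjectivity of $\rd F(\bar x)$ plus continuous differentiability makes both $\normmm{(\rd F(x_k))^\dagger}$ and $\lambda_{\min}^{-1/2}(\rd F(x_k)\circ(\rd F(x_k))^*)$ uniformly bounded on a sufficiently small $\mathcal{U}$, a single constant $C$ suffices. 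For the geometric decrease, the estimates already produced inside the proof of Theorem \ref{thm:stauk} yield a uniform bound $\tau_k\le\hat\eta<1$ whenever $x_k\in\mathcal{U}$, and Lemma \ref{lem:tauk} then gives $\|F(x_{k+1})\|\le\rho\|F(x_k)\|$ with $\rho:=1-t(1-\hat\eta)$.

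The final step is the Cauchy argument. Using that $\{x_k\}_{k\in\mathcal{K}}\to\bar x$ and $\|F(x_k)\|\to 0$, I choose $k_0\in\mathcal{K}$ so large that $\mathrm{dist}(x_{k_0},\bar x)<r/2$, $C\|F(x_{k_0})\|<\mu_\nu$, and $\frac{\nu C}{1-\rho}\|F(x_{k_0})\|<r/2$, where $r$ is the radius of $\mathcal{U}$ and $\mu_\nu$ is the threshold from Assumption \ref{ASSUM}(2). An induction then shows $x_j\in\mathcal{U}$ for all $j\ge k_0$: granted $x_{k_0},\ldots,x_j\in\mathcal{U}$, the step bound gives $\|\Delta x_j\|\le C\rho^{j-k_0}\|F(x_{k_0})\|<\mu_\nu$, so Assumption \ref{ASSUM}(2) yields $\mathrm{dist}(x_j,x_{j+1})\le\nu C\rho^{j-k_0}\|F(x_{k_0})\|$, and summing the geometric tail keeps $\mathrm{dist}(x_{j+1},\bar x)<r$. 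The same tail bound shows $\sum_j\mathrm{dist}(x_j,x_{j+1})<\infty$, so $\{x_k\}$ is Cauchy on $\cm$ and thus converges; because the subsequence along $\mathcal{K}$ already converges to $\bar x$, the limit must be $\bar x$. The main obstacle is precisely this bootstrap: the radius $r$, the step-size cap $\mu_\nu$, and the magnitude of $\|F(x_{k_0})\|$ must be coordinated so that one induction simultaneously keeps the iterates inside the neighborhood where the linear and geometric bounds hold and inside the retraction's controlled regime; once this coordination is set up, the rest follows from already-proved lemmas.
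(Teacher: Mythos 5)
Your proposal is correct and follows essentially the route the paper intends: the paper proves this theorem by citing the analogous argument of \cite[Theorem 2]{ZBJ18}, built exactly on the step bounds of Lemmas \ref{lem:dxkin} and \ref{lem:dxkcp} (via $\|\Delta x_k\|\le\max\{\|\Delta x_k^{IN}\|,\|\widehat{\Delta x}_k^{CP}\|\}$), the residual convergence of Theorem \ref{thm:stauk}, and the retraction bound in Assumption \ref{ASSUM} to sum a geometric series and conclude that $\{x_k\}$ is Cauchy. Your neighborhood bootstrap coordinating $r$, $\mu_\nu$, and $\|F(x_{k_0})\|$ is precisely the standard way that argument is carried out, so no gap remains.
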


Similar to the proof of \cite[Lemmas 4 and 5]{ZBJ18}, we have the following result on the procedure for determining $\Delta x_k$ in  Algorithm \ref{dogleg1}.
\begin{lemma}\label{lem:stepsize}
Suppose {\rm  Assumption \ref{ASSUM}} and {\rm Assumption \ref{ass:nons}} are satisfied.
Let $\bar{x}\in\cm$  be an accumulation point of the sequence $\{x_k\}$ generated by  {\rm  Algorithm \ref{dogleg1}}.
Then $\lim_{k\to\infty}\| \Delta x_k^{IN} \| = 0$ and $\Delta x_k^{IN}$ satisfies
the Ared/Pred condition {\rm (\ref{Ared/Predcondition})}
for all $k$ sufficiently large.
\end{lemma}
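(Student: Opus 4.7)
The plan is to handle the two assertions in sequence, both starting from the fact, guaranteed by Theorem \ref{thm:cov} under Assumption \ref{ass:nons}, that $\{x_k\}\to\bar{x}$ and $F(\bar{x})=0$.

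For $\lim_{k\to\infty}\|\Delta x_k^{IN}\|=0$, I would apply Lemma \ref{lem:dxkin}, which gives $\|\Delta x_k^{IN}\|\le (1+\eta_k)\normmm{(\rd F(x_k))^\dag}\,\|F(x_k)\|$. Since $\rd F(\bar{x})$ is surjective and $F$ is $C^1$, the map $x\mapsto\normmm{(\rd F(x))^\dag}$ is continuous at $\bar{x}$, hence uniformly bounded along the tail of $\{x_k\}$. Together with $\eta_k<1$ and $\|F(x_k)\|\to 0$ (from Theorem \ref{thm:stauk}), this yields $\|\Delta x_k^{IN}\|=O(\|F(x_k)\|)\to 0$.

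For the Ared/Pred condition I would perform the standard Taylor comparison between the retracted point and the linear model. Using $\rd(F\circ R_{x_k})(0_{x_k})=\rd F(x_k)$ together with $C^1$-smoothness of $F$ in a neighbourhood of $\bar{x}$, and controlling $\dist(x_k,R_{x_k}(\Delta x_k^{IN}))$ via Assumption \ref{ASSUM}(2), one can produce a constant $M>0$ such that, for all $k$ sufficiently large,
\[
\bigl\|F(R_{x_k}(\Delta x_k^{IN}))-F(x_k)-\rd F(x_k)[\Delta x_k^{IN}]\bigr\|\le M\|\Delta x_k^{IN}\|^2.
\]
The reverse triangle inequality applied to \eqref{def:ared} and \eqref{def:pred} then gives
\[
\bigl|{\rm Ared}_k(\Delta x_k^{IN})-{\rm Pred}_k(\Delta x_k^{IN})\bigr|\le M\|\Delta x_k^{IN}\|^2=O(\|F(x_k)\|^2).
\]
On the other hand, Lemma \ref{lemma231} supplies $\eta_k^{IN}\to 0$, so by \eqref{def:etain},
\[
{\rm Pred}_k(\Delta x_k^{IN})=(1-\eta_k^{IN})\|F(x_k)\|\ge \tfrac{1}{2}\|F(x_k)\|
\]
for all large $k$. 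Dividing the two estimates, ${\rm Ared}_k(\Delta x_k^{IN})/{\rm Pred}_k(\Delta x_k^{IN})=1-O(\|F(x_k)\|)\to 1$, which exceeds any fixed $t\in(0,1)$ once $k$ is large enough.

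The only genuinely technical point is producing the second-order bound above with a constant $M$ independent of $k$. Since $x_k\to\bar{x}$, the iterates eventually sit in a single coordinate chart around $\bar{x}$ on which uniform Lipschitz bounds on $\rd F$ and uniform second-order control of the retraction both hold; these combine to give the required estimate. This parallels the argument already carried out in \cite[Lemmas 4 and 5]{ZBJ18}, which the statement of the lemma explicitly invokes as the model.
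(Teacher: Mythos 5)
Your proof is correct, and the first half (boundedness of $\normmm{(\rd F(x_k))^\dag}$ near $\bar{x}$ via surjectivity, then Lemma \ref{lem:dxkin} and $\|F(x_k)\|\to 0$) is exactly the paper's argument. For the Ared/Pred condition, however, you take a genuinely different route. The paper stays at first order: it writes the Taylor remainder bound $\|\widehat{F}_{x_k}(\Delta x_k^{IN})-\widehat{F}_{x_k}(0_{x_k})-\rd \widehat{F}_{x_k}(0_{x_k})[\Delta x_k^{IN}]\|\le \epsilon_k\|\Delta x_k^{IN}\|$ with the tailor-made coefficient $\epsilon_k=\frac{(1-t)(1-\eta_k^{IN})}{4\normmm{(\rd F(x_k))^\dag}}$, which is legitimate for large $k$ because $\Delta x_k^{IN}\to 0$ makes the remainder $o(\|\Delta x_k^{IN}\|)$ while $\epsilon_k$ stays bounded away from zero; combined with $\|\Delta x_k^{IN}\|\le 4\normmm{(\rd F(\bar{x}))^\dag}\|F(x_k)\|$ this yields $\|F(R_{x_k}(\Delta x_k^{IN}))\|\le\big(1-t(1-\eta_k^{IN})\big)\|F(x_k)\|$, i.e.\ ${\rm Ared}_k\ge t\,{\rm Pred}_k$ exactly; it only needs $\eta_k^{IN}$ bounded away from $1$, not $\eta_k^{IN}\to 0$. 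You instead use a quadratic remainder bound $M\|\Delta x_k^{IN}\|^2$, the identity ${\rm Pred}_k(\Delta x_k^{IN})=(1-\eta_k^{IN})\|F(x_k)\|$, and Lemma \ref{lemma231} to conclude the stronger statement ${\rm Ared}_k/{\rm Pred}_k\to 1$; since $t<1$ this gives the lemma. The trade-off is regularity: your uniform constant $M$ requires a locally Lipschitz differential (equivalently second-order control of $F\circ R_x$ near $\bar{x}$), which is more than the stated Assumption \ref{ASSUM} ($F$ continuously differentiable), whereas the paper's first-order argument needs only $C^1$ with the uniformity coming from compactness of a neighbourhood of $(\bar{x},0)$. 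This is a mild caveat rather than a gap — the paper itself invokes the same $L_2\|\Delta x_k\|^2$ bound in the proof of Theorem \ref{th:qc}, and in the SNIEP application $\Phi$ is smooth — but if you want the lemma under the assumptions as literally stated, the paper's choice of $\epsilon_k$ is the cleaner device.
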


\begin{proof}
By assumption,  Assumptions {\rm \ref{ASSUM}} and {\rm \ref{ass:nons}} are satisfied. By Theorem \ref{thm:cov}, we know that $\lim_{k\to\infty}x_k = \bar{x}$ and $F(\bar{x})=0$.  By hypothesis,  $F$ is continuously differentiable and  $\mathrm{D}F(\bar{x})$ is surjective.  Then for all $k$ sufficiently large,
$\mathrm{D} F(x_k)$ is surjective and
\BE\label{est:df-eig}
\normmm{( \rd F(x_k))^\dag} \leq 2 \normmm{( \rd F(\bar{x}))^\dag}
\quad\mbox{and}\quad
 \lambda_{\min}\big(\rd F(x_k)\circ (\rd F(x_k))^*\big)\ge\frac{1}{2}\bar{\lambda}_{\min},
\EE
where $\bar{\lambda}_{\min}:=\lambda_{\min}(\rd F(\bar{x})\circ (\rd F(\bar{x}))^*)>0$.
By Lemma \ref{lem:dxkin} we have for all $k$ sufficiently large,
\begin{eqnarray}\label{bd:dxin}
\|\Delta x_k^{IN}\| &\le& (1+\eta_k)\normmm{(\rd F(x_k))^\dag}\cdot \|F(x_k)\| \nonumber \\
&\le& (1+\overline{\eta}_k) \normmm{(\rd F(x_k))^\dag}\cdot\|F(x_k)\|  \nonumber \\
&<& 4 \normmm{(\rd F(\bar{x}))^\dag}\cdot\|F(x_k)\|.
\end{eqnarray}
This, together with $\lim_{k\to\infty}\|F(x_k)\|=F(\bar{x})=0$, yields
\[
\lim_{k\to\infty}\| \Delta x_k^{IN} \| = 0.
\]

By hypothesis,  $F$ is continuously differentiable. Then, for all $k$ sufficiently large,
\[
\|F\big(R_{x_k}(\Delta x_k^{IN})\big) - F(x_k) - \mathrm{D} F(x_k) [\Delta x_k^{IN} ] \| \leq
\epsilon_k\| \Delta x_k^{IN} \|,
\]
i.e.,
\BE\label{eq:lips22}
\|\widehat{F}_{x_k}( \Delta x_k^{IN})-\widehat{F}_{x_k}(0_{x_k})- \mathrm{D}\widehat{F}_{x_k}(0_{x_k})[ \Delta x_k^{IN}] \| \leq \epsilon_k\|  \Delta x_k^{IN} \|,
\EE
where $\epsilon_k:= ((1-t)(1-\eta_k^{IN}))/( 4\normmm{ (\mathrm{D} F(x_k))^\dag})$.

Using  \eqref{def:etain} we have for all $k$ sufficiently large,
\begin{eqnarray}\label{GLINESEARCH1}
& &\|F(x_k) + \mathrm{D} F(x_k) [\Delta x_k^{IN}] \| = \eta_k^{IN} \| F(x_k) \|.
\end{eqnarray}
From  \eqref{bd:dxin}, (\ref{eq:lips22}), and \eqref{GLINESEARCH1} we have  for all $k$ sufficiently large,
\begin{eqnarray}\label{GLINESEARCH2}
& &\| F\big(R_{x_k}(\Delta x_k^{IN})\big)  \| = \| \widehat{F}_{x_k}(\Delta x_k^{IN}) \| \nonumber\\
&\leq& \| \widehat{F}_{x_k}(0_{x_k}) + \mathrm{D} \widehat{F}_{x_k}(0_{x_k}) [\Delta x_k^{IN}] \|
 + \|  \widehat{F}_{x_k}(\Delta  x_k^{IN}) - \widehat{F}_{x_k}(0_{x_k}) - \mathrm{D} \widehat{F}_{x_k}(0_{x_k}) [\Delta x_k^{IN}]\|
\nonumber\\
&\leq&  \eta_k^{IN} \|\widehat{F}_{x_k}(0_{x_k}) \| +\epsilon_k \| \Delta x_k^{IN}  \| \nonumber \\
&\leq&  \eta_k^{IN}  \| F(x_k) \| +4\epsilon_k \normmm{(\rd F(\bar{x}))^\dag}\cdot\|F(x_k)\|  \nonumber\\
&\leq&  \Big(\eta_k^{IN} +4\epsilon_k \normmm{(\rd F(\bar{x}))^\dag}\Big) \|F(x_k)\|  \nonumber\\
&=&  \Big( \eta_k^{IN}   + 4\frac{(1-t)(1-\eta_k^{IN})}{ 4\normmm{(\mathrm{D} F(x_k))^\dag}}
\normmm{(\mathrm{D} F(x_k))^\dag}\Big)\|F(x_k)\| \nonumber\\
&=& \big( \eta_k^{IN}  +  (1-t)(1-\eta_k^{IN}) \big) \|F(x_k)\| \nonumber\\
&=& \big( 1- t(1-\eta_k^{IN}) \big) \|F(x_k)\|.
\end{eqnarray}
Using \eqref{GLINESEARCH1} and  \eqref{GLINESEARCH2} we have
\begin{eqnarray*}
& &\|F(x_k)\| -\| F\big(R_{x_k}(\Delta x_k^{IN})\big)  \| \ge t(1-\eta_k^{IN}) \|F(x_k)\| \\
&=& t (\|F(x_k)\|-\eta_k^{IN} \|F(x_k)\|) \\
&=& t (\|F(x_k)\|-\|F(x_k) + \mathrm{D} F(x_k) [\Delta x_k^{IN}] \| ),
\end{eqnarray*}
which implies
\[
\frac{{\rm Ared}_{k}(\Delta x_k^{IN})}{{\rm Pred}_{k}(\Delta x_k^{IN})}
= \frac{\| F(x_k)\|-\| F(R_{x_k}(\Delta x_k^{IN}))\| }{ \| F(x_k)\|-\| F(x_k)+\rd F(x_k)[\Delta x_k^{IN}]\| } \geq t.
\]
The proof is complete.
\end{proof}

Finally, on the  quadratic convergence of Algorithm \ref{dogleg1}, we have the following result. This follows from the similar proof of \cite[Theorem 3]{ZBJ18} by using Lemma \ref{lem:stepsize}. Here, we give the proof for the sake of completeness.
\begin{theorem} \label{th:qc}
Suppose {\rm  Assumptions \ref{ASSUM}} and {\rm \ref{ass:nons}} are satisfied,
and $\Delta x_k = \Delta x_k^{IN}$ for all $k$ sufficiently large.
Let $\bar{x}\in\cm$  be an accumulation point of the sequence $\{x_k\}$ generated by  {\rm Algorithm \ref{dogleg1}}.
Then the sequence $\{x_k\}$ converges to $\bar{x}$ quadratically.

\end{theorem}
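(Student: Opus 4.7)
The plan is to combine the inexact Newton residual identity, a second-order Taylor expansion of $F\circ R_{x_k}$, and the surjectivity of $\rd F(\bar{x})$ to obtain a quadratic decay of $\|F(x_k)\|$, then convert this into a quadratic Riemannian-distance estimate. By Theorem~\ref{thm:cov} we already have $x_k\to\bar{x}$ and $F(\bar{x})=0$, and by hypothesis $\Delta x_k=\Delta x_k^{IN}$ with $x_{k+1}=R_{x_k}(\Delta x_k^{IN})$ for all $k$ sufficiently large.

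First I would sharpen the forcing-term bound. For large $k$ we have $\|F(x_k)\|<\sigma_{\max}$, so by \eqref{def:skek}, $\sigma_k=\|F(x_k)\|$ and $\eta_k\le\|F(x_k)\|$. Combined with Lemma~\ref{lem:etain} and the lower eigenvalue bound $\lambda_{\min}(\rd F(x_k)\circ(\rd F(x_k))^*)\ge \tfrac12\bar{\lambda}_{\min}$ from \eqref{est:df-eig}, this yields a constant $c_0>0$ with $\eta_k^{IN}\le c_0\|F(x_k)\|$, and therefore, by the definition of $\eta_k^{IN}$ in \eqref{def:etain},
\begin{equation*}
\|F(x_k)+\rd F(x_k)[\Delta x_k^{IN}]\|=\eta_k^{IN}\|F(x_k)\|\le c_0\|F(x_k)\|^2.
\end{equation*}

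Next I would estimate the retraction Taylor residual. Writing $\widehat{F}_{x_k}:=F\circ R_{x_k}$, local Lipschitz continuity of $\rd F$ near $\bar{x}$ (which follows from the $C^1$ hypothesis on $\Omega$ together with the local compactness argument used in \cite[Lemma~5]{ZBJ18}) supplies a constant $L>0$ such that
\begin{equation*}
\|\widehat{F}_{x_k}(\Delta x_k^{IN})-\widehat{F}_{x_k}(0_{x_k})-\rd\widehat{F}_{x_k}(0_{x_k})[\Delta x_k^{IN}]\|\le L\|\Delta x_k^{IN}\|^2
\end{equation*}
for all large $k$. Using Lemma~\ref{lem:dxkin} together with \eqref{est:df-eig} to bound $\|\Delta x_k^{IN}\|\le 4\normmm{(\rd F(\bar{x}))^\dag}\|F(x_k)\|$, and applying the triangle inequality to the two previous displays, I obtain a constant $c_1>0$ such that
\begin{equation*}
\|F(x_{k+1})\|=\|\widehat{F}_{x_k}(\Delta x_k^{IN})\|\le c_1\|F(x_k)\|^2
\end{equation*}
for all large $k$.

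Finally I would transfer this quadratic decay from $\|F(\cdot)\|$ to Riemannian distance. Assumption~\ref{ASSUM}(2) gives $\dist(x_j,x_{j+1})\le\nu\|\Delta x_j^{IN}\|\le 4\nu\normmm{(\rd F(\bar{x}))^\dag}\|F(x_j)\|$ for large $j$, while the quadratic bound above makes $\sum_{j\ge k}\|F(x_j)\|$ the tail of a quadratically convergent series dominated by a multiple of $\|F(x_k)\|$. Telescoping and passing to the limit then yield $\dist(x_k,\bar{x})\le c_2\|F(x_k)\|$ for some $c_2>0$ and all large $k$. Conversely, since $F$ is $C^1$ with $F(\bar{x})=0$, there is $c_3>0$ with $\|F(x_k)\|\le c_3\dist(x_k,\bar{x})$ locally. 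Chaining the three inequalities,
\begin{equation*}
\dist(x_{k+1},\bar{x})\le c_2\|F(x_{k+1})\|\le c_1c_2\|F(x_k)\|^2\le c_1c_2c_3^2\dist(x_k,\bar{x})^2,
\end{equation*}
which is the desired quadratic rate. The main obstacle is the second-order retraction estimate: Assumption~\ref{ASSUM} guarantees only $C^1$ smoothness of $F$ on $\Omega$, which is just short of what is needed for a clean $O(\|\Delta x_k^{IN}\|^2)$ Taylor remainder of $\widehat{F}_{x_k}$ at $0_{x_k}$. I would handle this by extracting local Lipschitzness of $\rd F$ on a compact neighborhood of $\bar{x}$ exactly as in the proof of \cite[Lemma~5, Theorem~3]{ZBJ18}; once this uniform bound is in place, the remaining steps are routine.
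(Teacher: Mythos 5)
Your proposal is correct and follows essentially the same route as the paper's proof: both sharpen the forcing term via Lemma~\ref{lem:etain} with $\sigma_k,\eta_k\le\|F(x_k)\|$, combine the second-order remainder bound for $\widehat{F}_{x_k}$ with Lemma~\ref{lem:dxkin} and \eqref{est:df-eig} to obtain a quadratic bound on $\|F(x_{k+1})\|$, and telescope distances through Assumption~\ref{ASSUM}(2) to convert this into a quadratic rate for $\dist(x_k,\bar{x})$. Your reorganization (the direct recursion $\|F(x_{k+1})\|\le c_1\|F(x_k)\|^2$ plus the two-sided equivalence between $\|F(x_k)\|$ and $\dist(x_k,\bar{x})$, with the tail summed using the quadratic rather than the geometric decay) is only a cosmetic variant of the paper's chain, and the $C^1$-versus-Lipschitz-differential issue you flag affects the paper's argument in exactly the same way, since it too invokes only continuous differentiability to justify the $L_2\|\Delta x_k\|^2$ remainder in \eqref{eq:lip4}.
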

\begin{proof}
Since  Assumptions {\rm \ref{ASSUM}} and {\rm \ref{ass:nons}} are satisfied, it follows from Theorem \ref{thm:cov} and Lemma \ref{lem:stepsize} that $\lim_{k\to\infty}x_k = \bar{x}$, $F(\bar{x})=0$, $\Delta x_k = \Delta x_k^{IN}$  for all $k$ sufficiently large, and
\[
\lim_{k\to\infty}\|\Delta x_k\| =\lim_{k\to\infty} \|\Delta x_k^{IN}\| =0.
\]
Moreover, $\mathrm{D}F(\bar{x})$ is surjective and for all $k$ sufficiently large, $\mathrm{D} F(x_k)$ is surjective with  \eqref{est:df-eig} being satisfied.
By using the continuous differentiability of $F$,  there exist two constants $L_1,L_2>0$ such that for all $k$ sufficiently large,
\BE\label{eq:lip4}
\left\{
\begin{array}{l}
\| F(x_k) \|=\| F(X_k) - F(\bar{x}) \| \leq L_1 \mbox{dist}(x_k, \bar{x}), \\[2mm]
\| \widehat{F}_{x_k}(\Delta x_k) - \widehat{F}_{x_k}(0_{x_k}) - \mathrm{D} \widehat{F}_{x_k}(0_{x_k}) [\Delta x_k] \| \leq L_2 \| \Delta x_k\|^2,\\[2mm]
\mbox{dist}\big(x_k, R_{x_k}(\Delta x_k) \big)\le\nu\|\Delta x_k\|,
\end{array}
\right.
\EE
where $\nu$ is the constant given in Assumption {\rm \ref{ASSUM}}. From Lemma \ref{lem:etain}, \eqref{def:skek},
\eqref{GLINESEARCH1}, and (\ref{eq:lip4}), we have  for all $k$ sufficiently large,
\begin{eqnarray}\label{upbd:etakin}
\eta^{IN}_k &\leq&  \frac{\sigma_k}{\sigma_k + \lambda_{\min}\big(\rd F(x_k)\circ (\rd F(x_k))^*\big) } + \eta_k \nonumber\\
&\leq& \displaystyle \frac{1}{\frac{1}{2}\overline{\la}_{\min}+\sigma_k}\sigma_k + \eta_k
\le \displaystyle \frac{2}{\overline{\la}_{\min}}\|F(x_{k})\| + \|F(x_{k})\| \nonumber\\
&\leq& \displaystyle \frac{2+\overline{\la}_{\min}}{\overline{\la}_{\min}}L_1\mbox{dist}(x_k, \bar{x})
\equiv c_1 \dist(x_k, \bar{x}),
\end{eqnarray}
where $c_1 := (L_1(2+\overline{\la}_{\min}))/ \overline{\la}_{\min}$.

Using  \eqref{bd:dxin}, \eqref{GLINESEARCH1}, (\ref{eq:lip4}),
and \eqref{upbd:etakin}, we have  for all $k$ sufficiently large,
\begin{eqnarray}\label{upbd:fxk}
& &\| F(x_{k+1})  \| = \| \widehat{F}_{x_k}(\Delta x_k) \| \nonumber\\
&\leq& \| \widehat{F}_{x_k}(0_{x_k}) + \mathrm{D} \widehat{F}_{x_k}(0_{x_k}) [\Delta x_k] \|
 + \|  \widehat{F}_{x_k}(\Delta  x_k) - \widehat{F}_{x_k}(0_{x_k}) - \mathrm{D} \widehat{F}_{x_k}(0_{x_k}) [\Delta x_k]\|
\nonumber\\
&\leq&  \eta_k^{IN} \| F(x_k) \| +L_2 \| \Delta x_k  \|^2 \nonumber \\
&\leq&  c_1L_1  \dist^2(x_k, \bar{x}) + 16 L_2\normmm{(\rd F(\bar{x}))^\dag}^2\cdot\|F(x_k)\|^2  \nonumber\\
&\leq&  \big(c_1L_1+16 L_1^2L_2\normmm{(\rd F(\bar{x}))^\dag}^2\big) \cdot  \dist^2(x_k, \bar{x})\nonumber\\
&\equiv& c_2\dist^2(x_k, \bar{x}),
\end{eqnarray}
where $c_2:=c_1L_1+16 L_1^2L_2\normmm{(\rd F(\bar{x}))^\dag}^2$.
If follows from \eqref{upbd:etakin} that there exists a constant $\eta_{\max}\in (0,1)$ such that for all $k$ sufficiently large,
\BE\label{etaset3}
\eta_k^{IN} \leq \eta_{\max}.
\EE
From \eqref{bd:dxin}, \eqref{GLINESEARCH2}, \eqref{upbd:fxk}, and \eqref{etaset3},
we have  for all $k$ sufficiently large,
\begin{eqnarray*}
\dist(x_{k+1}, \bar{x}) &\le & \sum^{\infty}_{j=k+1} \mbox{dist} (x_j, x_{j+1})
= \sum^{\infty}_{j=k+1} \mbox{dist} \big(x_j, R_{x_j}( \Delta x_j) \big)\\
&\leq&  \sum^{\infty}_{j=k+1} \nu \| \Delta x_j \|
\leq \sum^{\infty}_{j=k+1} 4\nu \normmm{(\rd F(\bar{x}))^\dag}\cdot\|F(x_j)\| \\
&=& 4\nu \normmm{(\rd F(\bar{x}))^\dag}
\sum^{\infty}_{j=0}\big( 1- t(1-\eta_k^{IN}) \big)^j \|F(x_{k+1})\| \\
&\leq&  4\nu \normmm{(\rd F(\bar{x}))^\dag}
\sum^{\infty}_{j=0}\big(1-t(1-\eta_{\max}) \big)^j \|F(x_{k+1})\| \\
&=& \frac{4\nu \normmm{(\rd F(\bar{x}))^\dag}}
{t(1-\eta_{\max})} \|F(x_{k+1})\|\\
&\le& c_2\frac{4\nu \normmm{(\rd F(\bar{x}))^\dag}}{t(1-\eta_{\max})}\dist^2(x_k, \bar{x}).
\end{eqnarray*}
This completes the proof.
\end{proof}

\begin{remark}
Let $\bar{x}\in\cm$  be an accumulation point of the sequence $\{x_k\}$ generated by  {\rm Algorithm \ref{dogleg1}}.
By {\rm Lemma \ref{lem:stepsize}} and the condition that $\delta_k \geq \delta_{\min}$,
if {\rm  Assumptions \ref{ASSUM} and \ref{ass:nons}} are satisfied,
then  $\Delta x_k^{IN}$ is a point contained in $\{\xi \in T_{x_k}\cm\; |\;\|\xi\|\le\delta_{\min}\}\subset \{\xi \in T_{x_k}\cm\; |\;\|\xi\|\le\delta_k\}$,
which also satisfies the Ared/Pred condition {\rm (\ref{Ared/Predcondition})}
for all $k$ sufficiently large.
Thus, if $\Delta x_k^{IN}$ is first tested for determining $\Delta x_k$ in {\rm Step 3}
of {\rm Algorithm \ref{dogleg1}}, then $x_{k+1}=R_{x_k}(\Delta x_k^{IN})$ for all $k$ sufficiently large.
Based on {\rm Theorem \ref{th:qc}}, the sequence $\{x_k\}$ converges to
$\bar{x}\in \cm$ quadratically.
\end{remark}

\section{Application in the SIEP } \label{sec4}

In this section, we apply the Riemannian inexact Newton dogleg method (Algorithm \ref{dogleg1}) to the SNIEP (\ref{RNEWEQ1}).
We also discuss the corresponding surjectivity  condition. Finally,  we study the associated preconditioning technique
for the SNIEP.

\subsection{Geometric properties}\label{sec41}
To apply Algorithm \ref{dogleg1} to solving the SNIEP (\ref{RNEWEQ1}), we need to derive the basic geometric properties of the product manifold $\SRn \times \co(n)$ and   the differential of $\Phi$ defined in (\ref{RNEWEQ1}).

We note that the tangent space of $\SRn \times \co(n)$ at a point $(S,Q)\in\SRn \times \co(n)$ is given by (see \cite[p. 42]{AMS08})
\[
\begin{array}{c}
T_{(S,Q)}\big(\SRn \times \co(n)\big)=  \{ (H, Q\Omega) \; | \; H^T = H, \Omega^T = -\Omega, \; H, \Omega \in \Rnn \}.
\end{array}
\]
Since $\SRn \times \co(n)$ is an embedded submanifold of $\SRn \times \Rnn$,
we can equip $\SRn \times \co(n)$ with
the following induced Riemannian metric:
\BE\label{METRIC2}
g_{(S,Q)}\big( (\xi_1, \eta_1), (\xi_2, \eta_2) \big):= \tr(\xi_1^T\xi_2) + \tr(\eta_1^T\eta_2),
\EE
for all $(S,Q) \in \SRn \times \co(n)$, $(\xi_1, \eta_1), (\xi_2, \eta_2) \in  T_{(S,Q)}(\SRn \times \co(n))$. Without causing any confusion, we still use $\langle \cdot, \cdot\rangle$ and $\|\cdot\|$ to denote the Riemannian metric on $\SRn \times \co(n)$ and its induced norm. Then the orthogonal projection of any $(\xi,\eta)\in\SRn \times \Rnn$ onto $T_{(S,Q)}(\SRn \times \co(n))$
is given by
\[
\Pi_{(S,Q)} (\xi, \eta) = \big( \xi, Q {\rm skew} (Q^T\eta) \big),
\]
where ${\rm skew}(A):= \frac{1}{2}(A-A^T)$.
A retraction on $\SRn \times \co(n)$ can be chosen as \cite[p.58]{AMS08}:
\BE\label{retr:sc}
R_{(S,Q)}(\xi_S, \eta_Q) = \big(S+\xi_S, {\rm qf} (Q + \eta_Q)\big), \quad {\rm for} \;
(\xi_S, \eta_Q) \in  T_{(S,Q)}\big(\SRn \times \co(n)\big),
\EE
where ${\rm qf}(A)$ denotes the $Q$ factor of an invertible matrix $A\in \mathbb{R}^{n\times n}$ as $A=\widehat{Q}\widehat{R}$,
where $\widehat{Q}$ belongs to $\co(n)$ and $\widehat{R}$ is an upper triangular matrix with strictly positive diagonal elements.

It is easy to verify that the differential $\rd \Phi(S,Q): T_{(S,Q)}(\SRn \times \co(n)) \to T_{\Phi(S,Q)}\SRn$ of $\Phi$ at a point $(S,Q)\in \SRn \times \co(n)$ is determined by
\BE\label{eq:NIEPDF1}
\rd \Phi(S,Q)[ (\Delta S,\Delta Q) ]= 2S\odot \Delta S + [ Q\Lambda Q^T ,\Delta QQ^T ],
\EE
for all $(\Delta S,\Delta Q) \in T_{(S,Q)}(\SRn \times \co(n)$. For any $Z\in\SRn$, we have $T_Z\SRn$ identifies $\SRn$ (i.e., $T_Z\SRn\simeq \SRn$).  Then,  $T_Z\SRn$ can be endowed  with the standard inner product on $\SRn$:
\BE\label{eq:SR-ip}
\langle \xi_Z,\eta_Z \rangle_F=\tr(\xi_Z^T\eta_Z), \quad\forall \xi_Z,\eta_Z\in T_Z\SRn
\EE
and its induced norm $\|\cdot\|_F$. Thus, with respect to the Riemannian metrics  (\ref{METRIC2}) and \eqref{eq:SR-ip}, the adjoint operator $(\rd \Phi(S,Q))^*: T_{\Phi(S,Q)}\SRn \to T_{(S,Q)}(\SRn \times \co(n)) $ of $\rd \Phi(S,Q)$ is determined by
\BE\label{eq:SNIEPDF2}
(\rd \Phi(S,Q))^*[\Delta Z] =\big(2S\odot \Delta Z ,  [ Q\Lambda Q^T , \Delta Z ]Q \big),
\quad \forall \Delta Z\in T_{\Phi(S,Q)}\SRn.
\EE

Based on the above analysis, we can use Algorithm \ref{dogleg1} to solving the SNIEP (\ref{RNEWEQ1}). On the convergence analysis of Algorithm \ref{dogleg1} for the SNIEP (\ref{RNEWEQ1}), we have the following remark.

\begin{remark}
The mapping $\Phi:\SRn \times \co(n)\to\SRn$ defined in {\rm (\ref{RNEWEQ1})}  satisfies the first conditions  of {\rm Assumption \ref{ASSUM}} since $\Phi$ is  a smooth mapping. The retraction $R$ defined by  \eqref{retr:sc} satisfies the second condition of {\rm Assumption \ref{ASSUM}} since $\co(n)$ is compact  {\rm \cite[p.149]{AMS08}} and $\SRn$ is a linear manifold. Thus, for the  {\rm SNIEP} {\rm (\ref{RNEWEQ1})},  {\rm Assumption \ref{ASSUM}} is satisfied.
\end{remark}

\subsection{Surjectivity condition}\label{sec32}
Let $(\overline{S},\overline{Q})\in \SRn \times \co(n)$ be an accumulation point of the sequence $\{(S_k,Q_k)\}$ generated by Algorithm \ref{dogleg1}  for solving the SNIEP (\ref{RNEWEQ1}).
To guarantee the the global and quadratic convergence of Algorithm \ref{dogleg1} for the SNIEP (\ref{RNEWEQ1}), we discuss the surjectivity condition of the differential $\rd \Phi(\cdot)$ at  $(\overline{S},\overline{Q})$.

Since $T_{\Phi(\overline{S},\overline{Q})}\SRn={\rm im}\big(\rd  \Phi(\overline{S},\overline{Q})\big) \oplus {\rm ker}\big( (\rd \Phi(\overline{S},\overline{Q}))^*  \big)$, the differential
$\rd  \Phi(\overline{S},\overline{Q})$ is surjective if and only if ${\rm ker}\big( (\rd \Phi(\overline{S},\overline{Q}))^*  \big) = \{{\bf 0}_{n\times n}\}$. This, together with \eqref{eq:SNIEPDF2}, implies that $\rd  \Phi(\overline{S},\overline{Q})$ is surjective if and only if the following linear matrix equation
\BE\label{eq:matrixeq1}
\left\{
\begin{array}{l}
\overline{S}\odot \Delta Z = \mathbf{0}_{n\times n},\\[2mm]
\overline{Q}\Lambda \overline{Q}^T\Delta Z - \Delta Z\overline{Q}\Lambda \overline{Q}^T = \mathbf{0}_{n\times n}
\end{array}
\right.
\EE
has  a unique solution $\Delta Z=\mathbf{0}_{n\times n}$. We note that there exists a unique linear transformation matrix $G\in\R^{n^2\times (n(n+1)/2)}$ such that
\BE\label{mat:G}
\ve(Z)=G\veh(Z),\quad\forall Z\in\SRn,
\EE
where $G$ is full column rank \cite{HS79}.
Then the matrix equation (\ref{eq:matrixeq1}) has  a unique solution $\Delta Z=\mathbf{0}_{n\times n}$ if and only if the following linear  equation
\BE\label{eq:matrixeq2}
\left\{
\begin{array}{l}
\mathrm{diag}(\mathrm{vec}(\overline{S})) G\Delta \bz = \mathbf{0}_{n^2},\\[2mm]
(\overline{Q} \otimes \overline{Q}) ( I_{n} \otimes \Lambda - \Lambda \otimes I_{n}) ( \overline{Q} \otimes \overline{Q})^T G\Delta \bz= \mathbf{0}_{n^2}.
\end{array}
\right.
\EE
has a unique solution $\Delta \bz= \mathbf{0}_{n(n+1)/2}\in\R^{n(n+1)/2}$, where $\mathbf{0}_{n}$ means the zero $n$-vector.

Therefore, we have the following result on the surjectivity of  $\rd  \Phi(\overline{S},\overline{Q})$.
\begin{theorem}\label{thm:sc}
Let $(\overline{S},\overline{Q})\in \SRn \times \co(n)$   be an accumulation point of the sequence $\{(S_k,Q_k)\}$ generated by
{\rm Algorithm  {\rm\ref{dogleg1}}} for solving the {\rm SNIEP \eqref{RNEWEQ1}}. Then the linear operator
$\rd  \Phi(\overline{S},\overline{Q})$ is surjective if and only if
\[
{\rm null}\left(\left[
\begin{array}{c}
\mathrm{diag}(\mathrm{vec}(\overline{S}))\\[2mm]
(\overline{Q}\otimes \overline{Q}) ( I_{n} \otimes \Lambda - \Lambda \otimes I_{n}) ( \overline{Q} \otimes \overline{Q})^T
\end{array}
\right]G\right)=\{\mathbf{0}_{n^2}\},
\]
where  $G\in\R^{n^2\times (n(n+1)/2)}$  is the linear transformation matrix defined by  \eqref{mat:G}.
\end{theorem}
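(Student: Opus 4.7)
My plan is to follow the chain of equivalences already set up in Subsection \ref{sec32} and simply wrap the final bookkeeping into a single matrix null-space condition.

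First, I would invoke the standard orthogonal decomposition $T_{\Phi(\overline{S},\overline{Q})}\SRn = {\rm im}(\rd\Phi(\overline{S},\overline{Q})) \oplus {\rm ker}((\rd\Phi(\overline{S},\overline{Q}))^*)$ to reduce surjectivity of $\rd\Phi(\overline{S},\overline{Q})$ to the statement that ${\rm ker}((\rd\Phi(\overline{S},\overline{Q}))^*) = \{\mathbf{0}_{n\times n}\}$. Applying the explicit formula \eqref{eq:SNIEPDF2} for the adjoint, together with the fact that $[\overline{Q}\Lambda\overline{Q}^T,\Delta Z]\overline{Q}=\mathbf{0}_{n\times n}$ is equivalent to $[\overline{Q}\Lambda\overline{Q}^T,\Delta Z]=\mathbf{0}_{n\times n}$ (since $\overline{Q}\in\co(n)$ is invertible), this kernel condition becomes precisely the coupled system \eqref{eq:matrixeq1}.

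Second, I would vectorize. For the first equation in \eqref{eq:matrixeq1}, $\overline{S}\odot \Delta Z = \mathbf{0}_{n\times n}$ rewrites as $\diag(\ve(\overline{S}))\ve(\Delta Z)=\mathbf{0}_{n^2}$ via the standard identity $\ve(A\odot B) = \diag(\ve(A))\ve(B)$. For the second equation, the Lie-bracket $\overline{Q}\Lambda\overline{Q}^T \Delta Z - \Delta Z \overline{Q}\Lambda\overline{Q}^T$ vectorizes to $(I_n\otimes \overline{Q}\Lambda\overline{Q}^T - \overline{Q}\Lambda\overline{Q}^T\otimes I_n)\ve(\Delta Z)$, and factoring out $(\overline{Q}\otimes\overline{Q})$ and $(\overline{Q}\otimes\overline{Q})^T$ using $\overline{Q}\overline{Q}^T = I_n$ yields $(\overline{Q}\otimes\overline{Q})(I_n\otimes\Lambda-\Lambda\otimes I_n)(\overline{Q}\otimes\overline{Q})^T\ve(\Delta Z)=\mathbf{0}_{n^2}$. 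Substituting the symmetric parametrization $\ve(\Delta Z) = G\veh(\Delta Z) =: G\Delta\bz$ from \eqref{mat:G} then produces \eqref{eq:matrixeq2}.

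Third, I would stack the two equations in \eqref{eq:matrixeq2} into the single block linear system
\[
\begin{bmatrix} \diag(\ve(\overline{S}))\\[1mm] (\overline{Q}\otimes\overline{Q})(I_n\otimes\Lambda-\Lambda\otimes I_n)(\overline{Q}\otimes\overline{Q})^T\end{bmatrix}G\,\Delta\bz = \mathbf{0}_{2n^2},
\]
and note that $G$ has full column rank, so the only solution is $\Delta\bz=\mathbf{0}_{n(n+1)/2}$ (equivalently $\Delta Z=\mathbf{0}_{n\times n}$) if and only if the kernel of this block matrix times $G$ is trivial, which is the claimed condition.

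The only nontrivial step is the second one, verifying the two vectorization identities, in particular the passage from the bracket equation to its Kronecker form and the insertion of $(\overline{Q}\otimes\overline{Q})(\overline{Q}\otimes\overline{Q})^T = I_{n^2}$ to expose the diagonal pencil $I_n\otimes\Lambda-\Lambda\otimes I_n$; everything else is bookkeeping.
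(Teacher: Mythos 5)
Your proposal is correct and follows essentially the same route as the paper: reduce surjectivity to triviality of $\ker\big((\rd\Phi(\overline{S},\overline{Q}))^*\big)$ via the orthogonal decomposition, pass to the coupled matrix system \eqref{eq:matrixeq1} (dropping the harmless factor $2$ and the right factor $\overline{Q}$ by invertibility), vectorize with the standard Hadamard and Kronecker identities, and use the full column rank of $G$ to restrict to the symmetric parametrization, yielding \eqref{eq:matrixeq2} and the stated null-space condition. The only difference is that you spell out the vectorization identities the paper takes for granted, which is fine.
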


On Theorem \ref{thm:sc}, we have the following remark.
\begin{remark}
Let
\[
J_{\overline{S}}:= \mathrm{diag}(\mathrm{vec}(\overline{S}))\quad\mbox{and}\quad
J_{\overline{Q}}:=(\overline{Q}\otimes \overline{Q}) ( I_{n} \otimes \Lambda - \Lambda \otimes I_{n}) ( \overline{Q} \otimes \overline{Q})^T
\]
and
\[
J_{(\overline{S},\overline{Q})}:=
\left[
\begin{array}{l}
J_{\overline{S}}\\[2mm]
J_{\overline{Q}}
\end{array}
\right].
\]
We note that
\[
\left\{
\begin{array}{l}
{\rm rank}(J_{\overline{S}}) =\mbox{number of nonzero elements of $\overline{S}$},\\[2mm]
{\rm rank}(J_{\overline{Q}}) = {\rm rank}( I_{n} \otimes \Lambda - \Lambda \otimes I_{n})
= n^2 - \sum\limits_{i=1}^n c_i,
\end{array}
\right.
\]
where $c_i$ is the multiplicity of $\lambda_i$ for $i=1,\ldots,n$. By {\rm Theorem {\rm\ref{thm:sc}}} and the fact that $G$ is full column rank, $\rd \Phi(\overline{S},\overline{Q})$ is surjective if and only if $J_{(\overline{S},\overline{Q})}$ is of full column rank. Specially, if the matrix $\overline{S}$ contains no zero elements, then the matrix $J_{\overline{S}}$ is full column rank and thus $J_{(\overline{S},\overline{Q})}G$ is full column rank.
\end{remark}

\subsection{Preconditioning technique} \label{sec33}
In this subsection, we consider the preconditioning technique   for solving the SNIEP (\ref{RNEWEQ1}) via Algorithm \ref{dogleg1}. When applying Algorithm \ref{dogleg1} to the SNIEP (\ref{RNEWEQ1}), we need to solve the following normal equation
\BE\label{NE:SNIEP}
\big(\rd \Phi(S_k,Q_k)\circ(\rd \Phi(S_k,Q_k))^* +\sigma_k \mathrm{id}_{T_{\Phi(S_k,Q_k)}\SRn}\big)
[ \Delta Z_k ] = -\Phi(S_k,Q_k)
\EE
for $\Delta Z_k\in T_{\Phi(S_k,Q_k)}\SRn$. To accelerate the convergence of the CG  method for solving \eqref{NE:SNIEP}, we solve the following left preconditioned linear equation
\BE\label{eq:PRESNPNEW1}
\begin{array}{l}
M_k^{-1}\circ \big(\rd \Phi(S_k,Q_k)\circ(\rd \Phi(S_k,Q_k))^* +\sigma_k \mathrm{id}_{T_{\Phi(S_k,Q_k)}\SRn}\big)
[ \Delta Z_k ] = -M_k^{-1}[\Phi(S_k,Q_k)],
\end{array}
\EE
where the preconditioner $M_k : T_{\Phi(S_k,Q_k)}\SRn \to T_{\Phi(S_k,Q_k)}\SRn$ is a self-adjoint and positive definite linear operator.

In the following, we construct an effective  preconditioner $M_k$. From \eqref{eq:NIEPDF1} and \eqref{eq:SNIEPDF2} we have,
for $\Delta Z_k\in T_{\Phi(S_k,Q_k)}\SRn$,
\begin{eqnarray}\label{eq:SNPNEW1}
H_k[ \Delta Z_k ] &:=& (\rd \Phi(S_k,Q_k)\circ (\rd \Phi(S_k,Q_k))^* + \sigma_k \mathrm{id}_{T_{\Phi(S_k,Q_k)} \SRn}) [ \Delta Z_k ] \nonumber\\
&=& 4S_k\odot S_k\odot \Delta Z_k + \big[ Q_k\Lambda Q_k^T, [ Q_k\Lambda Q_k^T, \Delta Z_k] \big] + \sigma_k \Delta Z_k.
\end{eqnarray}
Using \eqref{eq:SNPNEW1} we have
\[
\ve(H_k[\Delta Z]) = \widehat{H}_k \ve(\Delta Z),
\quad  \forall \Delta Z\in T_{\Phi(S_k,Q_k)}\SRn,
\]
where
\begin{eqnarray*}
\widehat{H}_k =  4\diag(\ve(S_k\odot S_k)) +  (Q_k \otimes Q_k)\big( (I_{n} \otimes \Lambda -\Lambda \otimes I_{n})^2 + \sigma_kI_{n^2}\big)(Q_k \otimes Q_k)^T.
\end{eqnarray*}

Then we can construct a preconditioner $M_k$ such that
\BE\label{prec:mk}
M_k[\Delta Z]: = (s_k + \sigma_k)\Delta Z + \big[ Q_k\Lambda Q_k^T, [ Q_k\Lambda Q_k^T, \Delta Z] \big],
\quad \forall \Delta Z\in T_{\Phi(S_k,Q_k)}\SRn,
\EE
where $s_k: = \max\{ 4(S_k\odot S_k)_{ij}, i,j=1,\ldots,n\}$.
Using \eqref{prec:mk} we obtain
\[
\ve(M_k[\Delta Z]) = \widehat{M}_k \ve(\Delta Z),
\quad  \forall \Delta Z\in T_{\Phi(S_k,Q_k)}\SRn,
\]
where
\begin{eqnarray*}
\widehat{M}_k =(Q_k \otimes Q_k)\big( (I_{n} \otimes \Lambda -\Lambda \otimes I_{n})^2 + (s_k + \sigma_k)I_{n^2}\big)(Q_k \otimes Q_k)^T.
\end{eqnarray*}
To compute $M_k^{-1}[\Delta Z]$ for all $\Delta Z\in T_{\Phi(S_k,Q_k)}\SRn$,
we note that the matrix $\widehat{M}_k$ is real symmetric and positive definite
and its inverse is given by
\[
\widehat{M}_k^{-1} = (Q_k \otimes Q_k)\big( (I_{n} \otimes \Lambda - \Lambda \otimes I_{n})^2
+(\sigma_k + \overline{s}_k )I_{n^2} \big)^{-1}(Q_k \otimes Q_k)^T,
\]
which can be computed readily. Thus,
\[
M_k^{-1}[\Delta Z]= \ve^{-1}\Big(\widehat{M}_k^{-1} \ve(\Delta Z) \Big),
\quad  \forall \Delta Z\in T_{\Phi(S_k,Q_k)}\SRn.
\]
is available readily since the matrix-vector product $\widehat{M}_k^{-1} \ve(\Delta Z)$ can be computed efficiently.

\section{Numerical experiments} \label{sec5}

In this section, we report numerical performance of Algorithm \ref{dogleg1} for solving the SNIEP (\ref{RNEWEQ1}).
To show the efficiency of the proposed preconditioner, we compare Algorithm {\rm \ref{dogleg1}} with
the Riemannian inexact Newton method (RIN) \cite{ZBJ18}. All numerical tests are obtained
using {\tt MATLAB} R2020a on a linux server (20-core, Intel(R) Xeon (R) Gold 6230 @ 2.10 GHz, 32 GB RAM).

To determine $\Delta x_k \in \Gamma^{DL}_k$ such that $\min\{\delta_{\min},\|\Delta x_k^{IN}\|\} \leq \|\Delta x_k\| \leq \delta_k$
in Steps 2 and 3 of Algorithm \ref{dogleg1}, the  following traditional strategy is used.

\begin{procedure} {\rm (Determination of $\Delta x_k$)}
\begin{description}
\item if $\|\Delta x_k^{IN}\|\leq\delta_k$  then set $\Delta x_k:=\Delta x_k^{IN}$.
\item else if $\|\widehat{\Delta x}_k^{CP}\|\geq\delta_k$ then set $\Delta x_k:=\frac{\delta_k}{\|\widehat{\Delta x}_k^{CP}\|}\widehat{\Delta x}_k^{CP}$,
\item else set $\Delta x_k:=(1-\gamma)\widehat{\Delta x}_k^{CP}+\gamma\Delta x_k^{IN}$  for $\gamma \in (0,1)$ such that  $\|\Delta x_k\| = \delta_k$.
\item endif
\end{description}
\end{procedure}

For the determination of $\delta_{k+1}$ in Step 4 of Algorithm \ref{dogleg1},
we make use of the following special strategy \cite[p.2126]{PS08}.
\begin{procedure} {\rm (Determination of $\delta_{k+1}$)}
\begin{description}
\item if $\frac{{\rm Ared}_{k}(\Delta x_k)}{{\rm Pred}_{k}(\Delta x_k)}< \rho_s$ then
\begin{description}
\item   if $\|\Delta x_k^{IN}\|<\delta_k$ then set $\delta_{k+1}:=\max\{ \|\Delta x_k^{IN}\|, \delta_{\min}\}$,
\item else then set $\delta_{k+1}:=\max\{ \beta_s \delta_k, \delta_{\min}\}$.
\end{description}
\item else  $\frac{{\rm Ared}_{k}(\Delta x_k)}{{\rm Pred}_{k}(\Delta x_k)}\ge \rho_s$ then
\begin{description}
\item if $\frac{{\rm Ared}_{k}(\Delta x_k)}{{\rm Pred}_{k}(\Delta x_k)} >  \rho_e$ and $\|\Delta x_k\| = \|\delta_k\|$ then set $\delta_{k+1}:=\min\{ \beta_e \delta_k, \delta_{\max}\}$.
\end{description}
\end{description}
\end{procedure}

In our numerical tests, we set $t = 10^{-4}$,  $\sigma_{\max}=10^{-6}$, $\theta_{\min}=0.1$, $\theta_{\max}=0.9$,
$\delta_{\min} = 10^{-8}$, $\delta_{\max} = 10^{10}$, $\rho_s= 0.1$, $\rho_e = 0.75$,
$\beta_s= 0.25$ and $\beta_e = 4.0$. In addition, we set $\theta_k = 0.25$, and $\bar{\eta}_k = \frac{1}{k+10}$ for all $k\ge 0$.
The initial value of $\delta_0$ is set as follows: If $\|\Delta x_0^{IN}\| < \delta_{\min}$, set $\delta_0 = 2\delta_{\min}$;
else $\delta_0 = \|\Delta x_0^{IN}\|$.
The parameters for the RIN are set as in \cite{ZBJ18}.
The stopping criteria for Algorithm {\rm \ref{dogleg1}} and the RIN for solving the SNIEP (\ref{RNEWEQ1}) are set to be
\[
\|\Phi(S_k,Q_k)\|_F \leq 5.0\times 10^{-10}.
\]

For Algorithm {\rm \ref{dogleg1}} and the RIN, we solve \eqref{NE:SNIEP} via the CG method
and preconditioned CG (PCG) method with the preconditioner $M_k$ defined in \eqref{prec:mk}.
The largest number of outer iterations is set to be 100 and the largest number of
inner CG iterations is set to be $n^2$.

In our numerical tests,  `{\tt CT.}', {\tt IT.}', `{\tt NF.}',  `{\tt NCG.}', and `{\tt Res.}' mean
the  total computing time in seconds, the number of outer iterations, the number of function evaluations,
the  number of inner CG iterations,  the residual $\|\Phi(S_k,Q_k)\|_F$
at the final iterates of the corresponding algorithms, accordingly.
In addition,  `{\tt Res0.}' denotes the  residual $\|\Phi(S_0,Q_0)\|_F$
at the initial iterates of the corresponding algorithms.

We first consider the following small example.
\begin{example}\label{ex:3}
We consider the SNIEP with the spectrum $\{5,0,-2,-2\}$ {\rm \cite{CD91,S83}}. We report our numerical results for
different starting points {\rm(}which are generated by  the {\tt MATLAB} built-in functions {\tt rand} and {\tt orth}{\rm)}{\rm :}
{\rm(a)} $S_0=(B + B')/2$ with $B = {\tt rand}\,(n,n)$ and $Q_0={\tt orth}({\tt rand}(4,4))$, {\rm (b)} $S_0=(B + B')/2$ with $B = 5*{\tt rand}\,(4,4)$ and $Q_0={\tt orth}(5*{\tt rand}(4,4))$, and {\rm (c)}  $S_0=(B + B')/2$ with $B = 10*{\tt rand}\,(4,4)$ and $Q_0={\tt orth}(10*{\tt rand}(4,4))$.
\end{example}

We apply the RIN and  Algorithm  \ref{dogleg1} to Example \ref{ex:3}. The computed solution to the SNIEP via Algorithm  \ref{dogleg1} with PCG is as follows: For Case (a),
\[
\overline{C}=\left[
\begin{array}{cccc}
    0.6347  &  1.8878  &  2.2597  &  1.6700 \\
    1.8878  &  0.2945  &  1.3510  &  0.2270 \\
    2.2597  &  1.3510  &  0.0144  &  1.7082 \\
    1.6700  &  0.2270  &  1.7082  &  0.0565
\end{array}
\right];
\]
for Case (b),
\[
\overline{C}=\left[
\begin{array}{cccc}
    0.4120  &  0.9163  &  1.3446  &  2.2396 \\
    0.9163  &  0.2899  &  2.1531  &  1.2448 \\
    1.3446  &  2.1531  &  0.1386  &  1.5818 \\
    2.2396  &  1.2448  &  1.5818  &  0.1595
\end{array}
\right];
\]
for Case (c),
\[
\overline{C}=\left[
\begin{array}{cccc}
    0.0951  &  1.2360  &  2.0772  &  1.9702 \\
    1.2360  &  0.6101  &  0.6151  &  1.7514 \\
    2.0772  &  0.6151  &  0.2576  &  1.7623 \\
    1.9702  &  1.7514  &  1.7623  &  0.0373
\end{array}
\right].
\]

The numerical results for   Example \ref{ex:3} are given in Table \ref{table3}. We see from  Table \ref{table3} that both
the RIN and  Algorithm  \ref{dogleg1}  can find a solution to the SNIEP effectively.

\begin{table}[!h]\renewcommand{\arraystretch}{1.1} \addtolength{\tabcolsep}{1pt}
  \caption{Numerical results of Example \ref{ex:3}.}\label{table3}
  \begin{center} {\scriptsize
   \begin{tabular}[c]{|c|c|c|c|c|c|c|c|}
     \hline
     \multicolumn{8}{|c|}{ Example \ref{ex:3}} \\ \hline
Alg.     & Case      & {\tt CT.}     & {\tt IT.}    & {\tt NF.} & {\tt NCG.} &  {\tt Res0.} &  {\tt Res.} \\ \hline
RIN      & (a)       &   0.0013 s    &   6  &   8   &   7  &   4.8290  &  $9.87\times 10^{-13}$ \\
with     & (b)       &   0.0031 s    &   6  &   7   &   7  &   26.456  &  $4.04\times 10^{-12}$ \\
CG       & (c)       &   0.0031 s    &   8  &   9   &   6  &   175.37  &  $2.04\times 10^{-13}$ \\
             \cline{2-7}\hline
RIN      & (a)       &   0.0013 s    &   6  &   8   &   5  &  4.8290   &  $7.40\times 10^{-12}$\\
with     & (b)       &   0.0027 s    &   7  &   8   &   6  &  26.456   &  $1.80\times 10^{-15}$ \\
PCG      & (c)       &   0.0030 s    &   9  &  10   &   5  &  175.37   &  $4.65\times 10^{-15}$ \\
             \cline{2-7}\hline
Alg. 2.1 & (a)       &   0.0013 s    &   7  &   9   &   8  &   4.8290  &   $2.65\times 10^{-15}$\\
with     & (b)       &   0.0115 s    &   6  &   7   &   7  &   26.456  &   $5.94\times 10^{-12}$ \\
CG       & (c)       &   0.0048 s    &   8  &   9   &   6  &   175.37  &   $6.16\times 10^{-13}$ \\
             \cline{2-7}\hline
Alg. 2.1 & (a)       &   0.0012 s    &   6  &   8   &   5  &  4.8290   &   $6.15\times 10^{-13}$\\
with     & (b)       &   0.0051 s    &   6  &   7   &   5  &  26.456   &   $7.54\times 10^{-11}$ \\
PCG      & (c)       &   0.0044 s    &   8  &   9   &   5  &  175.37   &   $2.84\times 10^{-13}$ \\
             \cline{2-7}\hline
\end{tabular} }
  \end{center}
\end{table}

Next, we consider the SNIEP with arbitrary prescribed eigenvalues.
\begin{example}\label{ex:1}
We consider the SNIEP with arbitrary prescribed eigenvalues. Let $\widehat{C}$ be an $n\times n$ random symmetric nonnegative matrix generated by
the {\tt MATLAB} built-in functions {\tt randn} and {\tt abs}{\rm :}
\[
\widehat{C} = (\widetilde{C} + \widetilde{C}^T)/2\quad\mbox{with}\quad \widetilde{C} = {\tt abs}({\tt randn} (n,n)).
\]
We use the eigenvalues of $\widehat{C}$ as the prescribed spectrum.
The starting point $(S_0, Q_0)$ is generated as follows{\rm :}
\[
B = {\tt rand}\,(n,n), \quad C_0 = (B + B')/2, \quad
S_0 = {\tt sqrt}\,(C_0), \quad [ Q_0, \widetilde{\Lambda}] = \mbox{\tt eig}\,(C_0).
\]
\end{example}
\begin{example}\label{ex:2}
We consider the SNIEP with multiple zero eigenvalues. Let $\widehat{C}=XX^T$, where $X\in\R^{n\times p}$ is a random nonnegative matrix generated by the {\tt MATLAB} built-in function {\tt rand}.  We use the eigenvalues of $\widehat{C}$ as the prescribed spectrum. We choose the starting point $(S_0, Q_0)$ as follows{\rm :}
\[
B = {\tt rand}\,(n,p), \quad C_0 = B*B', \quad S_0 = {\tt sqrt}\,(C_0), \quad [ Q_0, \widetilde{\Lambda}] = \mbox{\tt eig}\,(C_0).
\]
\end{example}

Tables \ref{table1}--\ref{table2}  list numerical results for Examples \ref{ex:1} and \ref{ex:2}, respectively.
We observe from Tables \ref{table1}--\ref{table2} that both Algorithm {\rm \ref{dogleg1}} and the RIN are globally convergent.
In particular, the constructed preconditioner $M_k$ can improve the performances of these algorithms efficiently in terms of the computing time and the number of inner CG iterations.

\begin{table}[!h]\renewcommand{\arraystretch}{1.1} \addtolength{\tabcolsep}{1pt}
  \caption{Numerical results of Example \ref{ex:1}.}\label{table1}
  \begin{center} {\small
   \begin{tabular}[c]{|c|c|c|c|c|c|c|c|}
     \hline
Alg.     & $n$         & {\tt CT.}     & {\tt IT.}    & {\tt NF.} & {\tt NCG.} &  {\tt Res0.} &  {\tt Res.} \\ \hline
RIN      & $100$       &   0.2499 s    &   7  &   8   &  112 &   40.306  &  $2.08\times 10^{-13}$ \\
with     & $200$       &   0.7878 s    &   7  &   8   &  148 &   78.387  &  $3.90\times 10^{-13}$ \\
CG       & $500$       &   3.8080 s    &   7  &   8   &  192 &   194.82  &  $5.24\times 10^{-12}$ \\
         & $1000$      &   36.364 s    &   8  &   9   &  332 &   388.30  &  $2.92\times 10^{-12}$ \\
         & $2000$      &  04 m 28 s    &   8  &   9   &  402 &   788.42  &  $7.17\times 10^{-12}$ \\
         & $5000$      &01 h 18 m 22 s &   9  &  10   &  594 &   1945.0  &  $2.15\times 10^{-11}$ \\
             \cline{2-7}\hline
RIN      & $100$       &   0.0191 s    &   6  &   7   &   5  &  40.306   &  $3.99\times 10^{-12}$\\
with     & $200$       &   0.0555 s    &   6  &   7   &   6  &  78.387   &  $4.30\times 10^{-13}$ \\
PCG      & $500$       &   0.3356 s    &   6  &   7   &   5  &  194.82   &  $1.67\times 10^{-12}$ \\
         & $1000$      &   1.6659 s    &   7  &   8   &   5  &  388.30   &  $2.89\times 10^{-12}$ \\
         & $2000$      &   8.5646 s    &   7  &   8   &   5  &  788.42   &  $6.82\times 10^{-12}$ \\
         & $5000$      &  01 m 19 s    &   7  &   8   &   4  &  1945.0   &  $2.17\times 10^{-11}$ \\
             \cline{2-7}\hline
Alg. 2.1 & $100$       &   0.1588 s    &   6  &   7   &   84 &   40.306  &   $5.60\times 10^{-11}$\\
with     & $200$       &   0.8950 s    &   7  &   8   &  164 &   78.387  &   $3.97\times 10^{-13}$ \\
CG       & $500$       &   4.3912 s    &   7  &   8   &  219 &   194.82  &   $1.19\times 10^{-12}$ \\
         & $1000$      &   27.093 s    &   7  &   8   &  276 &   388.30  &   $6.83\times 10^{-11}$ \\
         & $2000$      &  05 m 02 s    &   8  &   9   &  447 &   788.42  &   $7.01\times 10^{-12}$ \\
         & $5000$      &01 h 37 m 45 s &   9  &  10   &  725 &   1945.0  &   $2.17\times 10^{-11}$ \\
             \cline{2-7}\hline
Alg. 2.1 & $100$       &   0.0278 s    &   6  &   7   &   5  &  40.306   &   $6.26\times 10^{-13}$\\
with     & $200$       &   0.0572 s    &   6  &   7   &   6  &  78.387   &   $3.48\times 10^{-13}$ \\
PCG      & $500$       &   0.3084 s    &   6  &   7   &   5  &  194.82   &   $1.58\times 10^{-12}$ \\
         & $1000$      &   1.8318 s    &   7  &   8   &   5  &  388.30   &   $2.88\times 10^{-12}$ \\
         & $2000$      &   9.9037 s    &   7  &   8   &   5  &  788.42   &   $6.82\times 10^{-12}$ \\
         & $5000$      &  01 m 27 s    &   7  &   8   &   4  &  1945.0   &   $5.62\times 10^{-11}$ \\
             \cline{2-7}\hline
\end{tabular} }
  \end{center}
\end{table}
\begin{table}[!h]\renewcommand{\arraystretch}{1.1} \addtolength{\tabcolsep}{1pt}
  \caption{Numerical results of Example \ref{ex:2}.}\label{table2}
  \begin{center} {\small
   \begin{tabular}[c]{|c|c|c|c|c|c|c|c|c|}
     \hline
Alg.     & $n$   & $p$     & {\tt CT.}     & {\tt IT.}    & {\tt NF.} & {\tt NCG.} &  {\tt Res0.} &  {\tt Res.} \\ \hline
RIN     & $100$   & $25$  &   0.0697 s    &  6  &  7   &  33 &   49.526  &  $1.31\times 10^{-12}$ \\
with    & $200$   & $50$  &   0.2519 s   &  6  &  7   &  50 &   43.667  &  $6.59\times 10^{-12}$ \\
CG     & $500$   &$125$ &   1.7630 s    &  7  &  8  &  75 &    185.69  &  $4.63\times 10^{-11}$ \\
         & $1000$  &$250$ &   10.981 s    &   6  &  7   &  116 &  25.570  &  $2.12\times 10^{-10}$ \\
         & $2000$  &$500$ &  01 m 05 s    &   6  &   7   &  125 &   111.58  &  $1.07\times 10^{-9}$ \\
         & $5000$  &$1250$ & 18 m 51 s &   6  &  7   &  210 &   77.947  &  $8.23\times 10^{-9}$ \\
             \cline{2-8}\hline
RIN      & $100$  & $25$  &   0.0195 s    &   5  &  6   &   5  &  49.526  &  $1.24\times 10^{-12}$\\
with     & $200$  & $50$ &   0.0426 s    &   5  &  6   &   5  &  43.667   &  $6.52\times 10^{-12}$ \\
PCG     & $500$ &$125$ &   0.2884 s    &   6  &  7  &    4  &  185.69  &  $3.86\times 10^{-11}$ \\
         & $1000$    &$250$ &   0.9763 s    &  5  &   6   &   4  &  25.570   &  $2.24\times 10^{-10}$ \\
         & $2000$   &$500$ &   4.5024 s    &   5  &   6   &   3  &  111.58   &  $1.04\times 10^{-9}$ \\
         & $5000$   &$1250$ &  53.165 s    &   5  &   6   &   3  &  77.947   &  $8.42\times 10^{-9}$ \\
             \cline{2-8}\hline
Alg. 2.1 & $100$  & $25$  &   0.0753 s    &  6  &  7   &   33 &   49.526  &   $1.24\times 10^{-12}$\\
with     & $200$  & $50$  &   0.2867 s   &  6  &  7   &   55 &   43.667   &   $6.64\times 10^{-12}$ \\
CG      & $500$ &$125$ &    1.9218 s    &  7  &  8   &     81 &   185.69  &   $4.38\times 10^{-11}$ \\
         & $1000$   &$250$ &   11.800 s    &   6  &   7   &  123 &   25.570  &   $2.12\times 10^{-10}$ \\
         & $2000$  &$500$ &  01 m 12 s    &   6  &   7   &  132 &   111.58  &   $1.00\times 10^{-9}$ \\
         & $5000$   &$1250$ & 19 m 51 s &   6  &  7   &  218 &   77.947  &   $8.05\times 10^{-9}$ \\
             \cline{2-8}\hline
Alg. 2.1 & $100$   & $25$  &   0.0208 s    &   5  &  6   &   5  &  49.526  &   $1.18\times 10^{-12}$\\
with     & $200$   & $50$  &   0.0464 s    &  5  &  6  &   5  &  43.667   &   $5.80\times 10^{-12}$ \\
PCG    & $500$   &$125$ &   0.3198 s    &   6  &   7   &   4  &  185.69 &   $4.49\times 10^{-11}$ \\
         & $1000$  &$250$ &   1.1114 s    &   5  &   6   &   4  &  25.570   &   $2.13\times 10^{-10}$ \\
         & $2000$  &$500$ &   4.9578 s    &   5  &   6   &   3  &  111.58   &   $1.10\times 10^{-9}$ \\
         & $5000$  &$1250$ &  01 m 01 s    &   5  &   6   &   3  &  77.947   &   $8.52\times 10^{-9}$ \\
             \cline{2-8}\hline
\end{tabular} }
  \end{center}
\end{table}

To illustrate the quadratic convergence of Algorithm {\rm \ref{dogleg1}}, we give the convergence trajectory
for two tests of Example \ref{ex:1} with $n=200$ and $n=1000$.
Figure \ref{f1} depicts the logarithm of the residual versus the number of iterations of
Algorithm \ref{dogleg1} and the RIN. We observe from Figure \ref{f1} that both Algorithm \ref{dogleg1} and the RIN converge quadratically, which confirms our theoretical results.

To further illustrate the efficiency of the preconditioner, we give the condition number and the spectrum of the matrices
$\widehat{H}_k$ and $\widehat{M}_k^{-1}\widehat{H}_k$ at the final iterates generated by
Algorithm {\rm \ref{dogleg1}} and the RIN for one test of Example \ref{ex:1} with $n=100$.
For the RIN, the condition numbers of $\widehat{H}_k$ and $\widehat{M}_k^{-1}\widehat{H}_k$
are  $5.01\times 10^3$ and $4.0665$, respectively, while, for Algorithm {\rm \ref{dogleg1}}, the condition numbers of $\widehat{H}_k$ and $\widehat{M}_k^{-1}\widehat{H}_k$
are  $5.01\times 10^3$ and $4.0658$, respectively.
Thus the preconditioner $\widehat{M}_k$ can reduce the condition number of $\widehat{H}_k$ efficiently.
From Figure \ref{f2}, we observe that the eigenvalues of $\widehat{H}_k$ are scattered in the interval $(0,8000)$,
while the eigenvalues of $\widehat{M}_k^{-1}\widehat{H}_k$ are clustered around 1. This shows the effectiveness of the constructed preconditioner.

\begin{figure}[!h]
\begin{center}
\begin{tabular}{cc}
\epsfig{figure=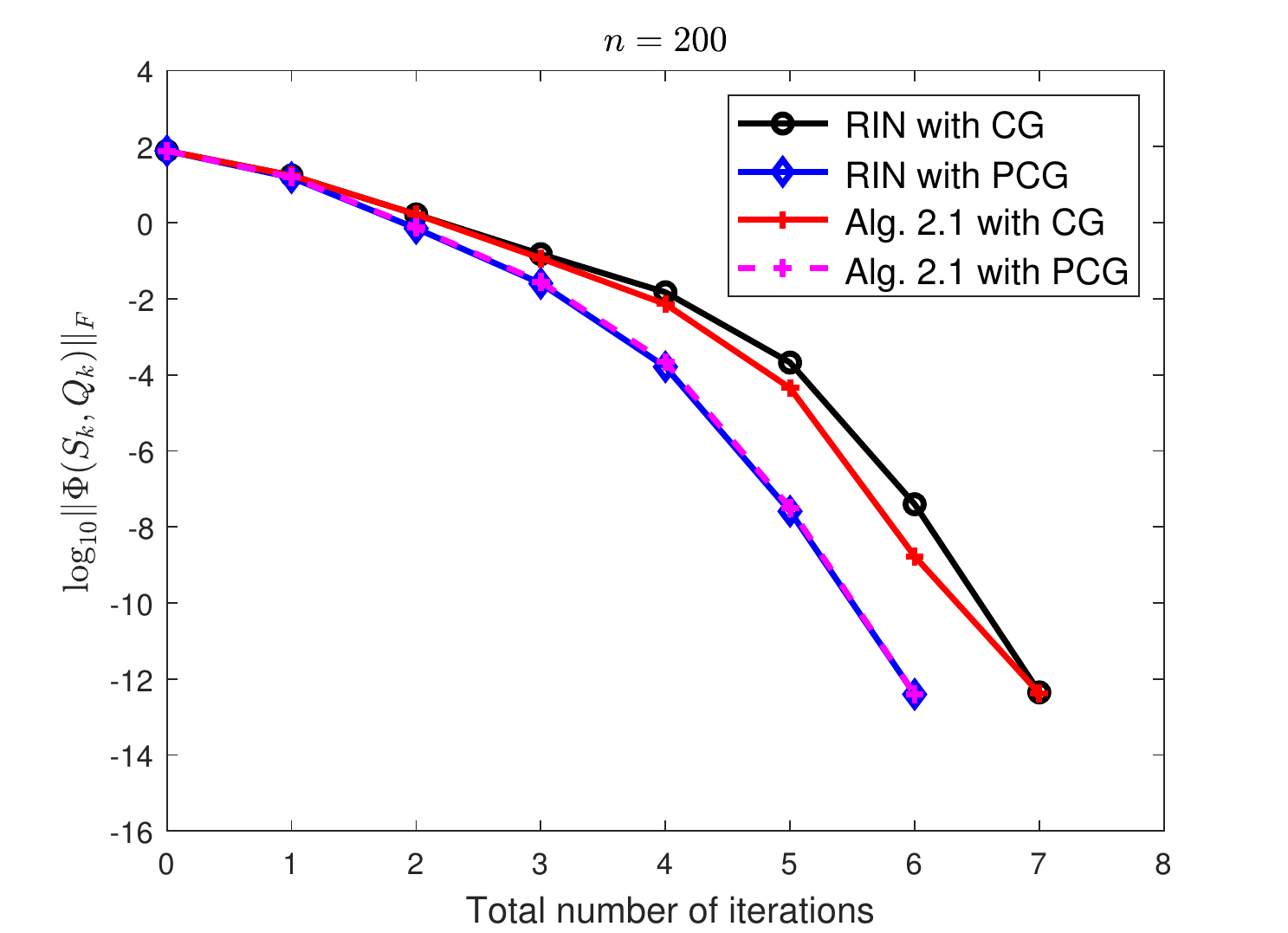,width=7.0cm} & \epsfig{figure=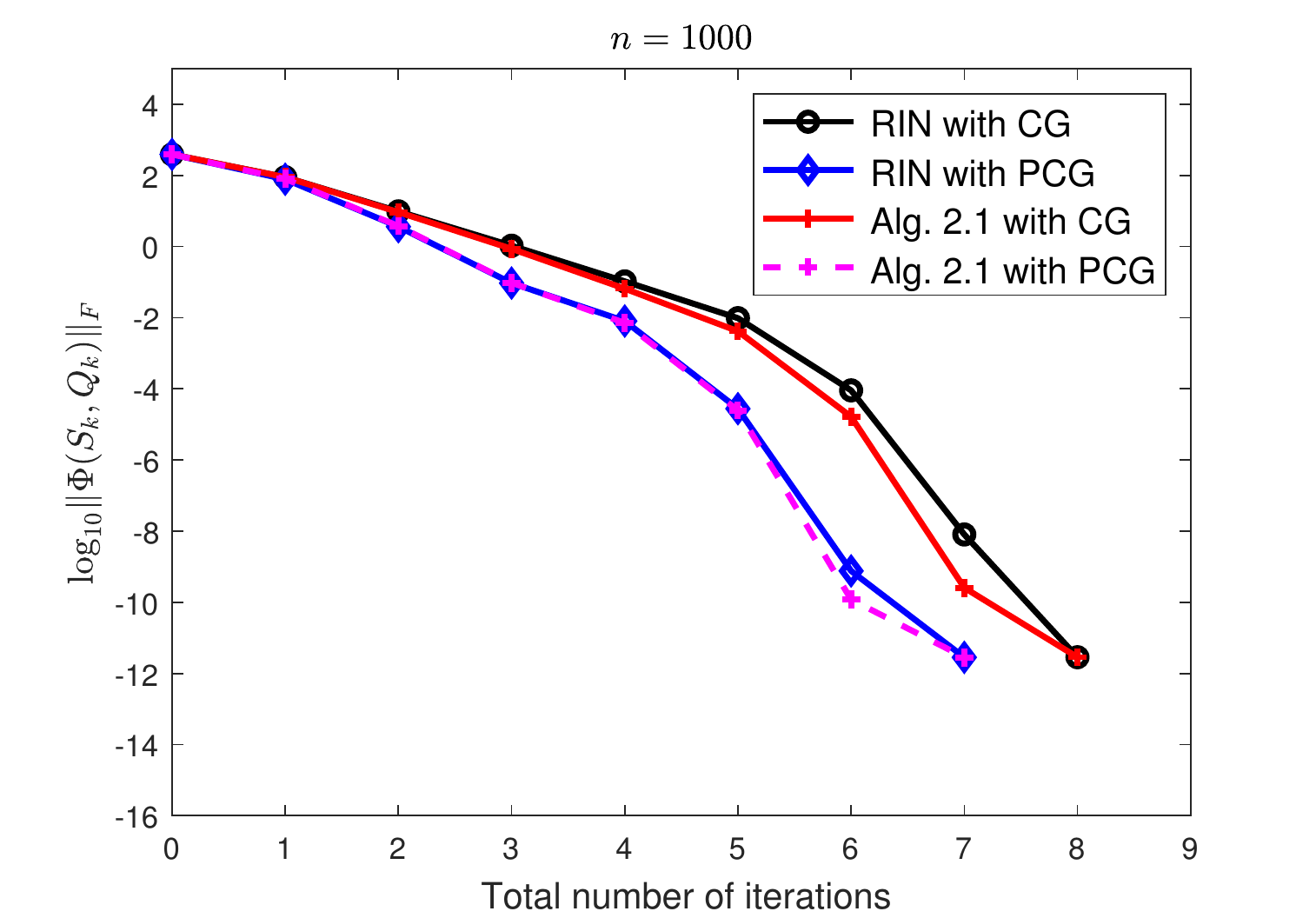,width=7.0cm} \\
\end{tabular}
\end{center}
 \caption{Convergence history of two tests for Example \ref{ex:1}.}
 \label{f1}
\end{figure}

\begin{figure}[!ht]
\begin{center}
\begin{tabular}{cc}
\epsfig{figure=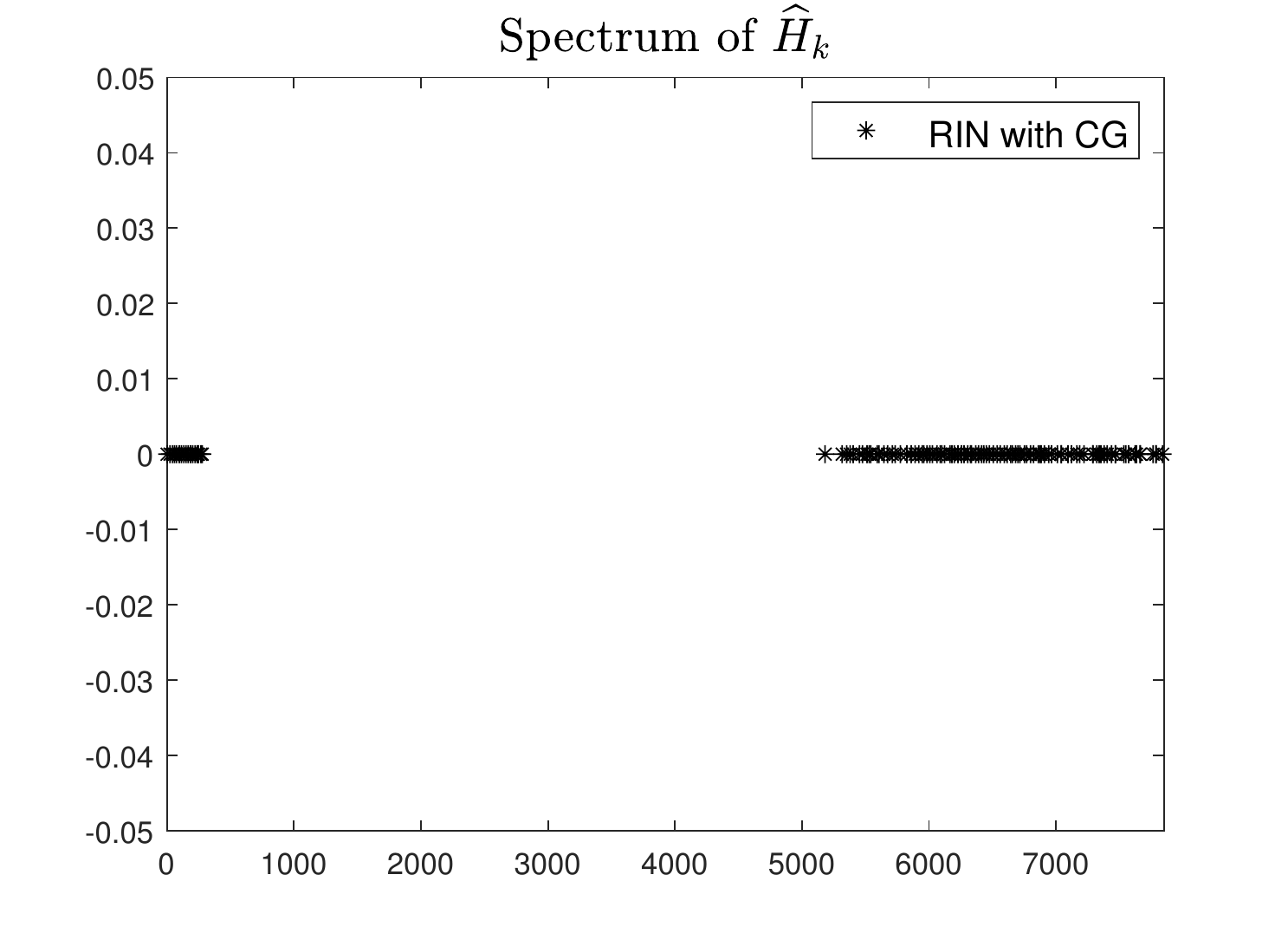,width=7.0cm} & \epsfig{figure=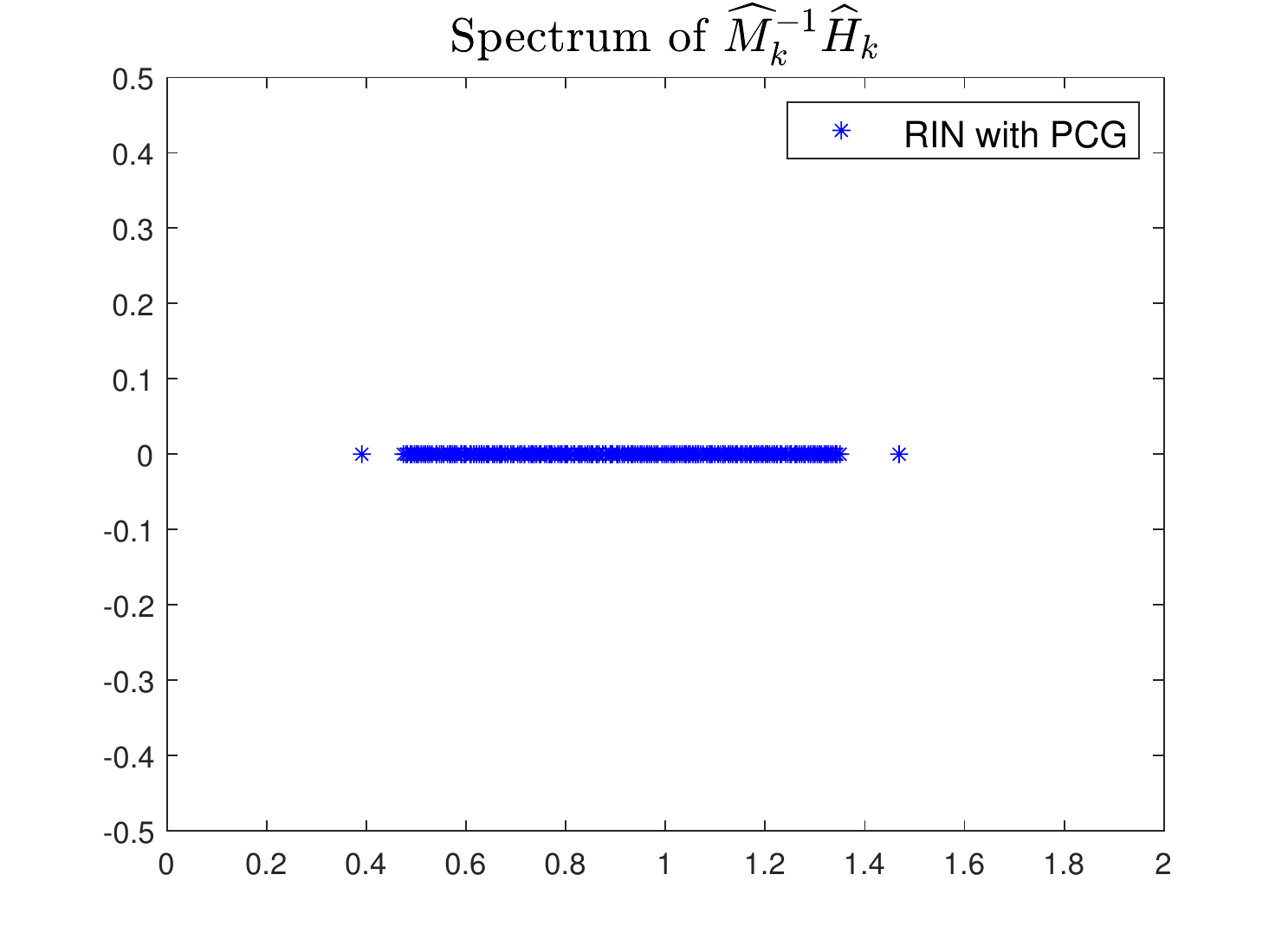,width=7.0cm} \\
\epsfig{figure=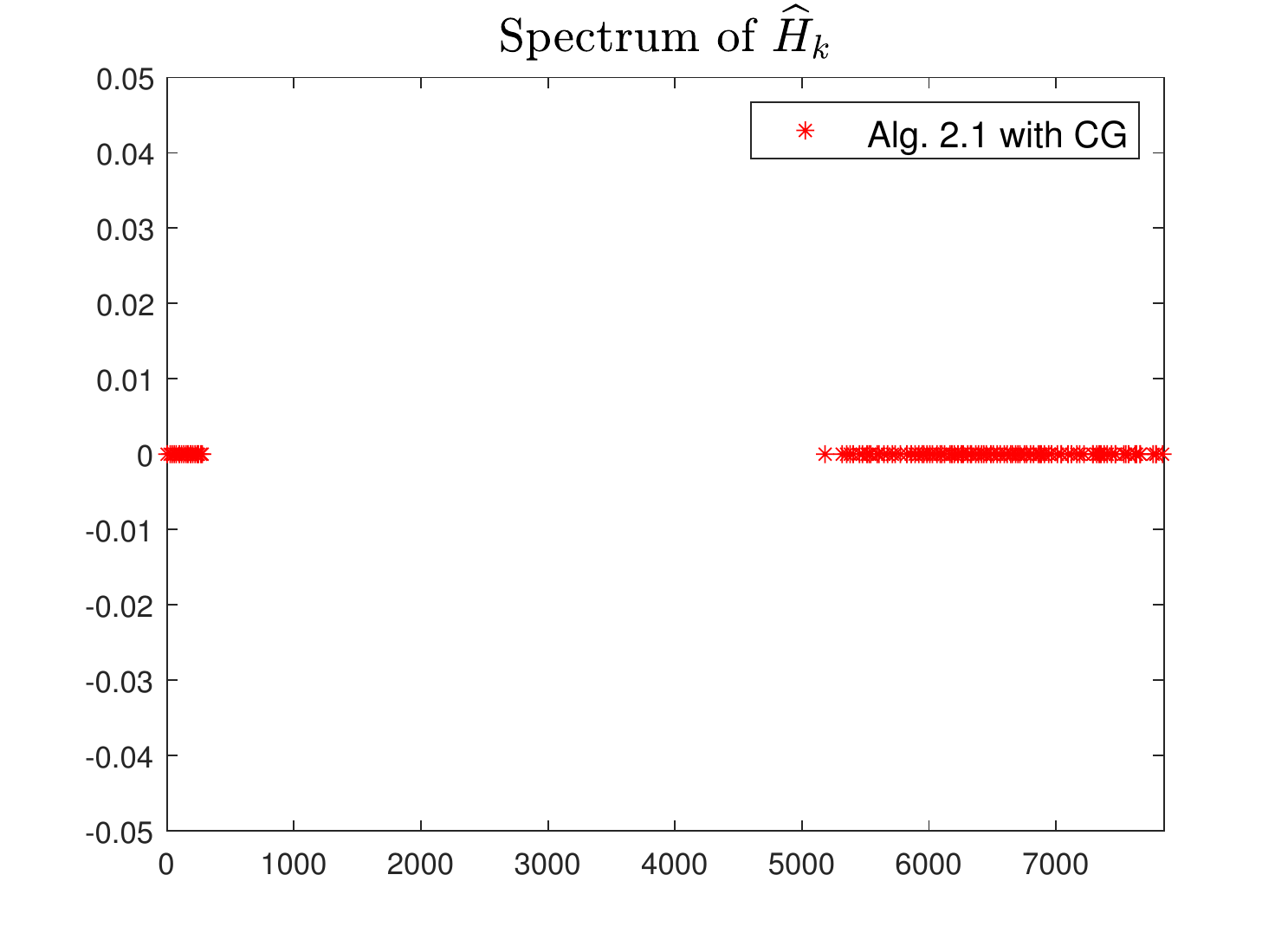,width=7.0cm} & \epsfig{figure=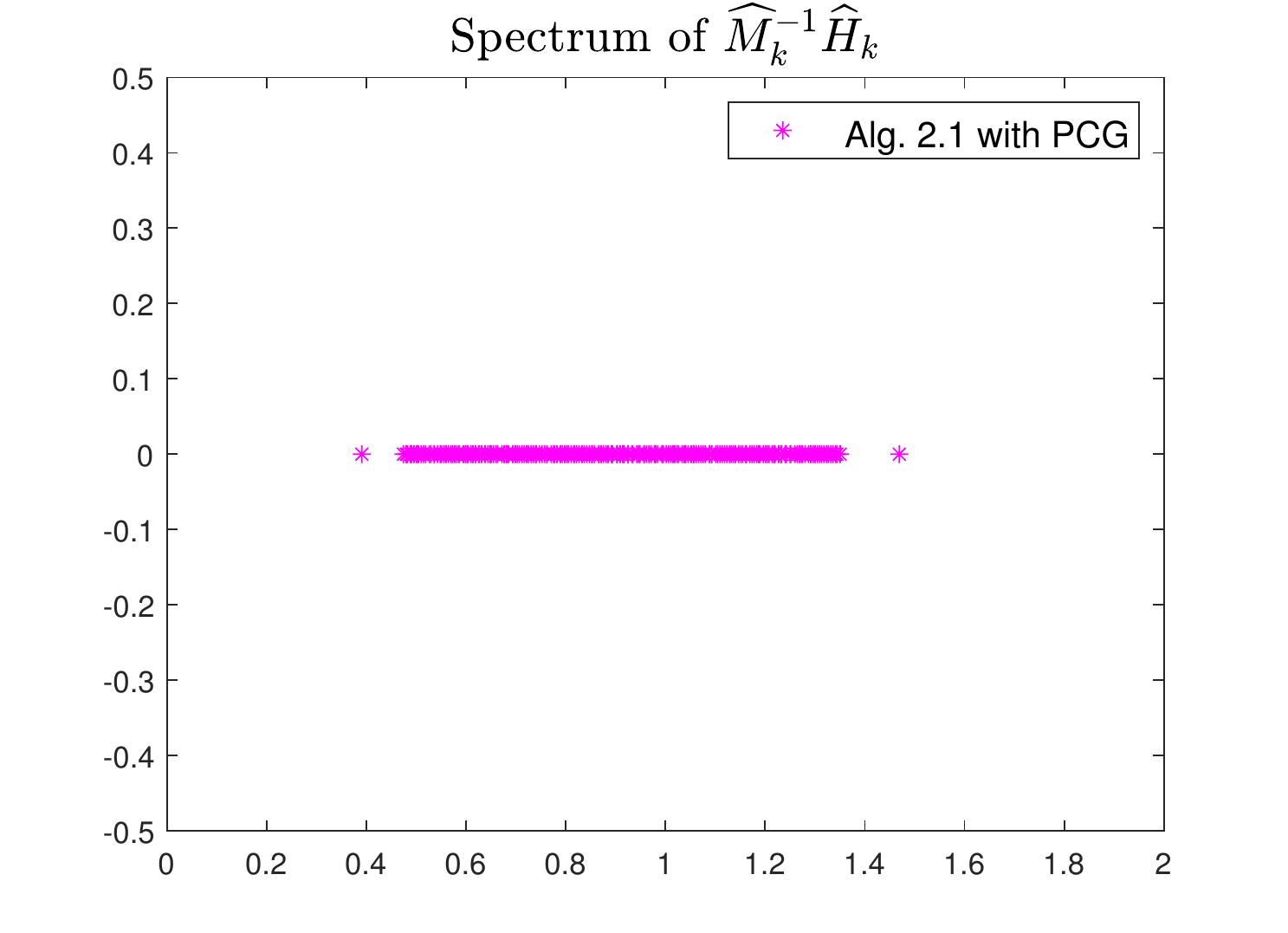,width=7.0cm}
\end{tabular}
\end{center}
 \caption{Spectrum of $\widehat{H}_k$ and $\widehat{M}_k^{-1}\widehat{H}_k$ at final iterates of one test for $n=100$.}
 \label{f2}
\end{figure}

\section{Concluding remarks} \label{sec6}

In this paper, we consider the problem of reconstructing a symmetric nonnegative matrix
from prescribed realizable spectrum. The inverse problem is reformulated as an underdetermined nonlinear matrix equation over a Riemannian product manifold. To solve the inverse problem, we develop a Riemannian underdetermined Newton dogleg method
for finding a solution to  a general underdetermined nonlinear equation defined between Riemannian manifold and Euclidean space.
Under some mild assumptions, we show the proposed method converges globally  and quadratically.
Then we apply he proposed method to inverse problem by constructing an efficient preconditioner.
Numerical results show the efficiency of the proposed method.
In the future research, we will discuss how to construct an effective preconditioned
numerical method for solving the inverse eigenvalue problem for nonsymmetric nonnegative matrices.




\end{document}